\newtheorem{theorem}{Theorem}[section]
\newtheorem{lemma}[theorem]{Lemma}
\newtheorem{proposition}[theorem]{Proposition}
\newtheorem{corollary}[theorem]{Corollary}
\theoremstyle{definition}
\theoremstyle{remark}
\newtheorem{remark}{Remark}[section]
\begin{document}

\title{Littlewood-Offord problems for the Curie-Weiss models}

\author{Yinshan Chang\thanks{Address: College of Mathematics, Sichuan University, Chengdu 610065, China; Email: ychang@scu.edu.cn}, Xue Peng\thanks{Address: College of Mathematics, Sichuan University, Chengdu 610065, China; Email: pengxuemath@scu.edu.cn}}

\date{}

\maketitle

\begin{abstract}
 In this paper, we consider the Littlewood-Offord problems in one dimension for the Curie-Weiss models. Let
 \[Q_n^{+}:=\sup_{x\in\mathbb{R}}\sup_{v_1,v_2,\ldots,v_n\geq 1}P(\sum_{i=1}^{n}v_i\varepsilon_i\in(x-1,x+1)),\]
 \[Q_n=\sup_{x\in\mathbb{R}}\sup_{|v_1|,|v_2|,\ldots,|v_n|\geq 1}P(\sum_{i=1}^{n}v_i\varepsilon_i\in(x-1,x+1))\]
 where the random variables $(\varepsilon_i)_{1\leq i\leq n}$ are spins in Curie-Weiss models. We calculate the asymptotic properties  of $Q_n^{+}$ and $Q_n$ as $n\to\infty$ and observe the phenomena of phase transitions. Meanwhile,  we also  get  that $Q_n^{+}$ is attained when $v_1=v_2=\cdots=v_n=1$. And  $Q_n$ is attained when one half of $(v_i)_{1\leq i\leq n}$ equals to $1$ and the other half equals to $-1$ when $n$ is even.This is a generalization of classical Littlewood-Offord problems from Rademacher random variables to possibly dependent random variables. In particular, it includes the case of general independent and identically distributed Bernoulli random variables.
\end{abstract}

\section{Introduction}
In this paper, we consider the Littlewood-Offord problems in one dimension for the Curie-Weiss models. To be more precise, we are interested in the asymptotic properties of the following upper bounds
\begin{equation}\label{eq: def Qn}
  Q_n^{+}:=\sup_{x\in\mathbb{R}}\sup_{v_1,v_2,\ldots,v_n\geq 1}P(\sum_{i=1}^{n}v_i\varepsilon_i\in(x-1,x+1)),
\end{equation}
 and
 \begin{equation}\label{eq: def Qn positive}
  Q_n=\sup_{x\in\mathbb{R}}\sup_{|v_1|,|v_2|,\ldots,|v_n|\geq 1}P(\sum_{i=1}^{n}v_i\varepsilon_i\in(x-1,x+1))
 \end{equation}
as $n\to +\infty$, where the random variables $(\varepsilon_i)_{1\leq i\leq n}$ are spins in Curie-Weiss models. Meanwhile, we are also interested in studying that for which  $(v_i)_{1\leq i\leq n}$ and $x$,  $Q^+_n$ and $Q_n$ can be attained. In the following, we call the above two problems about $Q^+_n$ the positive Littlewood-Offord problem and the above two problems about $Q_n$ the Littlewood-Offord problem. And the Curie-Weiss models are spin models from statistic physics. It is a model on complete graphs of $n$ vertices. Each vertex $i$ is associated with a spin $\varepsilon_i\in\{-1,1\}$. We write $\varepsilon=(\varepsilon_1,\varepsilon_2,\ldots,\varepsilon_n)$. For a configuration  $\sigma=(\sigma_1,\sigma_2,\ldots,\sigma_n)\in\{-1,1\}^n$, the probability of realizing a configuration $\sigma$ is
\begin{equation}\label{defn: Curie-Weiss}
P(\varepsilon=\sigma)=\frac{1}{Z_{d,\beta,h}}\exp\left\{\frac{d\beta}{n}(\sum_{j=1}^{n}\sigma_j)^2+h\sum_{j=1}^{n}\sigma_{j}\right\},
\end{equation}
where the positive integer $d$, non-negative real number $\beta$ and real number $h$ are three parameters, and $Z_{d,\beta,h}$ is just the normalizing constant. Usually, $d$ is the dimension, $\beta$ is the inverse temperature and $h$ is the strength of the external field. When $h=0$, we call it the Curie-Weiss model without external field. The Curie-Weiss models without external field have  phase transitions. The critical parameter is
\[\beta_c=\frac{1}{2d}.\]
For $h=0$ and $\beta<\beta_c$, it is called high temperature case. For $h=0$ and $\beta=\beta_c$, it is called critical case. For $h=0$ and $\beta>\beta_c$, it is called low temperature case. When $h\neq 0$, we call it the Curie-Weiss model with non-zero external field. We refer to \cite[Chapter~2]{FriedliVelenikMR3752129} for more details on the Curie-Weiss models. Because $(\varepsilon_i)_{1\leq i\leq n}$ in the Curie-Weiss models are usually not symmetric, the quantities of $Q^+_n$ and $Q_n$ are not the same in general. Thus, in this paper we want to study the  asymptotic properties of $Q^+_n$ and $Q_n$ as $n\to +\infty$ under  high temperature case,  critical case,  low temperature case and with non-zero external field case.

The classical Littlewood-Offord problem concerns the bound $Q_n$ when $(\varepsilon_i)_{1\leq i\leq n}$ are Rademacher random variables. Because the Rademacher random variables are symmetric, $Q^+_n$ and  $Q_n$ are the same.  The first result is due to Littlewood and Offord \cite{LittlewoodOffordMR0009656}. This is the origin of the name of this problem. But their result is not very sharp and contains logarithmic corrections. The first sharp result is given by Erd\H{o}s \cite{ErdosMR0014608} via combinatorial arguments. Erd\H{o}s proved that for Rademacher random variables $(\varepsilon_i)_{1\leq i\leq n}$, it holds that $Q_n=\binom{n}{[n/2]}2^{-n} \sim \sqrt{\frac{2}{\pi}}n^{-\frac{1}{2}}$ as $n\to +\infty$. In the same paper, Erd\H{o}s conjectured that the same sharp upper bound holds if we replace $v_1,v_2,\ldots,v_n$ by $n$ vectors in Hilbert space with norms $||v_i||\geq 1$ and replace $(x-1,x+1)$ by an arbitrary unit open ball $B$ with radius $1$. This long standing conjecture was settled by Kleitman \cite{KleitmanMR0265923}.  Kleitman obtains optimal results for the union of finitely many unit open balls, which is a multi-dimensional analog of Erd\H{o}s's result (\cite[Theorem~3]{ErdosMR0014608}) in dimension one. As far as we know, the paper \cite{TaoVuMR2965282} by Tao and Vu is the most recent development in a similar setup. We refer to the reference there for the series of work in high dimensions. Besides, if $(v_i)_{1\leq i\leq n}$ are different integers, $Q_n$ is of the order $n^{-3/2}$, which  is significantly smaller. We refer the readers to \cite{ErdosMR0174539}, \cite{NguyenMR2891377} and \cite{StanleyMR0578321}  for the related work. Beyond Rademacher series, some people also study $Q_n$ when $(\varepsilon_i)_{1\leq i\leq n}$ are general independent and identically distributed Bernoulli random variables, see \cite{JKMR4201801} and \cite{SinghalMR4440097}. Beyond the independence of $(\varepsilon_i)_{1\leq i\leq n}$, Rao \cite{RaoMR4294326} have studied $(\varepsilon_i)_{1\leq i\leq n}$ driven by Markov chains on a finite state space.

Inspired by the work of Rao \cite{RaoMR4294326}, we continue the study for dependent random variables $(\varepsilon_i)_{1\leq i\leq n}$. We choose to study the Curie-Weiss models. In particular, by \eqref{defn: Curie-Weiss}, when $\beta=0$, $(\varepsilon_i)_{1\leq i\leq n}$ is a sequence of independent and identically distributed Bernoulli random variables. So  by considering Littlewood-Offord problems for the Curie-Weiss models, we generalize the  classical Littlewood-Offord problems for the Rademacher random variables.

Our main results of this paper are the following.

First, we get the asymptotic expansion of the normalizing constant $Z_{d,\beta,h}$. The paper \cite{ShamisZeitouniMR3824953} provides the   asymptotic equivalence of $Z_{d,\beta,h}$. However,  to observe the phase transitions of $Q_n$, we need to compute the second term in the asymptotic expansion of $Z_{d,\beta,h}$ which is not provided in \cite{ShamisZeitouniMR3824953}. Hence, we need to get  the complete asymptotic expansion of $Z_{d,\beta,h}$ by ourselves.

\begin{theorem}\label{thm: normalizing constant expansion}
Let $Z_{d,\beta,h}$ be the normalizing constant of the Curie-Weiss models, i.e.
\begin{equation*}
    Z_{d,\beta,h}:=\sum_{\substack{\sigma=(\sigma_1,\sigma_2,\dots,\sigma_n)\\
    \in\{-1,1\}^n}}\exp\left\{\frac{d\beta}{n}(\sum_{j=1}^{n}\sigma_j)^2+h\sum_{j=1}^{n}\sigma_{j}\right\}.
\end{equation*}
Then, as $n\to+\infty$, the asymptotic expansion of $Z_{d,\beta,h}$ is as follows:
 \begin{enumerate}[(1)]
  \item For $h=0$, $\beta<\beta_c$ and $M\geq 1$, we have that
\[Z_{d,\beta,0}=2^{n}(\sum_{p=0}^{M}e_p^{<}n^{-p}+o(n^{-M})),\]
where $e_0^{<}=(1-2d\beta)^{-\frac{1}{2}}$ and for $p\geq 1$, $e_p^{<}$ is given by \eqref{eq: subcritical cp}.
   \item For $h=0$, $\beta=\beta_c$ and $M\geq 1$, we have that
\[Z_{d,\beta,0}=2^{n}(\sum_{p=0}^{M}e_p^{c}n^{-p/2+1/4}+o(n^{-M/2+1/4})),\]
where $e_0^{c}=\frac{1}{\sqrt{2\pi}}\left(\frac{3}{4}\right)^{\frac{1}{4}}\Gamma(\frac{1}{4})$ and for $p\geq 1$, $e_p^{c}$ is given by \eqref{eq: critical ep}. 
  \item For $h=0, \beta>\beta_c$ and $M\geq 1$, we have that
\[Z_{d,\beta,0}=2^ne^{n\varphi(t_*)}(\sum_{p=0}^{M}e_p^{>}n^{-p}+o(n^{-M})),\]
where $\varphi(t_*)=\ln\cosh t_{*}-\frac{t_{*}^2}{4d\beta}$, $e_0^{>}=2^{\frac{1}{2}}(d\beta)^{-\frac{1}{2}}\left(\frac{1}{2d\beta}-\frac{1}{\cosh^2 t_{*}}\right)^{-\frac{1}{2}}$,
and for $p\geq 1$, $e_p^{>}$ is given by \eqref{eq: hp}. Here, $z_{*}=\frac{t_{*}}{2d\beta}>0$ is the maximal solution of the mean-field equation $\tanh\left(\frac{\beta}{\beta_c}z\right)=z.$
  \item For  $h\neq 0$ and $M\geq 1$, we have that
\[Z_{d,\beta,h}=2^ne^{n\varphi(t_*)}(\sum_{p=0}^{M}e_p^{h\neq 0}n^{-p}+o(n^{-M})),\]
where $\varphi(t_*)=\ln\cosh t_{*}-\frac{(t_{*}-|h|)^2}{4d\beta}$,
$e_0^{h\neq 0}=(2d\beta)^{-\frac{1}{2}}\left(\frac{1}{2d\beta}-\frac{1}{\cosh^2 t_{*}}\right)^{-\frac{1}{2}},$
and for $p\geq 1$, $e_p^{h\neq 0}$ is given by \eqref{eq: h neq ep}. Here, $z_{*}=\frac{t_{*}-|h|}{2d\beta}>0$ is the maximal solution of the mean-field equation
$\tanh\left(\frac{\beta}{\beta_c}z+|h|\right)=z.$
\end{enumerate}
\end{theorem}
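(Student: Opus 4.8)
The plan is to linearize the quadratic interaction via the Hubbard--Stratonovich (Gaussian) transformation and then reduce everything to the Laplace asymptotics of a single one-dimensional integral. Assuming $\beta>0$ (the case $\beta=0$ giving $Z=2^n$ trivially), I would use the identity $\exp(\frac{d\beta}{n}S^2)=\sqrt{\frac{n}{4\pi d\beta}}\int_{\mathbb R}\exp(-\frac{n}{4d\beta}t^2+St)\,dt$ with $S=\sum_j\sigma_j$. Inserting this into the definition of $Z_{d,\beta,h}$ makes the sum over $\sigma$ factorize as $\sum_\sigma e^{(t+h)S}=(2\cosh(t+h))^n$, and after the shift $t\mapsto t-h$ one obtains
\[
Z_{d,\beta,h}=2^n\sqrt{\frac{n}{4\pi d\beta}}\int_{\mathbb R}e^{n\varphi(t)}\,dt,\qquad \varphi(t)=\ln\cosh t-\frac{(t-h)^2}{4d\beta},
\]
with $\varphi$ replaced by $\ln\cosh t-\frac{t^2}{4d\beta}$ when $h=0$; by the symmetry $Z_{d,\beta,h}=Z_{d,\beta,-h}$ one may take $h=|h|\ge 0$. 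The critical point equation $\varphi'(t)=\tanh t-\frac{t-h}{2d\beta}=0$ is exactly the mean-field equation $\tanh(\frac{\beta}{\beta_c}z+h)=z$ after the substitution $z=\frac{t-h}{2d\beta}$ (using $2d\beta=\beta/\beta_c$), which identifies the saddle $t_*$ in parts (3) and (4). The whole theorem then amounts to a case-by-case Laplace analysis of this integral.

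For the nondegenerate regimes (1), (3), (4), I would apply the standard Laplace method to all orders. In the high-temperature case (1) with $h=0$, $\beta<\beta_c$, the function $\varphi$ has a unique, nondegenerate global maximum at $t=0$ with $\varphi(0)=0$ and $\varphi''(0)=1-\frac{1}{2d\beta}<0$; the leading Gaussian contribution $\sqrt{2\pi/(n|\varphi''(0)|)}$ combines with the prefactor to give exactly $e_0^{<}=(1-2d\beta)^{-1/2}$, and the full expansion in powers of $n^{-1}$ comes from Watson's lemma after a Morse change of variables, the odd derivatives producing the coefficients $e_p^{<}$. The low-temperature case (3) is identical except that $t=0$ becomes a local minimum and there are two symmetric nondegenerate maxima at $\pm t_*$; both contribute equally, yielding the factor of $2$ visible in $e_0^{>}=2^{1/2}(d\beta)^{-1/2}(\frac{1}{2d\beta}-\frac{1}{\cosh^2 t_*})^{-1/2}$, where $|\varphi''(t_*)|=\frac{1}{2d\beta}-\frac{1}{\cosh^2 t_*}$. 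Case (4) with $h\neq 0$ is the single-saddle version: the external field breaks the symmetry, so I must first argue that the global maximum is attained at a \emph{unique} point $t_*$ (the one corresponding to the maximal solution $z_*>0$), any other critical points being strictly dominated, after which the same nondegenerate Laplace expansion gives $e_0^{h\neq 0}=(2d\beta)^{-1/2}(\frac{1}{2d\beta}-\frac{1}{\cosh^2 t_*})^{-1/2}$ and the higher coefficients $e_p^{h\neq0}$.

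The critical case (2), $h=0$, $\beta=\beta_c$, is the genuinely different one and I expect it to be the main obstacle. Here $\frac{1}{2d\beta}=1$, so $\varphi''(0)=0$ and the maximum at $t=0$ is degenerate; the Taylor expansion gives $\varphi(t)=-\frac{t^4}{12}+\frac{t^6}{45}-\cdots$, a quartic maximum. I would localize the integral to $|t|\le\delta$ (the complement being exponentially negligible), rescale by $t=n^{-1/4}u$, and expand the integrand as $e^{-u^4/12}$ times a power series in $n^{-1/2}$ coming from the higher Taylor terms of $\varphi$. Integrating term by term against $e^{-u^4/12}$ produces moments $\int_{\mathbb R}u^{4k}e^{-u^4/12}\,du$, which are explicit $\Gamma$-function values; in particular the leading integral $\frac12\,12^{1/4}\,\Gamma(\tfrac14)\,n^{-1/4}$ combined with the prefactor $\sqrt{n/(2\pi)}$ reproduces $e_0^{c}=\frac{1}{\sqrt{2\pi}}(\tfrac34)^{1/4}\Gamma(\tfrac14)$ (using $\tfrac12 12^{1/4}=(\tfrac34)^{1/4}$), and the overall $n^{-1/4}$ localization factor explains the half-integer exponents $n^{-p/2+1/4}$ and the coefficients $e_p^{c}$.

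The hard part is twofold: first, setting up a \emph{generalized Watson's lemma} for the quartic critical point that yields the expansion to order $M$ with rigorous control of the remainder, rather than only the leading term; and second, ensuring the $o(n^{-M})$ errors in all four cases are genuinely uniform, which requires quantitative bounds on the tail contributions $|t|>\delta$ (showing they are $O(e^{-cn})$) together with explicit control of the Taylor remainder of $\varphi$ near each maximum. Once these remainder estimates are in place, the coefficients $e_p^{<},e_p^{c},e_p^{>},e_p^{h\neq0}$ are read off as the stated combinations of derivatives of $\varphi$ at the relevant critical points (and of $\Gamma$-values in the critical case), matching the formulas referenced in the statement.
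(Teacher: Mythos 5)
Your proposal is correct and takes essentially the same route as the paper: Hubbard--Stratonovich linearization to the one-dimensional integral with phase $\varphi(t)=\ln\cosh t-(t-h)^2/(4d\beta)$, identification of the saddle(s) with the maximal mean-field solution, and case-by-case Laplace asymptotics, including the degenerate quartic maximum with $t\sim n^{-1/4}$ scaling at criticality and the doubled contribution from the two symmetric maxima $\pm t_*$ in the low-temperature case. The paper's only implementation difference is technical rather than conceptual: it truncates at the intermediate scale $|t-t_*|\le -x\ln x$ (with $x=1/\sqrt{n}$) and bounds the remainder by $Ce^{-c(\ln x)^2}$, rather than your fixed-$\delta$ window with $O(e^{-cn})$ tails.
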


Secondly, we get two important results for the positive Littlewood Offord problems $Q^+_n$ for the Curie-Weiss models.
\begin{theorem}\label{thm: positive Littlewood-Offord problems}
Let $n\geq 1$ and  $\varepsilon=(\varepsilon_1, \varepsilon_2,\ldots,\varepsilon_n)$ be the vector of spins in Curie-Weiss models.
Then we have that
\begin{align}\label{eq: Q_n^+ equivalent expression}
Q_n^{+}=&\sup_{x\in\mathbb{R}}P(\varepsilon_1+\varepsilon_2+\cdots+\varepsilon_n\in(x-1,x+1))\notag\\
=&\sup_{x\in\mathbb{R}}P(\varepsilon_1+\varepsilon_2+\cdots+\varepsilon_n=x)\notag\\
=&\max_{k=0,1,\ldots,n}\binom{n}{k}\frac{1}{Z_{d,\beta,h}}\exp\left\{\frac{d\beta}{n}(2k-n)^2+h(2k-n)\right\},
\end{align}
which means $Q^+_n$ is attained for $v_1=v_2=\cdots=v_{n}=1$.
\end{theorem}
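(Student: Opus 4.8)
The plan is to exploit the fact that, by \eqref{defn: Curie-Weiss}, the Curie-Weiss weight of a configuration $\sigma\in\{-1,1\}^n$ depends only on the number $k$ of coordinates equal to $+1$: if $A=\{i:\sigma_i=+1\}$ and $|A|=k$, then $\sum_j\sigma_j=2k-n$, so
\[
P(\varepsilon=\sigma)=\frac{1}{Z_{d,\beta,h}}\exp\left\{\frac{d\beta}{n}(2k-n)^2+h(2k-n)\right\}=:w_k .
\]
I would first dispose of the three chained equalities on the right of \eqref{eq: Q_n^+ equivalent expression}. The attainable values of $\varepsilon_1+\cdots+\varepsilon_n$ are exactly $\{2k-n:0\le k\le n\}$, which are spaced $2$ apart; hence any open interval $(x-1,x+1)$ of length $2$ contains at most one of them, so $P(\varepsilon_1+\cdots+\varepsilon_n\in(x-1,x+1))=P(\varepsilon_1+\cdots+\varepsilon_n=2k-n)$ for the unique admissible value in the interval, or $0$ if there is none. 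Taking the supremum over $x$ then collapses to a maximum over $k\in\{0,1,\dots,n\}$ of $\binom{n}{k}w_k$, since there are $\binom{n}{k}$ configurations with $|A|=k$. This settles the second and third equalities.

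The substance is the first equality, namely that the all-ones choice $v_1=\cdots=v_n=1$ is optimal, which I would prove as a one-sided bound: for every $x\in\mathbb{R}$ and every $v_1,\dots,v_n\ge 1$,
\[
P\left(\sum_{i=1}^n v_i\varepsilon_i\in(x-1,x+1)\right)\le \max_{0\le k\le n}\binom{n}{k}w_k .
\]
Parametrize each configuration by $A=\{i:\varepsilon_i=+1\}$, and let $\mathcal{A}$ be the family of subsets $A$ for which $\sum_i v_i\varepsilon_i\in(x-1,x+1)$. The key geometric observation is that $\mathcal{A}$ is an antichain: if $A\subsetneq B$, then flipping the coordinates in $B\setminus A$ from $-1$ to $+1$ changes the sum by $2\sum_{i\in B\setminus A}v_i\ge 2|B\setminus A|\ge 2$ (using $v_i\ge 1$), so the two sums differ by at least $2$ and cannot both lie in an open interval of length $2$.

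It remains to bound the total weight of an antichain whose weight per element depends only on its size. Here I would invoke the LYM inequality: for any antichain $\mathcal{A}\subseteq 2^{\{1,\dots,n\}}$ one has $\sum_{A\in\mathcal{A}}\binom{n}{|A|}^{-1}\le 1$. Writing
\[
P\left(\sum_{i=1}^n v_i\varepsilon_i\in(x-1,x+1)\right)=\sum_{A\in\mathcal{A}}w_{|A|}
=\sum_{A\in\mathcal{A}}\binom{n}{|A|}^{-1}\Bigl(\binom{n}{|A|}w_{|A|}\Bigr)
\le\Bigl(\max_{0\le k\le n}\binom{n}{k}w_k\Bigr)\sum_{A\in\mathcal{A}}\binom{n}{|A|}^{-1},
\]
the LYM bound gives exactly the desired inequality. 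Equality is attained by taking $v_1=\cdots=v_n=1$ and $x=2k^*-n$, where $k^*$ maximizes $\binom{n}{k}w_k$, as already identified in the first part; this proves the first equality and completes the argument. I expect the only delicate point to be the antichain step together with the recognition that, because $w_{|A|}$ is constant on each layer, the weighted Sperner problem reduces cleanly to LYM, the remainder being bookkeeping.
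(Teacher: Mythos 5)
Your proposal is correct and follows essentially the same route as the paper's proof: the antichain observation from $v_i\geq 1$, the LYM inequality, and the fact that the Curie-Weiss weight is constant on each layer $\{|A|=k\}$ (which the paper phrases slightly more abstractly via exchangeability of $\varepsilon$, allowing its later generalization to arbitrary exchangeable $\{-1,1\}^n$-valued vectors). The handling of the chained equalities via the spacing-$2$ lattice of values is also the same as in the paper.
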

By Theorem~\ref{thm: positive Littlewood-Offord problems},  we obtain the asymptotic properties of $Q_n^{+}$ as follows:

\begin{theorem}\label{thm: positive Littlewood-Offord problem asymptotics}
As $n\to+\infty$, the asymptotic equivalence of $Q^+_n$ is as follows:
\begin{enumerate}[(1)]
\item For $h=0$ and $\beta<\beta_{c}$, we have that
\begin{equation*}\label{eq: Q_n^+ subcritical asymptotics}
Q_n^{+}\sim\sqrt{\frac{2(1-2d\beta)}{\pi}}n^{-\frac{1}{2}}.
\end{equation*}

\item For $h=0$ and $\beta=\beta_{c}$, we have that
\begin{equation*}\label{eq: Q_n^+ critical asymptotics}
Q_n^{+}\sim \frac{2}{(\frac{3}{4})^{\frac{1}{4}}\Gamma(\frac{1}{4})}n^{-\frac{3}{4}}.
\end{equation*}

\item For $h=0$ and $\beta>\beta_{c}$, we have that
\begin{equation*}\label{eq: Q_n^+ supercritical asymptotics}
Q_n^{+}\sim\sqrt{\frac{1/(1-z_{*}^2)-2d\beta}{2\pi}}n^{-\frac{1}{2}},
\end{equation*}
where $z_{*}$ is the maximal solution of the mean-field equation
$z=\tanh\left(\frac{\beta}{\beta_{c}}z\right).$

\item For $h\neq 0$, we have that
\begin{equation*}\label{eq: Q_n^+ h neq 0 asymptotics}
Q_n^{+}\sim\sqrt{\frac{2(1/(1-z_{*}^2)-2d\beta)}{\pi}}n^{-\frac{1}{2}},
\end{equation*}
where $z_{*}$ is the maximal solution of the mean-field equation
$z=\tanh\left(\frac{\beta}{\beta_{c}}z+|h|\right).$
\end{enumerate}
\end{theorem}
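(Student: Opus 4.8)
The plan is to run everything off the exact identity in Theorem~\ref{thm: positive Littlewood-Offord problems}. Setting
\[
a_k:=\binom{n}{k}\exp\Bigl\{\tfrac{d\beta}{n}(2k-n)^2+h(2k-n)\Bigr\},\qquad 0\le k\le n,
\]
that theorem gives $Q_n^+=Z_{d,\beta,h}^{-1}\max_{0\le k\le n}a_k$. Since Theorem~\ref{thm: normalizing constant expansion} already supplies the leading term of the denominator $Z_{d,\beta,h}=\sum_k a_k$, the whole problem reduces to finding the leading asymptotics of the single largest summand $\max_k a_k$. By the symmetry $\varepsilon\mapsto-\varepsilon$, which exchanges $h$ and $-h$, we may assume $h\ge0$, so that $|h|=h$ throughout. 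Writing $x=(2k-n)/n\in[-1,1]$ and applying Stirling's formula to $\binom{n}{k}$, one gets $\tfrac1n\ln a_k=\ln 2+G_0(x)+o(1)$ with
\[
G_0(x)=-I(x)+d\beta x^2+hx,\qquad I(x)=\tfrac{1+x}{2}\ln(1+x)+\tfrac{1-x}{2}\ln(1-x),
\]
so the exponential growth of the maximal term is governed by $\max_x G_0(x)$.

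Next I would locate the maximizer. The stationarity condition $G_0'(x)=2d\beta x+h-\operatorname{arctanh}x=0$ is exactly the mean-field equation $x=\tanh(\tfrac{\beta}{\beta_c}x+h)$, whose maximal root is the $z_*$ appearing in the statement; for $h=0,\ \beta\le\beta_c$ the only root is $z_*=0$. To turn this into a statement about the discrete maximizer I would use the ratio
\[
\frac{a_{k+1}}{a_k}=\frac{n-k}{k+1}\exp\Bigl\{\tfrac{4d\beta}{n}(2k-n+1)+2h\Bigr\},
\]
which crosses the value $1$ where $G_0'$ changes sign; this shows that the global maximum of $a_k$ is attained at $k^\ast\approx n(1+z_*)/2$ in the subcritical, critical, and $h\ne0$ regimes, while in the supercritical ($h=0$) regime there are two symmetric global maxima near $n(1\pm z_*)/2$. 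A short but essential algebraic computation, using $z_*=\tanh t_*$ and $1-z_*^2=\cosh^{-2}t_*$, then verifies the identity $G_0(z_*)=\varphi(t_*)$, which matches the exponential growth of $\max_k a_k$ with that of $Z_{d,\beta,h}$ in cases (3)--(4).

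With the location pinned down, I would read off $\max_k a_k$ by Stirling. In the subcritical and critical cases $k^\ast$ sits within distance $1$ of $n/2$, so $\max_k a_k\sim\binom{n}{\lfloor n/2\rfloor}\sim 2^n\sqrt{2/(\pi n)}$, the exponential factor being $1+o(1)$. In the supercritical and $h\ne0$ cases the same expansion at $x=z_*$ gives
\[
\max_k a_k\sim\frac{2^n e^{n\varphi(t_*)}}{\sqrt{\pi n(1-z_*^2)/2}}.
\]
Dividing by the leading term of $Z_{d,\beta,h}$ from Theorem~\ref{thm: normalizing constant expansion} and simplifying with $1-z_*^2=\cosh^{-2}t_*$ then yields the four stated equivalences; in particular the factor-of-two difference between (3) and (4) reflects that the supercritical $Z$ collects mass from \emph{both} peaks (so its leading coefficient $e_0^{>}$ carries a factor $2$), whereas $\max_k a_k$ is the height of a \emph{single} peak.

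The main obstacle is the critical case $h=0,\ \beta=\beta_c$. There $2d\beta=1$, so the factor $\exp\{(2k-n)^2/(2n)\}$ exactly cancels the Gaussian decay of the central binomial coefficient, and $a_k$ becomes essentially flat over a window of width of order $n^{3/4}$; the quadratic term in $G_0$ vanishes and one must carry the quartic Stirling correction $-(2k-n)^4/(12n^3)$ to confirm both that the maximum is attained at the center and that its value is still $\sim 2^n\sqrt{2/(\pi n)}$. Making the Stirling expansion uniform over the relevant $k$-window, rather than just at a fixed $x$, is the other technical point that needs care in all four regimes.
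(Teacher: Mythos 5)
Your proposal is correct and follows essentially the same route as the paper: both reduce the problem via Theorem~\ref{thm: positive Littlewood-Offord problems} to $\max_k a_k/Z_{d,\beta,h}$, evaluate the maximal term by Stirling's formula through the free energy functional (your $G_0$ is exactly $-f_{CW}-\ln 2$), locate the maximizer via the mean-field equation, and divide by the expansions of Theorem~\ref{thm: normalizing constant expansion}, your identity $G_0(z_*)=\varphi(t_*)$ being precisely the content of Remarks~\ref{rem: supercritical connection with the free energy2} and~\ref{rem: h neq 0 connection with the free energy}. Your refinements (the ratio test $a_{k+1}/a_k$ and the explicit quartic analysis at $\beta=\beta_c$) only make precise what the paper compresses into Lemma~\ref{lem: binom bound} and the uniform Stirling lemma following it; the one small slip --- at criticality the discrete maximizer need not lie within $O(1)$ of $n/2$, since the prefactor $(1-m^2)^{-1/2}$ can shift it by order $\sqrt{n}$ --- is immaterial, because all values of $a_k$ in that window agree to within a factor $1+O(1/n)$, and only the value of $\max_k a_k$, not its location, enters your conclusion.
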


Next, we get two important results on the Littlewood-Offord problems $Q_n$ for Curie-Weiss models.

\begin{theorem}\label{thm: Littlewood-Offord problems}
Let $n\geq 1$ and $\varepsilon=(\varepsilon_1, \varepsilon_2,\ldots,\varepsilon_n)$ be the vector of spins in Curie-Weiss models.

Then, for even $n$, we have that
\begin{equation}\label{eq: Qn even equality}
Q_n = P\left(\sum_{i=1}^{n/2}\varepsilon_i-\sum_{i=n/2+1}^{n}\varepsilon_i=0\right),
\end{equation}
which means that $Q_n$ is attained for $x=0$, $v_1=v_2=\cdots=v_{n/2}=1$ and $v_{n/2+1}=v_{n/2+2}=\cdots=v_{n}=-1$.

For odd $n$, we have that
\begin{equation}\label{eq: odd n upper lower bounds on Qn}
P\left(\sum_{i=1}^{(n-1)/2}\varepsilon_i-\sum_{i=(n+1)/2}^{n}\varepsilon_i=1\right)\leq Q_{n}\leq Q_{n-1}.
\end{equation}
\end{theorem}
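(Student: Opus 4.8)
The plan is to recast $Q_n$ as a purely combinatorial extremal problem on the hypercube and then solve that problem by a weighted Sperner-type argument. Write $w(m)=\exp\{\frac{d\beta}{n}m^2+hm\}$, so that $P(\varepsilon=\sigma)=w(\sum_j\sigma_j)/Z_{d,\beta,h}$ depends on $\sigma$ only through its magnetization. Fix $v$ with $|v_j|\ge 1$, set $s_j=\mathrm{sign}(v_j)$, and substitute $\eta_j=s_j\varepsilon_j$; then $\sum_j v_j\varepsilon_j=\sum_j|v_j|\eta_j$ while $\sum_j\varepsilon_j=\sum_j s_j\eta_j$. If $\eta\le\eta'$ coordinatewise and $\eta\ne\eta'$, then $\sum_j|v_j|\eta'_j-\sum_j|v_j|\eta_j\ge 2$ because $|v_j|\ge 1$, so the event $\{\sum_j v_j\varepsilon_j\in(x-1,x+1)\}$ corresponds to an antichain $A$ in the $\eta$-cube and
\[ P\Big(\sum_j v_j\varepsilon_j\in(x-1,x+1)\Big)=\frac{1}{Z_{d,\beta,h}}\sum_{\eta\in A}w\Big(\sum_j s_j\eta_j\Big). \]
When all $|v_j|=1$ the sum has fixed parity, so an open interval of length $2$ captures a single attainable value; this is why the conjectured optima are exact point masses. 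Hence $Q_n Z_{d,\beta,h}\le\max_{s,A}\sum_{\eta\in A}w(\sum_j s_j\eta_j)$ over sign patterns $s$ and antichains $A$, and it suffices to show that for even $n$ this maximum is attained at the balanced $s$ and the Boolean middle layer, whose weight is exactly $Z_{d,\beta,h}\,P(\sum_{i\le n/2}\varepsilon_i-\sum_{i>n/2}\varepsilon_i=0)=\sum_{a}\binom{n/2}{a}^2 w(4a-n)$.

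The core step is thus the weighted anti-concentration inequality $\sum_{\eta\in A}w(\sum_j s_j\eta_j)\le\sum_{a}\binom{n/2}{a}^2 w(4a-n)$. When all $s_j$ agree, $w(\sum_j s_j\eta_j)=w(\pm(2\,\mathrm{level}(\eta)-n))$ is a function of the Boolean level, and the inequality follows from the normalized-matching (LYM) inequality together with the log-convexity of $w$, exactly as in the proof of Theorem~\ref{thm: positive Littlewood-Offord problems}. The genuine obstacle is the mixed-sign case: the functional $\sum_j s_j\eta_j$ governing the weight is \emph{not} aligned with the Boolean level governing the antichain condition (the two linear functionals agree on $\{j:s_j=+1\}$ and are opposite on $\{j:s_j=-1\}$), so naive LYM only bounds each level by its cardinality times the maximal weight of a single element in it, which overshoots the target level-sum. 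My plan is to split coordinates into the groups $P^\pm=\{j:s_j=\pm1\}$ of sizes $p,q$, write $\sum_j s_j\eta_j=U-V$ with $U=\sum_{P^+}\eta_j$, $V=\sum_{P^-}\eta_j$, and use the product symmetric chain decomposition of $B_p\times B_q$ (de Bruijn--Tengbergen--Kruyswijk). Combined with the convexity of $w$, I expect this to force the optimal antichain (for fixed $s$) to be a Boolean level, reducing the quantity to $\sum_{\eta:\,\mathrm{level}\,\ell}w(\sum_j s_j\eta_j)=Z_{d,\beta,h}\,P(D_{p,q}=2\ell-n)$, where $D_{p,q}=\sum_{P^+}\varepsilon_i-\sum_{P^-}\varepsilon_i$.

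It then remains to optimize over the level $\ell$ (equivalently the target $t$) and the split $(p,q)$. For the balanced split $p=q=n/2$, swapping the two halves is a measure-preserving bijection that negates $D$ while fixing the magnetization, so $D_{n/2,n/2}\overset{d}{=}-D_{n/2,n/2}$ for every $h$; conditioning on the magnetization makes the conditional law of $D$ a Vandermonde-type product of binomials, which is symmetric and log-concave, and mixing these with the weights $w$ should yield that $t\mapsto P(D_{n/2,n/2}=t)$ is maximized at $t=0$, using the convexity of $w$ and log-concavity of the binomials. A separate convexity comparison should show the balanced split dominates the unbalanced ones. Together these give the even-$n$ equality \eqref{eq: Qn even equality} with maximizer $t=0$, $p=q=n/2$.

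For odd $n$ the lower bound in \eqref{eq: odd n upper lower bounds on Qn} is immediate by evaluating the probability at $p=(n-1)/2$, $q=(n+1)/2$ and $t=1$, the value closest to balanced. The upper bound $Q_n\le Q_{n-1}$ is the second main obstacle: one cannot simply marginalize out a spin, since the $(n-1)$-coordinate marginal of an $n$-spin Curie--Weiss measure is not itself a Curie--Weiss measure (the parameter $n$ enters the Hamiltonian). My plan is to prove it at the level of the extremal representation above, fixing one coordinate $\eta_n=\pm1$, projecting an optimal antichain $A$ to the $(n-1)$-cube—projections of antichains along a fixed coordinate are again antichains—and comparing the induced weights $w(m\pm 1)$ on the projection to the $(n-1)$-model weights; this comparison is where the explicit exponential form of $w$ and the expansion of Theorem~\ref{thm: normalizing constant expansion} must be used, and I expect it to be the most delicate part of the argument.
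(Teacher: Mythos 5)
Your reduction of $Q_n$ to a weighted antichain problem is correct: the event $\{\sum_j v_j\varepsilon_j\in(x-1,x+1)\}$ is indeed an antichain in the $\eta$-cube, the weight $w(\sum_j s_j\eta_j)$ depends only on the magnetization, the target quantity is $\sum_a\binom{n/2}{a}^2w(4a-n)$, and both lower bounds are immediate, exactly as in the paper. But the two steps carrying all the difficulty are missing, and the mechanisms you propose for them point the wrong way. For the core mixed-sign inequality, your plan is a symmetric chain decomposition of $B_p\times B_q$ plus convexity of $w$. The trouble is that $w(m)=\exp\{d\beta m^2/n+hm\}$ is log-\emph{convex}, so on each grid $C_i\times D_j$ of the product decomposition the weight, which is constant on diagonals $u-v=\delta$ and convex in $\delta$, is largest on the \emph{extreme} diagonals. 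An antichain restricted to such a grid can meet diagonals only with pairwise gaps $\geq 2$ (points on adjacent diagonals of a grid are always comparable), and by convexity the optimal such family contains the extreme corners; but for most grids the middle level $u+v=n/2$ cuts out the \emph{other} parity class of diagonals (e.g.\ on the grid formed from a full chain of $B_{n/2}$ and a chain of levels $[1,n/2-1]$, the extremes are $\delta=\pm(n/2-1)$ while the middle level hits $\delta\in\{2-n/2,\dots,n/2-2\}$). Hence the grid-by-grid bound strictly overshoots $\sum_a\binom{n/2}{a}^2w(4a-n)$, and closing the gap requires incomparability constraints \emph{across} grids, which your sketch never addresses. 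Note also that the paper's own parallel-edge argument at the Bernoulli level (Lemma~\ref{lem: graph parallel}) rests on unimodality/log-concavity of $\binom{|A|}{k}p^k(1-p)^{|A|-k}$, which is exactly the property the Curie--Weiss weight lacks; and the comparison you must win is exact and tight, since for $h\neq 0$ and large $\beta$ the best all-plus level already has weight of the same leading order $e^{d\beta n+|h|n}$ as the balanced middle level.

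The odd-$n$ upper bound is a second genuine gap: $Q_n\leq Q_{n-1}$ is an exact inequality for every fixed odd $n$, so it cannot be proved by invoking the asymptotic expansion of Theorem~\ref{thm: normalizing constant expansion}, which only controls errors as $n\to\infty$ (also, full coordinate projections of antichains need not be antichains; only the slices $\{\eta_n=\pm1\}$ are). The paper's route is essentially different and much softer at precisely these two points: by Hubbard's transformation the Curie--Weiss measure is an explicit mixture $P=\int_0^1 P_p\,\nu(\mathrm{d}p)$ of i.i.d.\ Bernoulli($p$) product measures (Lemma~\ref{lem: Curie weiss and Bernoulli}), whence $Q_n\leq\int_0^1 Q_{n,p}\,\nu(\mathrm{d}p)$. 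For even $n$, Proposition~\ref{prop: Qp} together with Singhal's theorem \cite{SinghalMR4440097} shows that the \emph{same} pair (balanced signs, $x=0$) attains $Q_{n,p}$ for \emph{every} $p$; this uniformity in $p$ is what turns the inequality $\sup\int\leq\int\sup$ into an equality after integration. The odd case then follows because $Q_{n,p}\leq Q_{n-1,p}$ is trivial under a product measure (marginalize one independent spin), giving $Q_n\leq\int_0^1 Q_{n-1,p}\,\nu(\mathrm{d}p)=Q_{n-1}$ by the even-$n$ equality. A direct combinatorial proof on the Curie--Weiss weights, which is what your proposal attempts, would in effect have to reprove a weighted, log-convex analogue of Singhal's theorem; that is the real content of the statement and it is absent from the proposal.
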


As a consequence of Theorem~\ref{thm: Littlewood-Offord problems}, we have the following asymptotic properties of $Q_n$.
\begin{theorem}\label{thm: Littlewood-Offord problem asymptotics}
As $n\to\infty$, we have that
\[Q_n\sim n^{-\frac{1}{2}}\sqrt{\frac{2}{\pi}}\cosh t_{*},\]
where $z_{*}=\frac{t_{*}-|h|}{2d\beta}\geq 0$ is the maximal solution of the mean-field equation
$\tanh\left(\frac{\beta}{\beta_c}z+|h|\right)=z.$

To be more precise, we give the asymptotic properties of $Q_n$ in four cases as $n\to +\infty$:
\begin{enumerate}[(1)]
  \item For $h=0$ and $\beta<\beta_c$, we have that $t_*=0$ and
  \begin{equation*}
  Q_n=\sqrt{\frac{2}{\pi}}n^{-1/2}+O(n^{-3/2}).
  \end{equation*}
   \item For $h=0$ and $\beta=\beta_c$, we have that $t_*=0$ and
  \begin{equation*}
  Q_n=\sqrt{\frac{2}{\pi}}n^{-1/2}+\frac{2\cdot 3^{\frac{1}{2}}\cdot\pi^{\frac{1}{2}}}{(\Gamma(1/4))^2}n^{-1}+o(n^{-1}).
  \end{equation*}
  \item For $h=0$ and $\beta>\beta_c$, we have that $t_*>0$ and
  \begin{equation*}
   Q_n=\sqrt{\frac{2}{\pi}}\cosh(t_*)n^{-1/2}+O(n^{-3/2}).
  \end{equation*}
  \item For $h\neq 0$, we have that $t_*>0$ and
   \begin{equation*}
   Q_n=\sqrt{\frac{2}{\pi}}\cosh(t_*)n^{-1/2}+O(n^{-3/2}).
  \end{equation*}
\end{enumerate}
\end{theorem}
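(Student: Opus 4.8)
The plan is to reduce everything to the exact combinatorial identity supplied by Theorem~\ref{thm: Littlewood-Offord problems} and then extract the asymptotics of the resulting sum by Laplace's method, dividing by the normalizing-constant expansion of Theorem~\ref{thm: normalizing constant expansion}. For even $n$, set $m=n/2$. Since the Curie--Weiss weight of a configuration depends only on its total spin, the constraint $\sum_{i\le m}\varepsilon_i=\sum_{i>m}\varepsilon_i$ forces equal counts $k$ of up-spins in the two halves, the total spin is then $4k-n$, and there are $\binom{m}{k}^2$ such configurations; hence Theorem~\ref{thm: Littlewood-Offord problems} gives
\[Q_n=\frac{1}{Z_{d,\beta,h}}\sum_{k=0}^{m}\binom{m}{k}^2\exp\Big\{\frac{d\beta}{n}(4k-n)^2+h(4k-n)\Big\}=:\frac{N_n}{Z_{d,\beta,h}}.\]

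Next I would estimate $N_n$. Writing $a=(4k-n)/n\in[-1,1]$ for the magnetization and $\rho=\tfrac{1+a}{2}=k/m$, Stirling's formula yields $\binom{m}{k}^2\approx \tfrac{2^n}{\pi n}\tfrac{4}{1-a^2}e^{n(H(\rho)-\ln2)}$ with binary entropy $H(\rho)=-\rho\ln\rho-(1-\rho)\ln(1-\rho)$, so that after the Euler--Maclaurin passage from the sum (step $\Delta a=4/n$) to an integral,
\[N_n\approx\frac{2^n}{\pi}\int_{-1}^{1}\frac{e^{n\Phi(a)}}{1-a^2}\,da,\qquad \Phi(a)=H\!\left(\tfrac{1+a}{2}\right)-\ln2+d\beta a^2+ha.\]
The stationarity condition $\Phi'(a)=0$ reads $\operatorname{arctanh}(a)=2d\beta a+h$, i.e. $a=\tanh(\tfrac{\beta}{\beta_c}a+h)$, whose maximal solution is exactly the mean-field magnetization $z_*$. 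Using $z_*=\tanh t_*$, $t_*=2d\beta z_*+|h|$ and $1-z_*^2=\cosh^{-2}t_*$, one checks the two identities $\Phi(z_*)=\ln\cosh t_*-d\beta z_*^2=\varphi(t_*)$ and $-\Phi''(z_*)=\cosh^2t_*-2d\beta$. A conceptual check on the constant $\cosh t_*$ is the Hubbard--Stratonovich picture: conditionally on the auxiliary Gaussian field the spins become i.i.d. with variance $\sigma^2=\cosh^{-2}t_*$, and $P(S_1=S_2)$ is a collision probability which the local central limit theorem evaluates to $\sqrt{2/(\pi n)}\,\sigma^{-1}=\sqrt{2/(\pi n)}\cosh t_*$ at the saddle.

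In the non-critical regimes the maximum of $\Phi$ is nondegenerate: a single interior saddle when $h\neq0$ (Theorem~\ref{thm: normalizing constant expansion}(4)) or $\beta>\beta_c$, two symmetric saddles $\pm z_*$ whose equal contributions to $N_n$ and to $Z_{d,\beta,0}$ cancel in the ratio when $h=0,\ \beta>\beta_c$ (case (3)), and the saddle $a_*=0$ with $\Phi''(0)=2d\beta-1<0$ when $\beta<\beta_c$ (case (1)). Standard Laplace asymptotics then give $N_n=2^n e^{n\varphi(t_*)}\cosh^2t_*\sqrt{2/(\pi n(\cosh^2t_*-2d\beta))}\,(1+O(n^{-1}))$, and dividing by the expansion of $Z_{d,\beta,h}$, after simplifying the leading coefficient to $e_0=\cosh t_*/\sqrt{\cosh^2t_*-2d\beta}$, produces $Q_n=\sqrt{2/\pi}\,\cosh t_*\,n^{-1/2}+O(n^{-3/2})$; carrying the second Laplace coefficient together with the second term of the $Z$-expansion yields the stated $O(n^{-3/2})$ error. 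I expect the main obstacle to be the critical case $h=0,\ \beta=\beta_c$ (case (2)): here $\Phi''(0)=0$ and $\Phi(a)=-a^4/12+O(a^6)$, so Laplace's method degenerates to the quartic regime, the numerator scales with $\int e^{-na^4/12}\,da\sim\tfrac12\Gamma(1/4)(12/n)^{1/4}$, and one must match this against the delicate $n^{-p/2+1/4}$ expansion of $Z_{d,\beta_c,0}$; tracking the quartic and sextic contributions to the required order is what produces the non-Gaussian second term $\frac{2\cdot3^{1/2}\pi^{1/2}}{(\Gamma(1/4))^2}n^{-1}$.

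Finally, for odd $n$ I would sandwich via Theorem~\ref{thm: Littlewood-Offord problems}. The lower bound $P(\sum_{i\le(n-1)/2}\varepsilon_i-\sum_{i\ge(n+1)/2}\varepsilon_i=1)$ equals $\frac{1}{Z_{d,\beta,h}}\sum_k\binom{(n-1)/2}{k}\binom{(n+1)/2}{k}e^{\frac{d\beta}{n}(4k-n)^2+h(4k-n)}$, whose leading Laplace term is again $\sqrt{2/\pi}\,\cosh t_*\,n^{-1/2}$ because $\binom{(n-1)/2}{k}\binom{(n+1)/2}{k}$ agrees with $\binom{n/2}{k}^2$ to leading order; the upper bound $Q_{n-1}$ has the same leading asymptotics, so the two bounds coincide to first order and close the argument.
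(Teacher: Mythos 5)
Your reduction for even $n$ is correct and takes a genuinely different route from the paper. You exploit the fact that the Curie--Weiss weight depends only on total spin to rewrite $Q_n=\frac{1}{Z_{d,\beta,h}}\sum_k\binom{n/2}{k}^2\exp\{\frac{d\beta}{n}(4k-n)^2+h(4k-n)\}$, and then run Stirling plus a one-dimensional Laplace analysis of $\Phi(a)=H(\tfrac{1+a}{2})-\ln 2+d\beta a^2+ha$. The paper instead represents the same numerator $O_{d,\beta,h}$ probabilistically: by Hubbard--Stratonovich the spins are conditionally i.i.d.\ given an auxiliary Gaussian $Y$, and Fourier inversion of the conditional characteristic function of $T_n=\sum_{i\le n/2}\varepsilon_i-\sum_{i>n/2}\varepsilon_i$ yields a two-dimensional integral in $(t,u)$ (Gaussian $\times$ uniform phase), to which a two-dimensional Laplace method is applied (Lemma~\ref{lem: expectation representation of O d beta h}, Proposition~\ref{prop: O asymptotics even}). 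Your identities $\Phi(z_*)=\varphi(t_*)$, $-\Phi''(z_*)=\cosh^2 t_*-2d\beta$, and the cancellation of the doubled saddle contributions in the ratio when $h=0,\ \beta>\beta_c$, all check out, and your quartic analysis at criticality ($\Phi(a)=-a^4/12+O(a^6)$) reproduces the correct leading constant; carried to second order it does match the paper's $\frac{2\cdot 3^{1/2}\pi^{1/2}}{(\Gamma(1/4))^2}n^{-1}$. What your route buys is elementarity (no characteristic functions, no multi-dimensional Laplace); what the paper's route buys is that it never needs Stirling or Riemann-sum error control, and the same conditional-i.i.d.\ structure is already used to prove Theorem~\ref{thm: Littlewood-Offord problems} itself, so the machinery is unified.

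The one genuine gap is the odd-$n$ case. You close it by noting that $P_n=P(\sum_{i\le (n-1)/2}\varepsilon_i-\sum_{i\ge (n+1)/2}\varepsilon_i=1)$ and $Q_{n-1}$ ``coincide to first order.'' That suffices for the leading equivalence $Q_n\sim\sqrt{2/\pi}\,\cosh t_*\,n^{-1/2}$, but not for the refined statements you are asked to prove: cases (1), (3), (4) assert an error $O(n^{-3/2})$, and case (2) asserts an explicit second term of order $n^{-1}$. Since the sandwich only gives $P_n\le Q_n\le Q_{n-1}$, the lower bound must itself be expanded to the same precision: you need $P_n=\sqrt{2/\pi}\cosh t_*\,n^{-1/2}+O(n^{-3/2})$ in the non-critical cases and $P_n=\sqrt{2/\pi}\,n^{-1/2}+\frac{2\cdot3^{1/2}\pi^{1/2}}{(\Gamma(1/4))^2}n^{-1}+o(n^{-1})$ at criticality; first-order agreement of $P_n$ leaves an uncontrolled $o(n^{-1/2})$ deficit. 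This is exactly what the paper does in Proposition~\ref{prop: Pn expansion odd}, by showing $W^{odd}(x)=W(x)(1+O(x^2))$ off criticality and $W^{odd}(x)=W(x)(1+O(x^{3/2}))$ at criticality. Your combinatorial route can deliver the same thing --- expand $\binom{(n-1)/2}{k}\binom{(n+1)/2}{k}$ by Stirling to relative order $O(1/n)$ (at criticality, to relative $o(n^{-1/2})$) and rerun the Laplace analysis --- but as written the proposal does not carry this out, so the four itemized expansions are established only for even $n$.
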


\begin{remark}
From Theorem~\ref{thm: Littlewood-Offord problem asymptotics}, we observe different first terms of $Q_n$ and different values of $t_*$ with respect to the different cases $h\neq 0$,  $h=0$ with $\beta>\beta_c$ and  $h=0$ with $\beta\leq \beta_c$. And from the second term of $Q_n$, we observe the difference between the critical case $h=0$ with $\beta=\beta_c$ and the non-critical cases.
\end{remark}

\emph{Organization of the paper}: In Section~\ref{sect: asymptotic expansion of Z as a series in n}, we give the  asymptotic expansion of the partition function $Z_{d,\beta,h}$, i.e. we prove Theorem~\ref{thm: normalizing constant expansion}. In Sections~\ref{sect: positive Littlewood-Offord exact}, we study the positive Littlewood-Offord problems for Curie-Weiss models, i.e. we prove Theorem~\ref{thm: positive Littlewood-Offord problems} and Theorem~\ref{thm: positive Littlewood-Offord problem asymptotics}.  In Section~\ref{sect: Littlewood-Offord for iid Bernoulli}, we study the Littlewood-Offord problem for Bernoulli random variables. The result is not trivial and is very important for our study of the Littlewood-Offord problem for Curie-Weiss models in the next section. In Section~\ref{sect: Littlewood-Offord exact and asymptotic}, we study the Littlewood-Offord problems for Curie-Weiss models. First, we prove Theorem~\ref{thm: Littlewood-Offord problems}. Then for even $n$, we give the asymptotic expansion of $Q_n$ in Corollary~\ref{cor: Q asymptotics even}, which means Theorem~\ref{thm: Littlewood-Offord problem asymptotics} holds for even $n$. Then for odd $n$, we give the asymptotic properties of the lower bound of $Q_n$ in Proposition~\ref{prop: Pn expansion odd}, which help us to prove Theorem~\ref{thm: Littlewood-Offord problem asymptotics} for odd $n$. In this way we complete the proof of Theorem~\ref{thm: Littlewood-Offord problem asymptotics}.

\section{Asymptotic expansion of \texorpdfstring{$Z_{d,\beta,h}$}{Z} as a series in \texorpdfstring{$n$}{n}}\label{sect: asymptotic expansion of Z as a series in n}

In this section, we prove Theorem \ref{thm: normalizing constant expansion}. First, we give some useful lemmas.

\begin{lemma}
    Let $Y$ be a standard Gaussian random variable. Then
    \begin{equation}\label{eq: Z and N(0,1)}
        Z_{d,\beta,h}=2^nE\left[\exp\left\{n\ln\cosh\left(\sqrt{\frac{2d\beta}{n}}Y+h\right)\right\}\right].
    \end{equation}
\end{lemma}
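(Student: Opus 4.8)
The plan is to recognize \eqref{eq: Z and N(0,1)} as an instance of the Hubbard--Stratonovich transformation, whose sole purpose is to linearize the quadratic term $\frac{d\beta}{n}(\sum_j\sigma_j)^2$ in the exponent so that the sum over spin configurations factorizes. The one identity I would build everything on is the Gaussian moment generating function: for any real $a$ and $Y$ a standard Gaussian,
\[
E\left[e^{aY}\right]=e^{a^2/2}.
\]

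Writing $S:=\sum_{j=1}^n\sigma_j$, I would choose $a=\sqrt{\tfrac{2d\beta}{n}}\,S$ so that $\tfrac{a^2}{2}=\tfrac{d\beta}{n}S^2$, giving
\[
\exp\left\{\frac{d\beta}{n}S^2\right\}=E\left[\exp\left\{\sqrt{\tfrac{2d\beta}{n}}\,S\,Y\right\}\right].
\]
Substituting this into the definition of $Z_{d,\beta,h}$ and interchanging the (finite) sum over $\sigma\in\{-1,1\}^n$ with the expectation yields
\[
Z_{d,\beta,h}=E\left[\sum_{\sigma\in\{-1,1\}^n}\exp\left\{\left(\sqrt{\tfrac{2d\beta}{n}}\,Y+h\right)\sum_{j=1}^n\sigma_j\right\}\right].
\]

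Next I would exploit that, for any constant $c$, the inner sum factorizes over coordinates:
\[
\sum_{\sigma\in\{-1,1\}^n}e^{c\sum_j\sigma_j}=\prod_{j=1}^n\left(e^{c}+e^{-c}\right)=(2\cosh c)^n=2^n\exp\{n\ln\cosh c\}.
\]
Applying this with $c=\sqrt{\tfrac{2d\beta}{n}}\,Y+h$ and pulling the factor $2^n$ outside the expectation produces exactly \eqref{eq: Z and N(0,1)}.

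I do not expect a serious obstacle here: the only step that formally requires care is the interchange of sum and expectation, but since the sum ranges over the finite set $\{-1,1\}^n$ this is automatic from linearity of expectation, with no integrability concern. The main thing to watch is the bookkeeping of constants---ensuring the coefficient $\sqrt{2d\beta/n}$ is picked so that $a^2/2$ reproduces $\tfrac{d\beta}{n}S^2$ precisely, and that the $2^n$ arising from the factorization matches the prefactor in the claimed identity.
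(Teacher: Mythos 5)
Your proposal is correct and is essentially identical to the paper's proof: both apply the Hubbard--Stratonovich transformation (the Gaussian moment generating function identity $E[e^{aY}]=e^{a^{2}/2}$) to linearize the quadratic term, interchange the finite sum over $\{-1,1\}^{n}$ with the expectation, and factorize the spin sum into $\left(2\cosh\left(\sqrt{2d\beta/n}\,Y+h\right)\right)^{n}=2^{n}\exp\left\{n\ln\cosh\left(\sqrt{2d\beta/n}\,Y+h\right)\right\}$. The only cosmetic difference is that the paper begins with an (unnecessary for this identity) reduction to $h\geq 0$, which your argument correctly shows is not needed.
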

\begin{proof}
  By symmetry, $Z_{d,\beta,h}=Z_{d,\beta,|h|}$. Without loss of generality, we  assume $h\geq 0$ in the following calculation. Note that \[Z_{d,\beta,h}=\sum_{\sigma\in\{-1,1\}^n}\exp\left\{\frac{d\beta}{n}(\sum_{j=1}^{n}\sigma_j)^2+h\sum_{j=1}^{n}\sigma_j\right\}.\]
By Hubbard's transformation (i.e. Laplace transform of Gaussian random variables), we have that
\[\exp\left\{\frac{d\beta}{n}(\sum_{j=1}^{n}\sigma_j)^2\right\}=E\left(\exp\left\{\sqrt{\frac{2d\beta}{n}}(\sum_{j=1}^{n}\sigma_j)Y\right\}\right),\]
where $Y$ is a standard Gaussian random variable. Hence, we have that
\begin{align*}
Z_{d,\beta,h}&=\sum_{\sigma\in\{-1,1\}^n}E\left(\exp\left\{(\sqrt{\frac{2d\beta}{n}}Y+h)(\sum_{j=1}^{n}\sigma_j)\right\}\right)\\
&=E\left(\sum_{\sigma\in\{-1,1\}^n}\exp\left\{(\sqrt{\frac{2d\beta}{n}}Y+h)(\sum_{j=1}^{n}\sigma_j)\right\}\right)\\
&=E\left(2^n\cosh^{n}(\sqrt{\frac{2d\beta}{n}}Y+h)\right)\\
&=2^nE\left[\exp\left\{n\ln\cosh\left(\sqrt{\frac{2d\beta}{n}}Y+h\right)\right\}\right].
\end{align*}
\end{proof}

Define
\begin{equation}\label{eq: Z(x)}
Z(x)=E(e^{\ln\cosh(\sqrt{2d\beta}Yx+h)/x^2}).
\end{equation}
Then, by \eqref{eq: Z and N(0,1)} we have that
\begin{equation}\label{eq: Zdbetah via Z(x)}
Z_{d,\beta,h}=2^{n}Z(1/\sqrt{n}).
\end{equation}
In order to find the asymptotic expansion of $Z_{d,\beta,h}$ as $n\to +\infty$, it suffices to find the asymptotic expansion of $Z(x)$  as  $x\to 0^+$. The asymptotic expansion of $Z(x)$ in $x$ can be obtained via Laplace's method. The method  is borrowed from \cite{ErdelyiMR0078494} and \cite{WojdyloMR2219312}.
For $x\neq 0$, by \eqref{eq: Z(x)} we have that
\begin{align}\label{eq: express Z(x) via an integral}
Z(x) 
&= \int_{-\infty}^{+\infty}e^{\ln\cosh(\sqrt{2d\beta}yx+h)/x^2}\frac{1}{\sqrt{2\pi}}e^{-y^2/2}\,\mathrm{d}y\notag\\
&\overset{t=\sqrt{2d\beta}xy+h}{=} \frac{1}{2\sqrt{\pi d\beta}|x|}\int_{-\infty}^{+\infty}e^{\varphi(t)/x^2}\,\mathrm{d}t,
\end{align}
where
\begin{equation}\label{eq: defn varphi t}
\varphi(t)=\ln\cosh t-(t-h)^2/(4d\beta).
\end{equation}

\begin{lemma}\label{lem: the solutions of the mean-field equation}
    Let $z_*$  be the maximal solution of the mean-field equation
$$z=\tanh\left(\frac{\beta}{\beta_{c}}z+|h|\right).$$
\begin{enumerate}[(1)]
\item For $h=0$ and $\beta\leq \beta_c$, we have $z_*=0$. And $0$ is the only solution of the mean-field equation.

\item For $h=0$ and $\beta>\beta_c$, we have $z_*>0$. And $z_*$, $0$ and $-z_*$ are the solutions of the mean-field equation.

\item For $h\neq 0$, we have $z_*>0$. And there exists $\beta_0>\beta_c$ such that for $\beta<\beta_0$, the mean-field equation  has only one solution $z_{*}>0$; for $\beta=\beta_0$, the mean-field equation  has a positive solution $z_{*}$ and a negative solution $z_{2}$; and for $\beta>\beta_0$, the mean-field equation  has a positive solution $z_{*}$ and two different negative solutions $z_{2}$ and $z_{3}$.
\end{enumerate}
\end{lemma}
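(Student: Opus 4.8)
The plan is to reduce the mean-field equation to the analysis of a single real function and read off the number and signs of the solutions from its graph. Since the equation depends on $h$ only through $|h|$, I take $a:=\beta/\beta_c=2d\beta\ge 0$ and $b:=|h|\ge 0$, so the equation reads $z=\tanh(az+b)$. Every solution automatically lies in $(-1,1)$ because $\tanh$ takes values there; conversely, for $z\in(-1,1)$ the equation is equivalent, after applying $\operatorname{arctanh}$, to $H(z)=b$, where
\[ H(z):=\operatorname{arctanh}(z)-az, \qquad z\in(-1,1). \]
Thus counting solutions of the mean-field equation amounts to counting intersections of the graph of the fixed, $b$-independent function $H$ with the horizontal level $b$, which decouples the roles of $\beta$ and $h$.

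Next I would record the shape of $H$. One computes $H'(z)=\frac{1}{1-z^2}-a$ and $H''(z)=\frac{2z}{(1-z^2)^2}$, so $H$ is odd with $H(0)=0$. For $a\le 1$ one has $H'(z)>0$ for every $z\neq 0$, so $H$ is a strictly increasing bijection from $(-1,1)$ onto $\mathbb{R}$, and $H(z)=b$ has exactly one solution for every $b$. For $a>1$ the derivative vanishes exactly at $\pm z_0$ with $z_0=\sqrt{1-1/a}\in(0,1)$, and the sign of $H''$ shows that $H$ increases on $(-1,-z_0)$, decreases on $(-z_0,z_0)$, and increases on $(z_0,1)$, with $H\to-\infty$ as $z\to -1^{+}$ and $H\to+\infty$ as $z\to 1^{-}$. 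It thus has local maximum value $m(a):=H(-z_0)$ and, by oddness, local minimum value $-m(a)$; a short series (or monotonicity) argument gives $m(a)>0$ for $a>1$.

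With this picture the three statements follow. For $h=0$ (so $b=0$): if $\beta\le\beta_c$ (i.e. $a\le 1$) monotonicity forces the unique solution of $H(z)=0$ to be $z=0$, proving (1); if $\beta>\beta_c$ (i.e. $a>1$) the odd function $H$ vanishes at $0$ and, on the increasing branch $(z_0,1)$ where $H$ rises from $H(z_0)=-m(a)<0$ to $+\infty$, vanishes at a unique $z_*\in(z_0,1)$, so by oddness the solutions are exactly $z_*,0,-z_*$, proving (2). For $h\neq 0$ (so $b>0$): when $a\le 1$ the single solution satisfies $H(z)=b>0=H(0)$, hence $z>0$; when $a>1$ the level $b$ meets the graph in one point (on $(z_0,1)$, positive) if $b>m(a)$, in two points (a positive one together with the double root $-z_0$) if $b=m(a)$, and in three points ($z_*>0$ on $(z_0,1)$ together with $z_2\in(-z_0,0)$ and $z_3\in(-1,-z_0)$, both negative) if $0<b<m(a)$. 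In every subcase the rightmost, hence maximal, solution $z_*$ is positive.

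Finally I would convert the threshold $b=m(a)$ into a threshold in $\beta$. Writing $a=1/(1-z_0^2)$ one checks $\frac{dm}{dz_0}=\frac{2z_0^2}{(1-z_0^2)^2}>0$, so $m$ is a strictly increasing bijection of $a\in(1,\infty)$ onto $m\in(0,\infty)$; hence for each fixed $b>0$ there is a unique $a_0>1$ with $m(a_0)=b$, and setting $\beta_0:=a_0/(2d)>\beta_c$ translates $b>m(a)$, $b=m(a)$, $0<b<m(a)$ into $\beta<\beta_0$, $\beta=\beta_0$, $\beta>\beta_0$ respectively, yielding exactly the trichotomy in (3). I expect the \emph{main obstacle} to be precisely this bifurcation bookkeeping for $h\neq 0$: verifying the strict monotonicity of $m(a)$ to obtain existence and uniqueness of $\beta_0$, and carefully tracking the signs and multiplicities of the roots across the three regimes. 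The $h=0$ cases are then routine consequences of the oddness and concavity of $H$.
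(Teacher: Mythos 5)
Your proof is correct. The paper omits the proof of this lemma entirely (it is dismissed as ``easy to prove by elementary calculation''), so there is no argument of the authors to compare against; your reduction of $z=\tanh\bigl(\tfrac{\beta}{\beta_c}z+|h|\bigr)$ to counting level crossings $H(z)=|h|$ of the single function $H(z)=\operatorname{arctanh}(z)-\tfrac{\beta}{\beta_c}z$ supplies precisely the elementary calculation that was left out. The delicate points are all handled: positivity of the local maximum $m(a)$ for $a>1$, the sign and multiplicity bookkeeping of the roots in the three regimes $|h|>m(a)$, $|h|=m(a)$, $0<|h|<m(a)$, and the strict monotonicity and surjectivity of $a\mapsto m(a)$ onto $(0,\infty)$, which gives existence and uniqueness of the bifurcation value $\beta_0>\beta_c$ in part (3).
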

\begin{proof}
    It is easy to prove by elementary calculation, so we omit it.
\end{proof}

\begin{lemma}\label{lem: the maximum point of varphi}
    Let $\varphi(t)$ be in \eqref{eq: defn varphi t} and $z_*$ be in Lemma \ref{lem: the solutions of the mean-field equation}. Let $t_*=2d\beta z_*+|h|$. Then $t_*$ is the maximum point of $\varphi(t)$.
    \begin{enumerate}[(1)]
\item For $h=0$ and $\beta\leq \beta_c$, we have $t_*=0$. And $0$ is the unique maximum point of $\varphi(t)$.

\item For $h=0$ and $\beta>\beta_c$, we have $t_*>0$. And the  maximum points of $\varphi(t)$ are $t_*$ and $-t_*$.

\item For $h\neq 0$, we have $t_*>0$. And $t_*$ is the unique maximum point of $\varphi(t)$.
\end{enumerate}
\end{lemma}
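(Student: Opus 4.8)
The plan is to reduce everything to a first- and second-derivative analysis of $\varphi$. Assuming $h\geq 0$ without loss of generality (the substitution $t\mapsto -t$ turns the problem for $h$ into the one for $-h$, which is why $t_*$ is written with $|h|$), I would first record
\[\varphi'(t)=\tanh t-\frac{t-h}{2d\beta},\qquad \varphi''(t)=\frac{1}{\cosh^2 t}-\frac{1}{2d\beta}.\]
Setting $\varphi'(t)=0$ and writing $z=\tanh t$ gives $z=\frac{t-h}{2d\beta}$, i.e. $t=2d\beta z+h$ and $z=\tanh(2d\beta z+h)=\tanh(\frac{\beta}{\beta_c}z+|h|)$ since $\frac{\beta}{\beta_c}=2d\beta$. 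Thus the critical points of $\varphi$ are in bijection with the solutions $z$ of the mean-field equation via $t=2d\beta z+|h|$, with $\operatorname{sign}(t)=\operatorname{sign}(z)$; in particular $t_*=2d\beta z_*+|h|$ is the critical point attached to the maximal solution $z_*$. Since $\ln\cosh t\sim |t|$ while $(t-h)^2/(4d\beta)$ grows quadratically, $\varphi(t)\to -\infty$ as $|t|\to\infty$, so $\varphi$ attains a global maximum, necessarily at one of these critical points.

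Next I would split according to temperature. When $\beta\leq\beta_c$ we have $\frac{1}{2d\beta}\geq 1\geq \frac{1}{\cosh^2 t}$, so $\varphi''\leq 0$ with equality at most at $t=0$; hence $\varphi'$ is strictly decreasing and $\varphi$ has a unique critical point, which is therefore the unique global maximum. This settles case (1) (the unique solution is $z_*=0$, so $t_*=0$) and the subcase $\beta\leq\beta_c$ of case (3) (where by Lemma~\ref{lem: the solutions of the mean-field equation} the unique solution is $z_*>0$). For case (2), $h=0$ makes $\varphi$ even, the critical points are $-t_*,0,t_*$, and $\varphi''(0)=1-\frac{1}{2d\beta}>0$ shows that $0$ is a strict local minimum; since the global maximum exists at a critical point and cannot sit at a strict local minimum, it is attained at $\pm t_*$, which share the same value by evenness.

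The main obstacle is case (3) with $\beta>\beta_c$, where $\varphi$ can have up to three critical points and no global concavity is available. The key device is the reflection identity: for $t>0$,
\[\varphi(t)-\varphi(-t)=\frac{(t+h)^2-(t-h)^2}{4d\beta}=\frac{th}{d\beta}>0,\]
using that $\ln\cosh$ is even. This forces every global maximizer to lie in $[0,\infty)$; moreover $\varphi'(0)=\frac{h}{2d\beta}>0$ rules out $t=0$, so every maximizer lies in $(0,\infty)$. Finally, by the sign-preserving bijection above, the critical points in $(0,\infty)$ correspond exactly to the positive solutions of the mean-field equation, and Lemma~\ref{lem: the solutions of the mean-field equation}(3) guarantees that $z_*$ is the only positive solution. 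Hence $t_*$ is the unique critical point in $(0,\infty)$, and the global maximum — which lives there — is attained only at $t_*$. This completes all three cases.
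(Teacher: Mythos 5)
Your proof is correct. The paper actually omits the proof of this lemma, saying only that it follows ``by elementary calculation and Lemma~\ref{lem: the solutions of the mean-field equation}''; your argument --- computing $\varphi'$ and $\varphi''$, identifying critical points with mean-field solutions via the sign-preserving bijection $t=2d\beta z+|h|$, using concavity for $\beta\leq\beta_c$, evenness plus $\varphi''(0)>0$ for $h=0$, $\beta>\beta_c$, and the reflection identity $\varphi(t)-\varphi(-t)=th/(d\beta)$ to handle $h\neq 0$, $\beta>\beta_c$ --- is precisely a complete version of that intended elementary approach, with no gaps.
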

\begin{proof}
    It is very easy to prove by elementary calculation and Lemma \ref{lem: the solutions of the mean-field equation}, so we omit it.
\end{proof}

In the following, we prove Theorem \ref{thm: normalizing constant expansion} in several subsections with respect to the different phases of the Curie-Weiss models.

\subsection{High temperature case}
In this subsection, we assume $h=0$ and $\beta<\beta_c$. We aim to find the expansion of $Z(x)$ at $x=0$. Hence,  let $x\in (0,0.01)$. Since $h=0$, by \eqref{eq: express Z(x) via an integral} and \eqref{eq: defn varphi t} we have
\begin{equation*}
Z(x)=\frac{1}{\sqrt{\pi d\beta}x}\int_0^{+\infty}e^{\varphi(t)/x^2}\,\mathrm{d}t,
\end{equation*}
where $\varphi(t)=\ln\cosh t-\frac{t^2}{4d\beta}$. By Lemma \ref{lem: the maximum point of varphi}, $0$ is the unique maximum point of $\varphi(t)$. Then, we truncate $Z(x)$ as follows:
\begin{equation}\label{eq: truncation of Z(x) subcritical 01}
Z(x)=Z^{<}(x)+R^{<}(x),
\end{equation}
where
\begin{equation}\label{eq: truncation of Z(x) subcritical 02}
Z^{<}(x)=\frac{1}{\sqrt{\pi d\beta}x}\int_{0}^{-x\ln x}e^{\varphi(t)/x^2}\,\mathrm{d}t \quad \text{and}\quad R^{<}(x)=\frac{1}{\sqrt{\pi d\beta}x}\int_{-x\ln x}^{+\infty}e^{\varphi(t)/x^2}\,\mathrm{d}t.
\end{equation}

First, we estimate $R^{<}(x)$. Note that $\cosh t\leq e^{t^2/2}$ for all $t\in\mathbb{R}$. We have
\[\varphi(t)=\ln\cosh t-\frac{t^2}{4d\beta}\leq \frac{t^2}{2}(1-\frac{1}{2d\beta}).\]
Then, we have that
\begin{align}\label{eq: upper bounds for check R from Z}
R^{<}(x)&\leq\frac{1}{\sqrt{\pi d\beta}x}\int_{-x\ln x}^{+\infty}e^{\frac{1-1/(2d\beta)}{2}(t/x)^2}\,\mathrm{d}t\notag\\
&\xlongequal{t=xs}\frac{1}{\sqrt{\pi d\beta}}\int_{-\ln x}^{+\infty}e^{\frac{1-1/(2d\beta)}{2}s^2}\,\mathrm{d}s\notag\\
&\leq Ce^{-c(\ln x)^2}
\end{align}
for constants $0<c,C<+\infty$. Hence, as $x\to 0$, $R^{<}(x)$ is much smaller than  $x^{M}$  for any $M\geq 0$.

Next, we give the expansion of $Z^{<}(x)$. For sufficiently small $t$, the Taylor expansion of $\varphi(t)$ is
\[\varphi(t)=\sum_{p\geq 2}a_pt^p\]
where
\begin{equation}\label{eq: ap <}
a_2=-(1-2d\beta)/(4d\beta)<0,\quad a_{2p+1}=0,\quad p\geq 1.
\end{equation}
Hence, for any integral $M\geq 2$, we have that
\begin{align}\label{eq: Z< after expansion}
    Z^{<}(x)&=\frac{1}{\sqrt{\pi d\beta}x}\int_{0}^{-x\ln x}e^{\sum_{p\geq 2}a_pt^p/x^2}\,\mathrm{d}t\notag\\
    &\xlongequal{t=xs}\frac{1}{\sqrt{\pi d\beta}}\int_{0}^{-\ln x}e^{a_{2}s^2}e^{\sum_{p=2}^{+\infty}a_{2p}s^{2p}x^{2p-2}}\,\mathrm{d}t\notag\\
    &=\frac{1}{\sqrt{\pi d\beta}}\int_{0}^{-\ln x}e^{a_{2}s^2}e^{\sum_{p=1}^{+\infty}a_{2(p+1)}s^{2(p+1)}x^{2p}}\,\mathrm{d}t\notag\\
    &=\frac{1}{\sqrt{\pi d\beta}}\int_{0}^{-\ln x}e^{a_{2}s^2}(1+\sum_{p=1}^{M}b_p^{<}(s)x^{2p}+r_{M}^{<}(s,x))\,\mathrm{d}t,
\end{align}
where
\begin{equation}\label{eq: expr for bp <}
b_p^{<}(s)=\sum_{k=1}^{p}\sum_{\substack{m_1,m_2,\ldots,m_{k}\geq 1\\ m_1+m_2+\cdots+m_{k}=p}}a_{2(m_1+1)}a_{2(m_2+1)}\cdots a_{2(m_k+1)}s^{2p+2k}/k!
\end{equation}
and there exists $0<C=C(M)<+\infty$ such that for $|s|<-\ln x$, we have that
\begin{equation*}
|r_{M}^{<}(s,x)|\leq Cx^{2M+2}(-\ln x)^{C}=o(x^{2M}).
\end{equation*}
Note that
\begin{align}\label{eq: truncated integral ea2s2s2p}
\int_{0}^{-\ln x}e^{a_2s^2}s^{2p}\,\mathrm{d}s&=\int_{0}^{+\infty}e^{a_2s^2}s^{2p}\,\mathrm{d}s+o(x^{2M})\notag\\
&=\frac{1}{2}\Gamma\left(\frac{2p+1}{2}\right)(-a_2)^{-\frac{2p+1}{2}}+o(x^{2M}).
\end{align}
Combining \eqref{eq: truncation of Z(x) subcritical 01}, \eqref{eq: truncation of Z(x) subcritical 02}, \eqref{eq: upper bounds for check R from Z}, \eqref{eq: ap <}, \eqref{eq: Z< after expansion}, \eqref{eq: expr for bp <} and \eqref{eq: truncated integral ea2s2s2p}, we obtain that for $M\geq 1$,
\begin{equation*}
Z(x)=\sum_{p=0}^{M}e_p^{<}x^{2p}+o(x^{2M}),
\end{equation*}
where $e_0^{<}=(1-2d\beta)^{-\frac{1}{2}}$ and for $p\geq 1$,
\begin{align}\label{eq: subcritical cp}
e_p^{<}&=\sum_{k=1}^{p}(2p+2k-1)!!(2d\beta)^{p+k}(1-2d\beta)^{-\frac{2p+2k+1}{2}}/k!\notag\\
&\quad \times\sum_{\substack{m_1,m_2,\ldots,m_{k}\geq 1\\ m_1+m_2+\cdots+m_{k}=p}}a_{2(m_1+1)}a_{2(m_2+1)}\cdots a_{2(m_k+1)}.
\end{align}
Therefore, by \eqref{eq: Zdbetah via Z(x)}, we obtain
\[Z_{d,\beta,0}=2^{n}(\sum_{p=0}^{M}e_p^{<}n^{-p}+o(n^{-M})),\]
where $e_0^{<}=(1-2d\beta)^{-\frac{1}{2}}$ and for $p\geq 1$, $e_p^{<}$ is given by \eqref{eq: subcritical cp}. Thus,  Theorem \ref{thm: normalizing constant expansion}(1) holds.

\subsection{Critical case}\label{subsubsect: critical}
In this subsection, we assume $h=0$ and $\beta=\beta_c$. We still aim to find the expansion of $Z(x)$ at $x=0$. Hence,  let $x\in (0,0.01)$. Since $h=0$ and $\beta=\beta_c$, by \eqref{eq: express Z(x) via an integral} and \eqref{eq: defn varphi t} we  have
\begin{equation*}
Z(x)=\frac{2}{\sqrt{2\pi}x}\int_0^{+\infty}e^{\varphi(t)/x^2}\,\mathrm{d}t,
\end{equation*}
where $\varphi(t)=\ln\cosh t-\frac{t^2}{2}$. By Lemma \ref{lem: the maximum point of varphi}, $0$ is the unique maximum point of $\varphi(t)$. Then, we truncate $Z(x)$ as follows:
\begin{equation}\label{eq: truncation of Z(x) critical 01}
Z(x)=Z^{c}(x)+R^{c}(x),
\end{equation}
where
\begin{equation}\label{eq: truncation of Z(x) critical 02}
Z^{c}(x)=\frac{2}{\sqrt{2\pi}x}\int_{0}^{\sqrt{-x\ln x}}e^{\varphi(t)/x^2}\,\mathrm{d}t \,\,\text{and} \,\,R^{c}(x)=\frac{2}{\sqrt{2\pi}x}\int_{\sqrt{-x\ln x}}^{+\infty}e^{\varphi(t)/x^2}\,\mathrm{d}t.
\end{equation}

For small enough $t$, the Taylor expansion of $\varphi(t)$ is
\begin{equation}\label{eq: taylor varphi critical 01}
\varphi(t)=\sum_{p=2}^{\infty}a_pt^p
\end{equation}
where
\begin{equation}\label{eq: taylor varphi critical 02}
a_2=0,\,\,a_4=-\frac{1}{12},\,\,a_6=\frac{1}{45},\,\,a_{2p+1}=0,\,\, p\geq 1.
\end{equation}
Thus there exists $\delta>0$ and $\kappa>0$ such that for $|t|\leq\delta$, $\varphi(t)<-\kappa t^{4}$. Since $t^2/2-\ln\cosh t>0$ for $t\neq 0$ and $\lim_{t\to\infty}\frac{t^2/2-\ln\cosh t}{|t|}=+\infty$, for the same $\delta$, there exists $\widetilde{\kappa}>0$ such that for $|t|>\delta$,  $\varphi(t)<-\widetilde{\kappa}|t|$. Therefore, we have
\begin{equation}\label{eq: upper bounds for tilde R from Z}
R^{c}(x)\leq \frac{2}{\sqrt{2\pi}x}\left(\int_{\sqrt{-x\ln x}}^{\delta}e^{-\kappa t^4/x^2}\,\mathrm{d}t+\int_{\delta}^{+\infty}e^{-\widetilde{\kappa}t/x^2}\,\mathrm{d}t\right)\leq Ce^{-c(\ln x)^2}
\end{equation}
for some constants $0<c,C<+\infty$. Here, as $x\to 0$, $R^{c}(x)$ is much smaller than  $x^{M}$  for any $M\geq 1$.

For the main part $Z^{c}(x)$, by \eqref{eq: taylor varphi critical 01} and \eqref{eq: taylor varphi critical 02}  we have that
\begin{align}\label{eq: critical expansion of tilde Z via integral}
Z^{c}(x) &=\frac{2}{\sqrt{2\pi}x}\int_{0}^{\sqrt{-x\ln x}}e^{(\sum_{p=2}^{+\infty}a_{2p}t^{2p})/x^2}\,\mathrm{d}t\notag\\
&\xlongequal{t=s\sqrt{x}} \frac{2}{\sqrt{2\pi x}}\int_{0}^{\sqrt{-\ln x}}e^{-\frac{1}{12}s^{4}}e^{\sum_{p=3}^{+\infty}a_{2p}s^{2p}x^{p-2}}\,\mathrm{d}s\notag\\
&=\frac{2}{\sqrt{2\pi x}}\int_{0}^{\sqrt{-\ln x}}e^{-\frac{1}{12}s^{4}}e^{\sum_{p=1}^{+\infty}a_{2p+4}s^{2p+4}x^{p}}\,\mathrm{d}s\notag\\
&=\frac{2}{\sqrt{2\pi x}} \int_{0}^{\sqrt{-\ln x}}e^{-\frac{1}{12}s^{4}}(1+\sum_{p=1}^{M}b_{p}^{c}(s)x^{p}+r_{M}^{c}(s,x))\,\mathrm{d}s,
\end{align}
where
\begin{equation}\label{eq: expr for bp c}
b_p^{c}(s)=\sum_{k=1}^{p}\sum_{\substack{m_1,m_2,\ldots,m_{k}\geq 1\\ m_1+m_2+\cdots+m_{k}=p}}a_{2m_1+4}a_{2m_2+4}\cdots a_{2m_k+4}s^{2p+4k}/k!
\end{equation}
and there exists $0<C<+\infty$ such that for $|s|<\sqrt{-\ln x}$, we have that
\[|r_{M}^{c}(s,x)|\leq Cx^{M+1}(-\ln x)^{C}=o(x^{M}).\]
Note that for non-negative integer $p$,
\begin{align}\label{eq: truncated e-s4s2p integral}
\int_{0}^{\sqrt{-\ln x}}e^{-\frac{1}{12}s^{4}}s^{2p}\,\mathrm{d}s &=\int_{0}^{+\infty}e^{-\frac{1}{12}s^{4}}s^{2p}\,\mathrm{d}s+o(x^{M})\notag\\
&=\frac{1}{2}\left(\frac{3}{4}\right)^{\frac{1}{4}}12^{\frac{p}{2}}\Gamma(\frac{p}{2}+\frac{1}{4})+o(x^{M}).
\end{align}
Hence, combining \eqref{eq: truncation of Z(x) critical 01}, \eqref{eq: truncation of Z(x) critical 02}, \eqref{eq: upper bounds for tilde R from Z}, \eqref{eq: critical expansion of tilde Z via integral}, \eqref{eq: expr for bp c} and \eqref{eq: truncated e-s4s2p integral}, we obtain that for $M\geq 1$,
\begin{equation*}
Z(x)=\sum_{p=0}^{M}e_p^{c}x^{p-\frac{1}{2}}+o(x^{M-\frac{1}{2}}),
\end{equation*}
where $e_0^{c}=\frac{1}{\sqrt{2\pi}}\left(\frac{3}{4}\right)^{\frac{1}{4}}\Gamma(\frac{1}{4})$ and for $p\geq 1$,
\begin{align}\label{eq: critical ep}
e_p^{c}&=\frac{1}{\sqrt{2\pi}}\left(\frac{3}{4}\right)^{\frac{1}{4}}\sum_{k=1}^{p}12^{\frac{p}{2}+k}\Gamma(\frac{p}{2}+k+\frac{1}{4})/k!\notag\\
&\quad\times\sum_{\substack{m_1,m_2,\ldots,m_{k}\geq 1\\ m_1+m_2+\cdots+m_{k}=p}}a_{2m_1+4}a_{2m_2+4}\cdots a_{2m_k+4}.
\end{align}
Therefore, by \eqref{eq: Zdbetah via Z(x)}, we have that
\[Z_{d,\beta,0}=2^{n}(\sum_{p=0}^{M}e_p^{c}n^{-p/2+1/4}+o(n^{-M/2+1/4})),\]
where $e_0^{c}=\frac{1}{\sqrt{2\pi}}\left(\frac{3}{4}\right)^{\frac{1}{4}}\Gamma(\frac{1}{4})$ and for $p\geq 1$, $e_p^{c}$ is given by \eqref{eq: critical ep}. In particular, $e_1^{c}=\frac{1}{5\sqrt{\pi}}3^{\frac{3}{4}}\Gamma(\frac{3}{4})$. Thus,  Theorem \ref{thm: normalizing constant expansion}(2) holds.

\begin{remark}
    The truncation of $Z(x)$ (i.e. \eqref{eq: truncation of Z(x) critical 01} and \eqref{eq: truncation of Z(x) critical 02}) in the critical case is different from that (i.e. \eqref{eq: truncation of Z(x) subcritical 01} and \eqref{eq: truncation of Z(x) subcritical 02}) in the high temperature case because from \eqref{eq: taylor varphi critical 02} we know that $a_2=0, a_4\neq 0$ in the critical case, while in \eqref{eq: ap <} we see $a_2\neq 0$ in the high temperature case.
\end{remark}

\subsection{Low temperature case}\label{subsection: Z low temperature case}
In this subsection, we assume $h=0$ and $\beta>\beta_c$. Again, we aim to find the expansion of $Z(x)$ at $x=0$. Hence, let $x\in (0,0.01)$. Since $h=0$, by \eqref{eq: express Z(x) via an integral} and \eqref{eq: defn varphi t} we have
\begin{equation*}
Z(x)=\frac{1}{\sqrt{\pi d\beta}x}\int_0^{+\infty}e^{\varphi(t)/x^2}\,\mathrm{d}t,
\end{equation*}
where $\varphi(t)=\ln\cosh t-\frac{t^2}{4d\beta}$. By Lemma \ref{lem: the maximum point of varphi}, the maximum points of $\varphi(t)$ are $t_*$ and $-t_*$. Then, we truncate $Z(x)$ as follows:
\begin{equation}\label{eq: truncation Z(x) low temperature 01}
Z(x)=Z^{>}(x)+R^{>}(x),
\end{equation}
where
\begin{align}\label{eq: truncation Z(x) low temperature 02}
Z^{>}(x)&=\frac{1}{\sqrt{\pi d\beta}x}\int_{t_{*}+x\ln x}^{t_{*}-x\ln x}e^{\frac{\varphi(t)}{x^2}}\,\mathrm{d}t,\\
R^{>}(x)&=\frac{1}{\sqrt{\pi d\beta}x}\left(\int_{0}^{t_{*}+x\ln x}e^{\frac{\varphi(t)}{x^2}}\,\mathrm{d}t+\int_{t_{*}-x\ln x}^{+\infty}e^{\frac{\varphi(t)}{x^2}}\,\mathrm{d}t\right).\notag
\end{align}

For $t$ sufficiently close to $t_{*}$, the Taylor expansion of $\varphi(t)$ is
\begin{equation}\label{eq: taylor varphi low temperature}
\varphi(t)=\varphi(t_{*})+\sum_{p=2}^{+\infty}a_{p}(t-t_{*})^p,
\end{equation}
where
\begin{equation}\label{eq: 2f_2 and func of t star}
2a_{2}=\frac{1}{\cosh^2 t_{*}}-\frac{1}{2d\beta}.
\end{equation}
By Lemma \ref{lem: the solutions of the mean-field equation} and Lemma \ref{lem: the maximum point of varphi} we know that $t_*>0$ is the solution of equation $\tanh t=t/(2d\beta)$. Together with $\sinh x>x$ for $x>0$, we have that
\begin{equation}\label{eq: supercritical 2f_2 < 0}
2a_{2}=\frac{1}{\cosh^2 t_{*}}-\frac{\tanh t_{*}}{t_{*}}=\frac{1}{2t_{*}\cosh^2 t_{*}}(2t_{*}-\sinh(2t_{*}))<0.
\end{equation}
Hence, by \eqref{eq: taylor varphi low temperature} and \eqref{eq: supercritical 2f_2 < 0} there exists $\delta\in(0,t_{*}/2)$ and $\kappa>0$ such that for $t\in [t_{*}-\delta,t_{*}+\delta]$, we have that $\varphi(t)<\varphi(t_{*})-\kappa(t-t_{*})^{2}$. Besides, there exists $\widetilde{\kappa}>0$ such that $\varphi(t)<\varphi(t_{*})-\widetilde{\kappa}(t-t_{*})$ for $t>t_{*}+\delta$ and $\varphi(t)<\varphi(t_{*})-\tilde{\kappa}$ for $t\in[0,t_{*}-\delta]$. Therefore, there exists $\widetilde{\delta}, c, C>0$ such that for all $0<x<\widetilde{\delta}<1$, we have that $|x\ln x|<\delta$, and
\begin{align}\label{eq: R> negligible}
R^{>}(x)e^{-\varphi(t_{*})/x^2}&\leq\frac{1}{\sqrt{\pi d\beta}x}\int_{0}^{t_{*}-\delta}e^{-\widetilde{\kappa}/x^2}\,\mathrm{d}t+\frac{1}{\sqrt{\pi d\beta}x}\int_{t_{*}-\delta}^{t_{*}+x\ln x}e^{-\kappa(t-t_{*})^2/x^2}\,\mathrm{d}t\notag\\
&\quad +\frac{1}{\sqrt{\pi d\beta}x}\int_{t_{*}-x\ln x}^{t_{*}+\delta}e^{-\kappa(t-t_{*})^2/x^2}\,\mathrm{d}t\notag\\
&\quad +\frac{1}{\sqrt{\pi d\beta}x}\int_{t_{*}+\delta}^{+\infty}e^{-\widetilde{\kappa}(t-t_{*})/x^2}\,\mathrm{d}t\notag\\
&\leq Ce^{-c(\ln|x|)^2}.
\end{align}
Hence, as $x\to 0$, $R^{>}(x)e^{-\varphi(t_{*})/x^2}$ is much smaller than  $x^{M}$  for any $M\geq 1$.

For the main part $Z^{>}(x)$, by \eqref{eq: taylor varphi low temperature} we have that
\begin{align}\label{eq: supercritical expansion of hat Z via integral}
Z^{>}(x)&=\frac{1}{\sqrt{\pi d\beta}x}\int_{t_{*}+x\ln x}^{t_{*}-x\ln x}e^{(\varphi(t_{*})+\sum_{p=2}^{+\infty}a_{p}(t-t_{*})^p)/x^2}\,\mathrm{d}t\notag\\
&\xlongequal{s=(t-t_*)/x} \frac{1}{\sqrt{\pi d\beta}}e^{\varphi(t_{*})/x^2}\int_{\ln x}^{-\ln x}e^{a_2 s^2}e^{\sum_{p\geq 3}a_ps^px^{p-2}}\,\mathrm{d}s\notag\\
&= \frac{1}{\sqrt{\pi d\beta}}e^{\varphi(t_{*})/x^2}\int_{\ln x}^{-\ln x}e^{a_2 s^2}e^{\sum_{p\geq 1}a_{p+2}s^{p+2}x^{p}}\,\mathrm{d}s\notag\\
&= \frac{1}{\sqrt{\pi d\beta}}e^{\varphi(t_{*})/x^2}\int_{\ln x}^{-\ln x}e^{a_2 s^2}(1+\sum_{p=1}^{M}b_p^{>}(s)x^{p}+r_{M}^{>}(s,x))\,\mathrm{d}s,
\end{align}
where
\begin{equation}\label{eq: gps}
b_p^{>}(s)=\sum_{k=1}^{p}\sum_{\substack{m_1,m_2,\ldots,m_{k}\geq 1\\ m_1+m_2+\cdots+m_{k}=p}}a_{m_1+2}a_{m_2+2}\cdots a_{m_k+2}s^{p+2k}/k!
\end{equation}
and there exists $0<C<+\infty$ such that for $|s|<-\ln x$, we have that
\begin{equation}\label{eq: upper bounds for hat RMsx}
|r_{M}^{>}(s,x)|\leq Cx^{M+1}(-\ln x)^{C}=o(x^{M}).
\end{equation}
Note that
\begin{align}\label{eq: truncated integral ef2s2sp}
\int_{\ln x}^{-\ln x}e^{a_2s^2}s^{p}\,\mathrm{d}s&=\int_{-\infty}^{+\infty}e^{a_2s^2}s^{p}\,\mathrm{d}s+o(x^{M})\\
&=1_{p\text{ is even}}\times \Gamma\left(\frac{p+1}{2}\right)(-a_2)^{-\frac{p+1}{2}}+o(x^{M}).
\end{align}
Hence, combining \eqref{eq: truncation Z(x) low temperature 01}, \eqref{eq: truncation Z(x) low temperature 02}, \eqref{eq: R> negligible}, \eqref{eq: supercritical expansion of hat Z via integral}, \eqref{eq: gps}, \eqref{eq: upper bounds for hat RMsx} and \eqref{eq: truncated integral ef2s2sp}, for each $M\geq 1$, we have that
\begin{equation*}
Z(x)=e^{\varphi(t_*)/x^2}(\sum_{p=0}^{M}e_p^{>}x^{2p}+o(x^{2M})),
\end{equation*}
where $e_0^{>}=(-d\beta a_2)^{-1/2}$ and for $p\geq 1$,
\begin{align}\label{eq: hp}
e_p^{>}&=(-d\beta a_2)^{-1/2}\sum_{k=1}^{2p}\frac{(2p+2k-1)!!}{k!}(-2a_2)^{-(p+k)}\notag\\
&\quad \times\sum_{\substack{m_1,m_2,\ldots,m_{k}\geq 1\\ m_1+m_2+\cdots+m_{k}=2p}}a_{m_1+2}a_{m_2+2}\cdots a_{m_k+2}.
\end{align}
Therefore, by \eqref{eq: Zdbetah via Z(x)} and \eqref{eq: 2f_2 and func of t star}, we have
\[Z_{d,\beta,0}=2^ne^{n\varphi(t_*)}(\sum_{p=0}^{M}e_p^{>}n^{-p}+o(n^{-M})),\]
where $\varphi(t_*)=\ln\cosh t_{*}-t_{*}^2/(4d\beta)$, $e_0^{>}=(-d\beta a_2)^{-\frac{1}{2}}=2^{\frac{1}{2}}(d\beta)^{-\frac{1}{2}}\left(\frac{1}{2d\beta}-\frac{1}{\cosh^2 t_{*}}\right)^{-\frac{1}{2}}$ and for $p\geq 1$, $e_p^{>}$ is given by \eqref{eq: hp}. Thus, Theorem \ref{thm: normalizing constant expansion}(3) holds.

\begin{remark}\label{rem: supercritical connection with the free energy2}
In \cite[Chapter~2]{FriedliVelenikMR3752129}, the function
\[f_{CW}(m)=-d\beta m^2+\frac{1-m}{2}\ln\frac{1-m}{2}+\frac{1+m}{2}\ln\frac{1+m}{2}, \,\,m\in[-1,1]\]
is called the free energy functional, which is very important for the Curie-Weiss models. By Lemma \ref{lem: the solutions of the mean-field equation} and Lemma \ref{lem: the maximum point of varphi} we get  $z_*=\tanh\left(2d\beta z_*\right)$ and $t_{*}=2d\beta z_{*}$. Hence, by some simple calculation we can get the connection between the free energy functional $f_{CW}(m)$ and the function $\varphi(t)$ as follows:
\[-f_{CW}(z_{*})=\ln 2+\varphi(t_*).\]
Note that $1/\cosh^2(t_*)=1-\tanh^2(t_*)=1-\tanh^2(2d\beta z_{*})=1-z^2_*$. So
 \[e_0^{>}=2^{1/2}(d\beta)^{-1/2}\left(\frac{1}{2d\beta}-\frac{1}{\cosh^2 t_{*}}\right)^{-\frac{1}{2}}=\left(\frac{1}{4}-\frac{d\beta}{2}(1-z^2_*)\right)^{-1/2}.\]
\end{remark}

\subsection{With non-zero external field}

Because $Z_{d,\beta,h}=Z_{d,\beta,|h|}$, we assume $h>0$ in this subsection. $Z(x)$ and $\varphi(t)$ are the same as in \eqref{eq: express Z(x) via an integral} and \eqref{eq: defn varphi t}. Again, we aim to find the expansion of $Z(x)$ at $x=0$. Hence, we restrict $x\in (0.001)$. By Lemma \ref{lem: the maximum point of varphi}, the maximal point of $\varphi(t)$ is $t_*$. So  by \eqref{eq: express Z(x) via an integral} and \eqref{eq: defn varphi t} we truncate $Z(x)$ as follows:
\begin{equation*}\label{eq: truncate Z with external field 01}
    Z(x)=Z^{h\neq 0}(x)+R^{h\neq 0}(x),
\end{equation*}
where
\begin{equation*}\label{eq: truncate Z with external field 02}
    Z^{h\neq 0}(x)=\frac{1}{2\sqrt{\pi d\beta}x}\int_{t_*+x\ln x}^{t_*-x\ln x}e^{\varphi(t)/x^2}\,\mathrm{d}t,\text{ and }R^{h\neq 0}(x)=Z(x)-Z^{h\neq 0}(x).
\end{equation*}

For $t$ sufficiently close to $t_*$, the Taylor expansion of $\varphi(t)$ is
\begin{equation}\label{eq: taylor varphi with external field}
\varphi(t)=\varphi(t_{*})+\sum_{p=2}^{+\infty}a_{p}(t-t_{*})^p,
\end{equation}
where
\begin{equation*}\label{eq: h > 0 2f_2 and func of t star}
2a_2=\frac{1}{\cosh^2 t_{*}}-\frac{1}{2d\beta}.
\end{equation*}
By Lemma \ref{lem: the solutions of the mean-field equation} and Lemma \ref{lem: the maximum point of varphi} we know that $t_*>0$ is the solution of equation $\tanh t=(t-h)/(2d\beta)$. Together with $\sinh x>x$ for $x>0$, we have that
\begin{equation*}
2a_{2}=\frac{1}{\cosh^2 t_{*}}-\frac{\tanh t_*}{t_*-h}<\frac{1}{\cosh^2 t_{*}}-\frac{\tanh t_*}{t_*}=\frac{1}{2t_{*}\cosh^2 t_{*}}(2t_{*}-\sinh(2t_{*}))<0.
\end{equation*}
Hence,  following the similar calculation in Subsection \ref{subsection: Z low temperature case} and by \eqref{eq: Zdbetah via Z(x)}, we obtain that
\begin{equation*}
    Z_{d,\beta,h}=2^ne^{n\varphi(t_*)}(\sum_{p=0}^{M}e_p^{h\neq 0}n^{-p}+o(n^{-M})).
\end{equation*}
Here, $\varphi(t_{*})=\ln\cosh t_{*}-\frac{(t_{*}-|h|)^2}{4d\beta}$,
$e_0^{h\neq 0}=(-4d\beta a_2)^{-\frac{1}{2}}=(2d\beta)^{-\frac{1}{2}}\left(\frac{1}{2d\beta}-\frac{1}{\cosh^2 t_{*}}\right)^{-\frac{1}{2}}$
and for $p\geq 1$,
\begin{align}\label{eq: h neq ep}
e_p^{h\neq 0} &= (-4d\beta a_2)^{-1/2}\sum_{k=1}^{2p}\frac{(2p+2k-1)!!}{k!}(-2a_2)^{-(p+k)}\notag\\
&\quad\times\sum_{\substack{m_1,m_2,\ldots,m_{k}\geq 1\\ m_1+m_2+\cdots+m_{k}=2p}}a_{m_1+2}a_{m_2+2}\cdots a_{m_k+2},
\end{align}
where $(a_p)_{p\geq 2}$ is given by \eqref{eq: taylor varphi with external field}. Thus Theorem \ref{thm: normalizing constant expansion}(4) holds.

\begin{remark}\label{rem: h neq 0 connection with the free energy}
Fix $h>0$. Recall the free energy functional
\begin{equation}\label{eq: free energy functional}
  f_{CW}(m)=-d\beta m^2-hm+\frac{1+m}{2}\ln\frac{1+m}{2}+\frac{1-m}{2}\ln\frac{1-m}{2}, \,\,m\in[-1,1]
\end{equation}
in \cite[Chapter~2]{FriedliVelenikMR3752129}. By Lemma \ref{lem: the solutions of the mean-field equation} and Lemma \ref{lem: the maximum point of varphi} we get  $z_*=\tanh\left(2d\beta z_*+h\right)$ and $t_{*}=2d\beta z_{*}+h$. Hence, by the similar calculation as in  Remark~\ref{rem: supercritical connection with the free energy2} we have that
\[-f_{CW}(z_*)=\varphi(t_*)+\ln 2, \,\,e_0^{h\neq 0}=(1-2d\beta(1-z_{*}^2))^{-\frac{1}{2}}.\]
\end{remark}

\begin{remark}
For $h=0$ and $\beta>\beta_c$ i.e. the low temperature case in Subsection \ref{subsection: Z low temperature case}, there are two maximum points $t_{*}$ and $-t_{*}$ for the function $\varphi(t)$. The contribution to $Z_{d,\beta,h}$ of the integral near these two maximum points are the same. While for $h\neq 0$, i.e. the non-zero external field case, the maximum point $t_{*}$ of $\varphi(t)$ is unique. Therefore, compared with $e_{p}^{h\neq 0}$ in \eqref{eq: h neq ep}, there is a factor $2$ in $e_p^{>}$ in \eqref{eq: hp}.
\end{remark}

\section{Positive Littlewood-Offord problems for Curie-Weiss models}\label{sect: positive Littlewood-Offord exact}

We consider the positive Littlewood-Offord problems for Curie-Weiss models in this section, i.e. we prove Theorem \ref{thm: positive Littlewood-Offord problems} and Theorem~\ref{thm: positive Littlewood-Offord problem asymptotics} in this section.

\subsection{Proof of Theorem \ref{thm: positive Littlewood-Offord problems}}

Note that the possible values of $\sum_{i=1}^{n}\varepsilon_i$ lie in a arithmetic progression with common difference $2$. Hence, for $x\in\mathbb{R}$, there is a unique $y$ in that arithmetic progression such that $\sum_{i=1}^{n}\varepsilon_i\in(x-1,x+1)$ if and only if $\sum_{i=1}^{n}\varepsilon_i=y$. Hence, the second equality in \eqref{eq: Q_n^+ equivalent expression} holds, i.e.,
\begin{equation*}
    \sup_{x\in\mathbb{R}}P(\sum^n_{i=1}\varepsilon_i\in(x-1,x+1))= \sup_{x\in\mathbb{R}}P(\sum^n_{i=1}\varepsilon_i=x).
\end{equation*}

Fix $n\geq 1$, $v_1,v_2,\ldots,v_n\geq 1$ and $x\in\mathbb{R}$. Define
\[\Sigma_x=\{\sigma\in\{-1,1\}^n:\sigma_1v_1+\sigma_2v_2+\cdots+\sigma_nv_n\in(x-1,x+1)\}.\]
For a configuration $\sigma$ of spins, let
\begin{equation}\label{eq: spin positive}
   \mathcal{P}(\sigma)=\{i=1,2,\ldots,n:\sigma_i=1\},\,\, \mathcal{P}(\varepsilon)=\{i=1,2,\dots,n: \varepsilon_i=1\}.
\end{equation}
Then $\mathcal{P}(\sigma)$ is the set of vertices with the positive spin, and we can see that $\mathcal{P}(\sigma)$ and $\sigma$ determine each other. And $\mathcal{P}(\varepsilon)=\{i=1,2,\dots,n: \varepsilon_i=1\}$ is random.

Denote by $\mathcal{A}_x=\{\mathcal{P}(\sigma):\sigma\in\Sigma_x\}$. Since $v_1,v_2,\ldots,v_n\geq 1$, for distinct $\mathcal{P}(\sigma)$ and $\mathcal{P}(\sigma')$ in $\mathcal{A}_x$, neither $\mathcal{P}(\sigma)\subset \mathcal{P}(\sigma')$, nor $\mathcal{P}(\sigma')\subset\mathcal{P}(\sigma)$. Indeed, $\mathcal{P}(\sigma)\subset \mathcal{P}(\sigma')$ implies that
\[\sum^n_{k=1}\sigma'_kv_k-\sum^n_{k=1}\sigma_kv_k=\sum_{k\in \mathcal{P}(\sigma')\setminus \mathcal{P}(\sigma)}2v_k\geq 2.\]
Hence, $\mathcal{P}(\sigma)$ and $\mathcal{P}(\sigma')$ cannot both belong to $\mathcal{A}_{x}$. Similarly, we cannot have $\mathcal{P}(\sigma')\subset\mathcal{P}(\sigma)$.

Such a system $\mathcal{A}_x$ without inclusion for different sets is called a \emph{Sperner system} or an \emph{anti-chain}. We refer to \cite[Definition~7.1]{TaoVuMR2289012} for the definition of anti-chains. By LYM inequality (see \cite[Lemma~7.2]{TaoVuMR2289012}), we have that
\begin{equation*}
1\geq \sum_{\mathcal{P}(\sigma)\in\mathcal{A}_x}1/\binom{n}{|\mathcal{P}(\sigma)|},
\end{equation*}
where $|\mathcal{P}(\sigma)|$ is the cardinality of $\mathcal{P}(\sigma)$. Then, we have that
\begin{align*}
1&\geq \sum_{\mathcal{P}(\sigma)\in\mathcal{A}_x}P(\varepsilon=\sigma)\frac{1}{\binom{n}{|\mathcal{P}(\sigma)|}P(\varepsilon=\sigma)}\\
&\geq \frac{1}{\max_{\sigma\in\Sigma_x}\binom{n}{|\mathcal{P}(\sigma)|}P(\varepsilon=\sigma)}\sum_{\mathcal{P}(\sigma)\in\mathcal{A}_x}P(\varepsilon=\sigma).
\end{align*}
Hence, we have that
\begin{multline*}
P(\varepsilon_1v_1+\varepsilon_2v_2+\dots+\varepsilon_nv_n\in(x-1,x+1))=\sum_{\mathcal{P}(\sigma)\in\mathcal{A}_x}P(\varepsilon=\sigma)\\
\leq\max_{\sigma\in\Sigma_x}\binom{n}{|\mathcal{P}(\sigma)|}P(\varepsilon=\sigma)\leq\max_{\sigma\in\{-1,1\}^n}\binom{n}{|\mathcal{P}(\sigma)|}P(\varepsilon=\sigma).
\end{multline*}
Since $\varepsilon$ is exchangeable, we have that
\[P(|\mathcal{P}(\varepsilon)|=|\mathcal{P}(\sigma)|)=\binom{n}{|\mathcal{P}(\sigma)|}P(\varepsilon=\sigma).\]
Hence, we have that
\begin{equation}\label{eq: positive LO 01}
P(\varepsilon_1v_1+\varepsilon_2v_2+\cdots+\varepsilon_nv_n\in(x-1,x+1))\leq\max_{k=0,1,2,\dots,n}P(|\mathcal{P}(\varepsilon)|=k).
\end{equation}
Note that
\[P(|\mathcal{P}(\varepsilon)|=k)=P(\sum^n_{i=1}\varepsilon_i=2k-n)\leq\sup_{x\in\mathbb{R}}P(\sum^n_{i=1}\varepsilon_i=x),\]
Hence, by \eqref{eq: positive LO 01} we have that
\begin{equation*}\label{eq: Q_n^+ leq max k}
Q_n^{+}\leq\sup_{x\in\mathbb{R}}P(\sum^n_{i=1}\varepsilon_i=x)=\sup_{x\in\mathbb{R}}P(\sum^n_{i=1}\varepsilon_i\in(x-1,x+1)).
\end{equation*}
By definition of $Q_n^{+}$, we have the opposite inequality
\[\sup_{x\in\mathbb{R}}P(\sum_{i=1}^{n}\varepsilon_i\in(x-1,x+1))\leq Q_n^{+}.\]
Thus, Theorem~\ref{thm: positive Littlewood-Offord problems} holds.

\begin{remark}\label{rem: pLO exchangeable}
By similar arguments, Theorem~\ref{thm: positive Littlewood-Offord problems} could be generalized to exchangeable random vectors with values in $\{-1,1\}^n$. I.e., let $\varepsilon=(\varepsilon_1,\varepsilon_2,\ldots,\varepsilon_n)$ be exchangeable random vectors with values in $\{-1,1\}^n$. Then, we have that
\begin{equation*}
Q^{+}_n=\sup_{x\in\mathbb{R}}P(\varepsilon_1+\varepsilon_2+\cdots+\varepsilon_n\in(x-1,x+1))
=\sup_{x\in\mathbb{R}}P(\varepsilon_1+\varepsilon_2+\cdots+\varepsilon_n=x).
\end{equation*}
\end{remark}

\subsection{Proof of Theorem~\ref{thm: positive Littlewood-Offord problem asymptotics}}\label{sect: positive Littlewood-Offord asymptotics}

By Theorem~\ref{thm: positive Littlewood-Offord problems} and the asymptotic expansion of $Z_{d,\beta,h}$ in Section~\ref{sect: asymptotic expansion of Z as a series in n}, it remains to obtain the asymptotic properties of
\[\max\limits_{k=0,1,\ldots,n}\binom{n}{k}\exp\left\{\frac{d\beta}{n}(2k-n)^2+h(2k-n)\right\}.\]

Because $\max\limits_{k=0,1,\dots,n}\binom{n}{k}\exp\left\{\frac{d\beta}{n}(2k-n)^2+h(2k-n)\right\}$ is an even function in $h$, we may assume $h\geq 0$ in the following.

Define the magnetization density
\[m:=-1+2k/n, \,\, k=0,1,\dots,n\]
and let $\mathcal{M}_n:=\{-1+\frac{2k}{n}:k=0,1,2,\dots,n\}\subset[-1,1]$.
Then, we have that
\begin{equation}\label{eq: positive k and m}
 \binom{n}{k}\exp\left\{\frac{d\beta}{n}(2k-n)^2+h(2k-n)\right\}=\binom{n}{\frac{1+m}{2}n}e^{d\beta nm^2+hnm}.
\end{equation}
For $-1\leq m\leq 1$, define the entropy density as the same as in \cite[Chapter~2]{FriedliVelenikMR3752129}:
\[s(m):=\left\{\begin{array}{ll}
-\frac{1+m}{2}\ln\frac{1+m}{2}-\frac{1-m}{2}\ln\frac{1-m}{2}, & \quad m\in(-1,1);\\
0, & \quad m=\pm 1.
\end{array}\right.\]
By the Stirling's formula, we have the following results:
\begin{lemma}\label{lem: binom bound}
There exist $0<c<C<+\infty$ such that for all $n\geq 2$ and $m\in\mathcal{M}_n$, it holds that
\begin{equation*}\label{eq: binom bound}
 c\exp\{ns(m)\}/\sqrt{n}\leq \binom{n}{\frac{1+m}{2}n}\leq C\exp\{ns(m)\}.
\end{equation*}
\end{lemma}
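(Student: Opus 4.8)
The plan is to use Stirling's formula in its standard form with explicit error control, and then rewrite the binomial coefficient in terms of the entropy density $s(m)$ directly. Recall that Stirling gives $\ln(n!) = n\ln n - n + \tfrac12\ln(2\pi n) + O(1/n)$, or in multiplicative form $n! = \sqrt{2\pi n}\,(n/e)^n e^{\theta_n}$ with $|\theta_n|\le 1/(12n)$. Writing $k = \tfrac{1+m}{2}n$ and $n-k = \tfrac{1-m}{2}n$, I would substitute these three factorials into $\binom{n}{k} = n!/(k!\,(n-k)!)$ and collect terms. The point is that the $n\ln n$, $-n$, and exponential error pieces combine so that the leading exponential factor is exactly $\exp\{n s(m)\}$, since
\[
\ln\binom{n}{k} = -k\ln\frac{k}{n} - (n-k)\ln\frac{n-k}{n} + (\text{lower order}) = n\,s(m) + (\text{lower order}).
\]

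**Next I would** handle the polynomial prefactor and the boundary behaviour carefully, since this is where the two-sided bound and the uniformity over $m\in\mathcal M_n$ come from. In the bulk, where $m$ stays away from $\pm1$, the Stirling prefactors produce a factor of order $\sqrt{n}\big/\sqrt{k(n-k)/n} = \sqrt{1/(2\pi)}\cdot\sqrt{n}\big/\sqrt{n\cdot\tfrac{1-m^2}{4}}$, which is comparable to $1/\sqrt{n}$ up to constants depending on $1-m^2$. This already gives the lower bound $c\exp\{ns(m)\}/\sqrt n$ and an upper bound of the same type in the interior. The real care is needed near the endpoints $m=\pm1$, i.e. $k\in\{0,1,n-1,n\}$, where $s(m)\to0$ and one cannot apply the asymptotic Stirling form to $0!$; there I would simply check the inequality by hand for the finitely many boundary values (e.g. $\binom{n}{0}=1=e^{n\cdot0}$, $\binom{n}{1}=n$), verifying they sit between $c e^{ns(m)}/\sqrt n$ and $Ce^{ns(m)}$ for all $n\ge2$ after adjusting the absolute constants $c,C$.

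**The main obstacle** will be proving the bounds \emph{uniformly} in $m\in\mathcal M_n$ with a single pair of constants $c,C$, rather than constants that degenerate as $m\to\pm1$. The subtlety is that the prefactor $\sqrt{n/(k(n-k))}$ blows up like $1/\sqrt{1-m^2}$ as $m$ approaches $\pm1$, so the clean interior estimate cannot hold right down to the boundary with a fixed constant. The resolution is exactly the asymmetry in the lemma's statement: the lower bound carries an extra $1/\sqrt n$ while the upper bound does not, giving enough slack to absorb the boundary layer. Concretely, I would split $\mathcal M_n$ into a boundary region (say the $O(1)$ values of $k$ within a fixed distance of $0$ or $n$) treated by direct computation, and the bulk region treated by Stirling, then verify that on the overlap the constants can be chosen consistently. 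Because the precise value of $c,C$ is immaterial to all later applications, this case analysis is routine once set up, and no sharp constant needs to be tracked.
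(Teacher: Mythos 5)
Your proposal is correct and takes essentially the same approach as the paper: the paper deduces this lemma directly from Stirling's formula without further detail, which is exactly your bulk argument, and your hand-check of the boundary values $k\in\{0,1,n-1,n\}$ supplies the only point needing care. One minor remark: the degeneration you worry about near $m=\pm1$ is not actually there, since for $1\le k\le n-1$ the Stirling prefactor $\sqrt{n/(2\pi k(n-k))}=2/\sqrt{2\pi n(1-m^2)}$ is bounded above by an absolute constant (because $k(n-k)\ge n-1\ge n/2$) and below by $2/\sqrt{2\pi n}$, so both inequalities hold with uniform constants right down to $k=1$ and $k=n-1$, and your boundary/bulk splitting is more than enough to absorb this.
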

\begin{lemma}
Fix $\delta\in(0,1)$. We have that
\begin{equation*}
\lim_{n\to\infty}\max_{m\in\mathcal{M}_n\cap[1-\delta,1+\delta]}\left|\binom{n}{\frac{1+m}{2}n}/\left(\sqrt{\frac{2}{\pi n(1-m^2)}}\exp\{ns(m)\}\right)-1\right|=0.
\end{equation*}
\end{lemma}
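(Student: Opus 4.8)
The plan is to insert the quantitative form of Stirling's formula into all three factorials appearing in $\binom{n}{k}$ with $k=\frac{1+m}{2}n$, and then to control the resulting error uniformly in $m$. Recall that $n!=\sqrt{2\pi n}\,(n/e)^n e^{r_n}$ with $0<r_n<\frac{1}{12n}$, so that $n!=\sqrt{2\pi n}\,(n/e)^n(1+O(1/n))$ with an absolute implied constant. Since $m\in\mathcal{M}_n$, both $k=\frac{1+m}{2}n$ and $n-k=\frac{1-m}{2}n$ are non-negative integers, and because the range of $m$ keeps it bounded away from the endpoints $\pm1$ by a fixed amount depending on $\delta$, there is a constant $c=c(\delta)>0$ with $k\ge cn$ and $n-k\ge cn$. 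First I would substitute the three Stirling expansions into $\binom{n}{k}=n!/(k!(n-k)!)$ and separate the three kinds of factors.

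The exponential factors cancel exactly, since $e^{-n}=e^{-k}e^{-(n-k)}$. For the power factors, taking logarithms and using $k=\frac{1+m}{2}n$ and $n-k=\frac{1-m}{2}n$ gives
\[
\ln\frac{n^n}{k^k(n-k)^{n-k}}=-k\ln\frac{1+m}{2}-(n-k)\ln\frac{1-m}{2}=ns(m),
\]
which is exactly $n$ times the entropy density $s(m)$. The square-root prefactors combine as
\[
\frac{\sqrt{2\pi n}}{\sqrt{2\pi k}\,\sqrt{2\pi(n-k)}}=\frac{1}{\sqrt{2\pi}}\sqrt{\frac{n}{k(n-k)}}=\sqrt{\frac{2}{\pi n(1-m^2)}},
\]
where the last equality uses $k(n-k)=\frac{1-m^2}{4}n^2$. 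Multiplying the three contributions, I obtain
\[
\binom{n}{k}=\sqrt{\frac{2}{\pi n(1-m^2)}}\,e^{ns(m)}\,\bigl(1+E_n(m)\bigr),
\]
where $1+E_n(m)$ is the product of the three Stirling correction factors $e^{r_n}$, $e^{-r_k}$ and $e^{-r_{n-k}}$.

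The hard part will be verifying that $\max_m|E_n(m)|\to0$, i.e.\ that the $O(1/n)$ errors are uniform over the whole range of $m$. This is exactly where the hypothesis that $m$ stays a fixed distance from $\pm1$ is used: with $k\ge cn$ and $n-k\ge cn$ one has $|r_k|\le\frac{1}{12cn}$ and $|r_{n-k}|\le\frac{1}{12cn}$, so each correction factor is $1+O(1/n)$ with a constant depending only on $\delta$, and hence so is their product. Taking the maximum over the finitely many $m\in\mathcal{M}_n$ in the admissible interval then yields $\max_m|E_n(m)|=O(1/n)\to0$, which is the claim. The algebraic identities for the exponent $s(m)$ and for the prefactor are routine once the linear substitutions for $k$ and $n-k$ are made, so in the write-up I would simply record them as above and devote the remaining effort to the uniform error estimate.
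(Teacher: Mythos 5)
Your proof is correct and follows exactly the route the paper intends: the paper states this lemma with no written proof beyond the phrase ``by Stirling's formula,'' and your argument is precisely the quantitative Stirling expansion of the three factorials, with the exponent collapsing to $ns(m)$, the prefactors collapsing to $\sqrt{2/(\pi n(1-m^2))}$, and the error controlled uniformly because $k$ and $n-k$ are both of order $n$. Note that you have (correctly) read the interval as keeping $m$ a fixed distance from $\pm 1$ --- i.e.\ as $[-1+\delta,1-\delta]$ --- which is clearly the paper's intent, since the printed range $[1-\delta,1+\delta]$ would include $m=1$, where the stated equivalence fails.
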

According to the above lemmas, when $m$ is away from $-1$ and $1$, we get
\begin{equation}\label{eq: positive k and m 02}
  \binom{n}{\frac{1+m}{2}n}e^{d\beta nm^2+hnm}\sim\sqrt{\frac{2}{\pi n(1-m^2)}}\exp\{-nf_{CW}(m)\}, \,\,n\to+\infty
\end{equation}
where by \eqref{eq: free energy functional}, the free energy functional
\begin{equation*}
 f_{CW}(m)=-s(m)-d\beta m^2-hm, \,\, m\in[-1,1].
\end{equation*}
 Hence, the maximum of \eqref{eq: positive k and m} is closely related to the maximum point of $-f_{CW}(m)$. Note that
\[f'_{CW}(m)\leq 0 \Longleftrightarrow m\leq \tanh\left(\frac{\beta}{\beta_c}m+h\right).\]
So by Lemma \ref{lem: the solutions of the mean-field equation} we know that
for $h=0$ and $\beta\leq \beta_c$,  $z_*=0$ is the only maximum point of $-f_{CW}$. For $h=0$ and $\beta>\beta_c$,  $\pm z^*$ are the maximum points of $-f_{CW}$.For $h>0$, $z_*$ is the only maximum point of $-f_{CW}$.

Hence,
combining \eqref{eq: positive k and m}, \eqref{eq: positive k and m 02} and Lemma~\ref{lem: binom bound},  we have that as $n\to\infty$,
\[\max_{0\leq k\leq n} \binom{n}{k}\exp\left\{\frac{d\beta}{n}(2k-n)^2+h(2k-n)\right\}\sim\sqrt{\frac{2}{\pi n(1-z_{*}^2)}}e^{-f_{CW}(z_{*})n}.\]
Thus, by Theorem~\ref{thm: positive Littlewood-Offord problems}, we get that
\[Q^+_n\sim \frac{1}{Z_{d,\beta,h}}\sqrt{\frac{2}{\pi n(1-z_{*}^2)}}e^{-f_{CW}(z_{*})n},\text{ as }n\to \infty.\]
For $h=0$ and $\beta<\beta_c$, $f_{CW}(z_{*})=f_{CW}(0)=-\ln 2$. Together with Theorem~\ref{thm: normalizing constant expansion} (1), we get
\[Q^+_n\sim \sqrt{\frac{2(1-2d\beta)}{\pi}}n^{-\frac{1}{2}},\text{ as }n\to \infty.\]
For $h=0$ and $\beta=\beta_c$, $f_{CW}(z_*)=f_{CW}(0)=-\ln 2$. Together with Theorem~\ref{thm: normalizing constant expansion} (2) , we get
\[Q_n^{+}\sim \frac{2}{(\frac{3}{4})^{\frac{1}{4}}\Gamma(\frac{1}{4})}n^{-\frac{3}{4}},\text{ as }n\to\infty.\]
For $h=0$ and $\beta>\beta_c$,  by Theorem~\ref{thm: normalizing constant expansion} (3) and Remark~\ref{rem: supercritical connection with the free energy2}, we get
\[Q_n^{+}\sim\sqrt{\frac{1/(1-z_{*}^2)-2d\beta}{2\pi}}n^{-\frac{1}{2}},\text{ as }n\to\infty.\]
For $h\neq 0$,  by Theorem~\ref{thm: normalizing constant expansion} (4) and Remark~\ref{rem: h neq 0 connection with the free energy}, we get
\[Q_n^{+}\sim\sqrt{\frac{2(1/(1-z_{*}^2)-2d\beta)}{\pi}}n^{-\frac{1}{2}},\text{ as }n\to\infty.\qedhere\]
Thus, Theorem~\ref{thm: positive Littlewood-Offord problem asymptotics} holds.


\section{Littlewood-Offord problem for Bernoulli random variables}\label{sect: Littlewood-Offord for iid Bernoulli}
In this section, we consider the Littlewood-Offord problem for independent and identically distributed Bernoulli random variables.  Let $(\varepsilon_i)_{1\leq i\leq n}$ be independent and identically distributed under the probability measure $P_p$ with the distribution
\begin{equation}\label{eq: Bernoulli Pp}
  P_p(\varepsilon_1=1)=p, \quad P_p(\varepsilon_1=-1)=1-p, \quad 0\leq p\leq 1.
\end{equation}
We want to find an expression for the following concentration probability:
\begin{equation}\label{eq: Qnp}
Q_{n,p}:=\sup_{x\in\mathbb{R}}\sup_{|v_1|,|v_2|,\ldots,|v_n|\geq 1}P_p(\sum^n_{i=1}v_i\varepsilon_i\in(x-1,x+1)).
\end{equation}

In  \cite{JKMR4201801} and \cite{SinghalMR4440097}, people also study Littlewood-Offord problem for independent and identically distributed Bernoulli random variables. However, their setup is different as they consider
 \[\sup_{x\in\mathbb{R}}\sup_{v_i\neq 0,\forall i=1,2,\ldots,n}P(\sum_{i=1}^{n}v_i\varepsilon_i=x).\]
 It is obvious that
 \[\sup_{x\in\mathbb{R}}\sup_{v_i\neq 0,\forall i=1,2,\ldots,n}P(\sum_{i=1}^{n}v_i\varepsilon_i=x)\leq Q_{n,p}.\]
But it is not trivial to say that the equality holds.  We will show that the equality holds in the following Proposition \ref{prop: Qp}. And we study  $Q_{n,p}$  not only because it is an interesting problem itself, but also because it is important for the Littlewood-Offord problem for the Curie-Weiss models in Section \ref{sect: Littlewood-Offord exact and asymptotic}.

\begin{proposition}\label{prop: Qp}
\begin{equation*}
Q_{n,p}=\sup_{x\in\mathbb{R}}\sup_{|v_1|,|v_2|,\ldots,|v_n|\geq 1}P_p(\sum^n_{i=1}v_i\varepsilon_i=x)
=\sup_{\ell=0,1,2,\dots,n}\sup_{d\in\mathbb{Z}}P_p(\sum^{\ell}_{i=1}\varepsilon_i-\sum^n_{j=\ell+1}\varepsilon_j=d).
\end{equation*}
\end{proposition}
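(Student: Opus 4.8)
The plan is to prove both equalities through a single chain of inequalities. The two easy directions are immediate: the rightmost expression is a special case of the middle one (take $v_i=1$ for $i\le\ell$, $v_i=-1$ for $i>\ell$, and $x=d$), and the middle one is dominated by $Q_{n,p}$ because a point event is contained in the corresponding open interval. Thus it remains only to show $Q_{n,p}\le\sup_{\ell,d}P_p(\sum_{i=1}^\ell\varepsilon_i-\sum_{j=\ell+1}^n\varepsilon_j=d)$; all three quantities then coincide.

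Fix $v$ with $|v_i|\ge 1$ and $x\in\mathbb R$, and write $S=\{i:\varepsilon_i=1\}$, a random subset distributed as the product measure $\bigotimes_i\mathrm{Ber}(p)$. Since $\sum_i v_i\varepsilon_i=2\sum_{i\in S}v_i-\sum_i v_i$, the event $\{\sum_i v_i\varepsilon_i\in(x-1,x+1)\}$ equals $\{\sum_{i\in S}v_i\in I\}$ for an open interval $I$ of length $1$. Next I would apply the sign-flip involution $S\mapsto B:=(S\cap P^+)\cup(P^-\setminus S)$, where $P^{\pm}=\{i:\pm v_i>0\}$; writing $w_i=|v_i|\ge 1$, this turns the condition into $\sum_{i\in B}w_i\in I'$ for another open length-$1$ interval $I'$, and pushes the law of $S$ forward to the product measure $\mu$ on $2^{[n]}$ with parameter $p$ on $P^+$ and parameter $1-p$ on $P^-$. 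Because each $w_i\ge 1$ and $I'$ is open of length $1$, the family $\mathcal B=\{B:\sum_{i\in B}w_i\in I'\}$ admits no strict pair $B\subsetneq B'$ (their $w$-sums would differ by at least $1$, too much for an open unit interval); hence $\mathcal B$ is an anti-chain and $P_p(\sum_i v_i\varepsilon_i\in(x-1,x+1))=\mu(\mathcal B)$.

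Writing $\ell=|P^+|$ and $L_k=\{B:|B|=k\}$ for the $k$-th layer, exchangeability within each group gives $\mu(L_k)=P_p(\sum_{i=1}^\ell\varepsilon_i-\sum_{j=\ell+1}^n\varepsilon_j=2k-n)$, so $\max_k\mu(L_k)$ is precisely the $\ell$-th term of the rightmost supremum. The desired bound therefore reduces to the weighted Sperner estimate $\mu(\mathcal B)\le\max_k\mu(L_k)$, which I would deduce from the weighted LYM inequality $\sum_{B\in\mathcal B}\mu(B)/\mu(L_{|B|})\le 1$: indeed $\mu(\mathcal B)=\sum_{B\in\mathcal B}\frac{\mu(B)}{\mu(L_{|B|})}\mu(L_{|B|})\le\max_k\mu(L_k)$ once that inequality is available.

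The main obstacle is establishing this weighted LYM inequality for the product measure $\mu$, the unweighted case being the classical LYM bound already used for $Q_n^+$. I would prove it via the weighted normalized-matching inequality $\mu(\nabla X)\,\mu(L_k)\ge\mu(X)\,\mu(L_{k+1})$ for every $X\subseteq L_k$, where $\nabla X$ is the upper shadow; for product measures this can be verified by induction on $n$ (or via the Ahlswede--Daykin four-functions inequality). Given normalized matching, a max-flow--min-cut argument furnishes, for each $k$, a coupling of the normalized layer measures $\nu_k=\mu(\cdot)/\mu(L_k)$ and $\nu_{k+1}$ supported on covering pairs; concatenating these couplings produces a random maximal chain whose level-$k$ marginal is $\nu_k$. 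Since an anti-chain meets any maximal chain at most once, $\sum_{B\in\mathcal B}\nu_{|B|}(B)=P(\text{the chain meets }\mathcal B)\le 1$, which is exactly the weighted LYM inequality. Assembling the chain of inequalities then yields $Q_{n,p}\le\sup_{\ell,d}P_p(\sum_{i=1}^\ell\varepsilon_i-\sum_{j=\ell+1}^n\varepsilon_j=d)$, completing the proof of the proposition.
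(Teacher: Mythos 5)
Your reduction is correct and follows a genuinely different route from the paper's. The sign-flip involution $S\mapsto B$ does push the law of the spins forward to the two-block product measure $\mu$ (parameter $p$ on $P^{+}$, parameter $1-p$ on $P^{-}$), the event does become an antichain $\mathcal{B}$ because nested sets have $w$-sums differing by at least $1$, and the identification $\mu(L_k)=P_p\bigl(\sum_{i=1}^{\ell}\varepsilon_i-\sum_{j=\ell+1}^{n}\varepsilon_j=2k-n\bigr)$ is right; granting the weighted LYM inequality, your closing chain of inequalities proves the whole of Proposition~\ref{prop: Qp}, and would even do so without the appeal to \cite[Theorem~1.2]{SinghalMR4440097} that the paper makes for the middle equality. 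The paper takes a more elementary path: it uses independence of the two sign classes, averages over permutations within each class so that the attainable values become sequences $s_k,t_m$ with gaps at least $2$, rewrites the probability as the weight of a non-crossing bipartite graph with edge weights $f(k)g(m)$, and shows via unimodality of these binomial weights (Lemma~\ref{lem: graph parallel}) that the optimal such graph has parallel edges $k-m=d$, which is exactly $P_p\bigl(\sum_{i\leq\ell}\varepsilon_i-\sum_{j>\ell}\varepsilon_j=d\bigr)$. Note that your reduction discards the non-crossing structure and keeps only the antichain property, so you need a strictly stronger input than the paper does.

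That input is where the genuine gap sits. The weighted LYM inequality $\sum_{B\in\mathcal{B}}\mu(B)/\mu(L_{|B|})\leq 1$, equivalently the normalized matching property of $\mu$, is classical only when the normalized layer weights are uniform: in the iid case $\mu(B)/\mu(L_{|B|})=1/\binom{n}{|B|}$ and it collapses to the ordinary LYM bound used in the proof of Theorem~\ref{thm: positive Littlewood-Offord problems}. For your $\mu$, whose two blocks carry different parameters, it is a nontrivial theorem, and neither of your suggested justifications supplies it. The induction on $n$ does not close as stated: adding one Bernoulli coordinate is a product with a two-element weighted chain, and testing the normalized matching inequality on the slice $X=L_{k-1}\times\{\text{new coordinate present}\}$ reduces it exactly to log-concavity, $N_k^2\geq N_{k-1}N_{k+1}$, of the level weights $N_j$ of the $(n-1)$-variable measure. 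So the induction cannot go through unless log-concavity is carried along in the induction hypothesis, and proving that both properties propagate under products is precisely Harper's product theorem for normalized matching posets --- a substantial result; normalized matching alone is \emph{not} preserved by products (there are NM posets with non-log-concave level weights whose product with a two-chain already fails NM on such a slice). The Ahlswede--Daykin four-functions inequality is the wrong tool altogether: it yields correlation (FKG-type) inequalities for up-sets and down-sets, not matching or expansion statements between adjacent layers. The statement you need is in fact true --- each block is normalized matching with binomial, hence log-concave, level weights, so Harper's theorem applies to their product, and your flow/random-chain derivation of LYM from normalized matching is then standard --- so your architecture can be completed, but only by importing or reproving that piece of Sperner theory; as written, the proof is missing at exactly the step where all the difficulty of the proposition is concentrated.
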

We will prove Proposition~\ref{prop: Qp} in Subsection~\ref{subsection: proof Qnp}.

\begin{corollary}\label{cor: even n bernoulli}
    For even $n$, we have that
    \[Q_{n,p}= P_p\left(\sum^{n/2}_{i=1}\varepsilon_i-\sum^n_{j=n/2+1}\varepsilon_j=0\right).\]
\end{corollary}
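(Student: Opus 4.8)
The goal is to prove Corollary~\ref{cor: even n bernoulli}: for even $n$, the quantity $Q_{n,p}$ equals $P_p\bigl(\sum_{i=1}^{n/2}\varepsilon_i-\sum_{j=n/2+1}^{n}\varepsilon_j=0\bigr)$. Given Proposition~\ref{prop: Qp}, which already identifies $Q_{n,p}$ with the double supremum $\sup_{\ell}\sup_{d\in\mathbb{Z}}P_p\bigl(\sum_{i=1}^{\ell}\varepsilon_i-\sum_{j=\ell+1}^{n}\varepsilon_j=d\bigr)$, the entire task reduces to locating the optimizing pair $(\ell,d)$ and showing it is the balanced, centered choice $\ell=n/2$, $d=0$.

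The plan is to compute the inner probability explicitly as a function of $(\ell,d)$ and then maximize. First I would observe that $\sum_{i=1}^{\ell}\varepsilon_i$ and $\sum_{j=\ell+1}^{n}\varepsilon_j$ are independent sums of $\ell$ and $n-\ell$ i.i.d.\ Bernoulli-type spins respectively. Writing $S_\ell=\sum_{i=1}^{\ell}\varepsilon_i$, the event $\{S_\ell - S'_{n-\ell}=d\}$ can be expanded by conditioning on the number of $+1$ spins in each block, giving a product of two binomial expressions summed over a one-parameter family. Concretely, if $a$ spins among the first $\ell$ are $+1$ and $b$ among the last $n-\ell$ are $+1$, then $S_\ell-S'_{n-\ell}=(2a-\ell)-(2b-(n-\ell))=2(a-b)+n-2\ell$, so the constraint $d$ fixes $a-b$. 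The probability becomes $\sum_{a-b=(d-n+2\ell)/2}\binom{\ell}{a}\binom{n-\ell}{b}p^{a+b}(1-p)^{n-a-b}$, valid when the parity matches.

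The key structural simplification is to recognize that $S_\ell - S'_{n-\ell}$ has exactly the same distribution as $\sum_{i=1}^{n}\eta_i\varepsilon_i$ with $\eta_i=+1$ for $i\leq\ell$ and $\eta_i=-1$ otherwise; but flipping a spin $\varepsilon_j\mapsto -\varepsilon_j$ in the last block turns $-\varepsilon_j$ into $+\varepsilon_j$ against a reweighting $p\leftrightarrow 1-p$ on that coordinate. This is the point where I would reduce the problem to a single sum by folding the two blocks together. The cleanest route is to note that $S_\ell-S'_{n-\ell}+\ell-(n-\ell)$ counts, up to an affine shift, a convolution of binomials, and to apply a Cauchy--Schwarz or rearrangement argument: among all ways of splitting $n$ coordinates into a $+1$-block and a $-1$-block and choosing a target $d$, the maximal point-mass of the resulting lattice distribution is achieved when the two blocks have equal size and the target sits at the mode. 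I would make this precise by writing the probability as a coefficient in a product of generating functions $(pz+(1-p)z^{-1})^{\ell}(pz^{-1}+(1-p)z)^{n-\ell}$ and arguing that the largest coefficient of such a Laurent polynomial, over all $\ell$ and all exponents, is maximized at $\ell=n/2$ where the two factors become complex-conjugate-symmetric and the polynomial has nonnegative-correlated real coefficients forcing the central coefficient to dominate.

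The main obstacle I anticipate is precisely this last maximization step: showing that the balanced split $\ell=n/2$ beats every unbalanced split, and simultaneously that $d=0$ is optimal within that balanced split. The difficulty is that for $p\neq 1/2$ the distribution is not symmetric, so the mode of $S_{n/2}-S'_{n/2}$ need not obviously be at $0$, and one must verify that the bias introduced by $p\neq 1/2$ in the two blocks cancels exactly. The resolution is the observation that $S_{n/2}-S'_{n/2}$ is a difference of two \emph{i.i.d.}\ copies of the same binomial, hence is symmetric about $0$ regardless of $p$; its maximal point-mass therefore sits at $0$ by symmetry and unimodality of such a difference. For the comparison across different $\ell$, I would use the generating-function formulation together with a log-concavity/unimodality property of binomial coefficients to show the central-coefficient bound degrades as $\ell$ moves away from $n/2$. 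Once both pieces are in place, Proposition~\ref{prop: Qp} immediately yields the claimed identity with $(\ell,d)=(n/2,0)$.
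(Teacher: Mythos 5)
Your reduction to Proposition~\ref{prop: Qp} is exactly the paper's first step, but from that point on the paper simply cites \cite[Theorem~3.1]{SinghalMR4440097}, which \emph{is} the statement that the double supremum $\sup_{\ell}\sup_{d}P_p(\sum_{i=1}^{\ell}\varepsilon_i-\sum_{j=\ell+1}^{n}\varepsilon_j=d)$ is attained at $(\ell,d)=(n/2,0)$; you instead attempt to prove this optimization yourself, and that is where the gap is. The half you do handle is fine: for $\ell=n/2$ the two block sums are i.i.d., so their difference is symmetric, and $P_p(A-A'=d)\leq P_p(A-A'=0)$ follows already from Cauchy--Schwarz. But the comparison across different $\ell$ --- the entire content of Singhal's theorem --- is only asserted, via ``log-concavity/unimodality of binomial coefficients'' and ``nonnegative-correlated real coefficients forcing the central coefficient to dominate.'' Neither phrase is an argument, and the step is delicate for a structural reason: when $p=1/2$, sign flips show that \emph{every} $\ell$ gives exactly the same value $\binom{n}{n/2}2^{-n}$, and for general $p$ the values at $\ell=n$ and $\ell=n/2$ already agree to leading order $(2\pi np(1-p))^{-1/2}$ as $n\to\infty$. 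So there is no quantitative ``degradation'' away from the balanced split for a log-concavity bound to capture; any valid proof must be exactly tight. Indeed, the natural implementation of your plan fails: setting $f(m)=P_p\bigl(\sum_{i=1}^{m}\varepsilon_i-\sum_{i=m+1}^{2m}\varepsilon_i=0\bigr)$, Cauchy--Schwarz gives $\sup_d P_p(\sum_{i\leq\ell}\varepsilon_i-\sum_{j>\ell}\varepsilon_j=d)\leq\sqrt{f(\ell)f(n-\ell)}$, but $m\mapsto f(m)=\frac{1}{2\pi}\int_{-\pi}^{\pi}\phi(t)^m\,\mathrm{d}t$ with $\phi(t)=p^2+(1-p)^2+2p(1-p)\cos 2t\geq 0$ is log-\emph{convex} (Cauchy--Schwarz again), so $\sqrt{f(\ell)f(n-\ell)}\geq f(n/2)$ and the bound points the wrong way.

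The good news is that your passing remark that at $\ell=n/2$ the two generating-function factors become complex conjugates of each other can be turned into a complete proof --- but on the unit circle, not through coefficient combinatorics. With $\psi(t)=pe^{\mathrm{i}t}+(1-p)e^{-\mathrm{i}t}$, Fourier inversion for integer-valued variables gives, for every $\ell$ and every $d$,
\begin{equation*}
P_p\Bigl(\sum_{i=1}^{\ell}\varepsilon_i-\sum_{j=\ell+1}^{n}\varepsilon_j=d\Bigr)
=\frac{1}{2\pi}\int_{-\pi}^{\pi}\psi(t)^{\ell}\,\overline{\psi(t)}^{\,n-\ell}e^{-\mathrm{i}td}\,\mathrm{d}t
\leq\frac{1}{2\pi}\int_{-\pi}^{\pi}|\psi(t)|^{n}\,\mathrm{d}t,
\end{equation*}
and for even $n$ the right-hand side equals $P_p\bigl(\sum_{i=1}^{n/2}\varepsilon_i-\sum_{j=n/2+1}^{n}\varepsilon_j=0\bigr)$, because $|\psi(t)|^{n}=\psi(t)^{n/2}\overline{\psi(t)}^{\,n/2}$ is precisely the (nonnegative) characteristic function of the balanced difference. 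Either supply this argument or cite \cite[Theorem~3.1]{SinghalMR4440097} as the paper does; as written, your proposal leaves the key maximization unproved.
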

\begin{proof}
Singhal showed in \cite[Theorem~3.1]{SinghalMR4440097} that for even $n$,
\begin{equation*}
    \sup_{\ell=0,1,2,\dots,n}\sup_{d\in\mathbb{Z}}P_p(\sum^{\ell}_{i=1}\varepsilon_i-\sum^n_{j=\ell+1}\varepsilon_j=d)
    =P_p(\sum^{n/2}_{i=1}\varepsilon_i-\sum^n_{j=n/2+1}\varepsilon_j=0).
\end{equation*}
Then by Proposition~\ref{prop: Qp} we get the result.
\end{proof}

\begin{remark}
When $n$ is odd, the expression $\sup_{\ell=0,1,2,\dots,n}\sup_{d\in\mathbb{Z}}P_p(\sum^{\ell}_{i=1}\varepsilon_i-\sum^n_{j=\ell+1}\varepsilon_j=d)$ is complicated, see \cite[Subsection 3.3]{SinghalMR4440097}. So by  Proposition~\ref{prop: Qp}, when $n$ is odd, the expression of $Q_{n,p}$ is complicated and in our opinion it is not useful for our study in Section \ref{sect: Littlewood-Offord exact and asymptotic}. Hence, we do not write it down in this
 paper.
\end{remark}

\subsection{Proof of Proposition~\ref{prop: Qp}}\label{subsection: proof Qnp}

Some proof ideas come from \cite{SinghalMR4440097}.

Fix $v_1,v_2,\ldots,v_n$ such that $|v_1|,|v_2|,\ldots,|v_n|\geq 1$. Let
\[A=\{i: v_i\geq 1\}, \,\, B=\{i: v_i\leq -1\},\]
and
\begin{equation*}
S=\left\{\sum_{i\in A}v_i\sigma_i: \sigma_i=\pm 1,\forall i=1,2,\ldots,n\right\}, T=\left\{\sum_{i\in B}|v_i|\sigma_i: \sigma_i=\pm 1,\forall i=1,2,\ldots,n\right\}.
\end{equation*}
Then by independence between $(\varepsilon_i)_{i\in A}$ and $(\varepsilon_i)_{i\in B}$, we get that
\begin{align}\label{eq: Pp 01}
P_p(\sum^n_{i=1}v_i&\varepsilon_i\in(x-1,x+1))\notag\\
&=P_p(\sum_{i\in A}v_i\varepsilon_i-\sum_{i\in B}|v_i|\varepsilon_i\in(x-1,x+1))\notag\\
&=\sum_{s\in S, t\in T}P_p(\sum_{i\in A}v_i\varepsilon_i=s, \sum_{i\in B}|v_i|\varepsilon_i=t)I_{(x-1,x+1)}(s-t)\notag\\
&=\sum_{s\in S, t\in T}P_p(\sum_{i\in A}v_i\varepsilon_i=s)P_p(\sum_{i\in B}|v_i|\varepsilon_i=t)I_{(x-1,x+1)}(s-t).
\end{align}
Here, $I_{E}(t)$ is the indicator function of the set $E$. For simplicity of notation, we write $I_{t_0}(t)$ instead of $I_{\{t_0\}}(t)$ for a single-element set $E=\{t_0\}$. Let $\mathcal{P}(\varepsilon)$ be as in \eqref{eq: spin positive}.
Note that
\[P_p(\sum_{i\in A}v_i\varepsilon_i=s)=\sum_{k=0}^{|A|}\sum_{A_{+}\subset A:|A_{+}|=k}P_p(A\cap\mathcal{P}(\varepsilon)=A_{+})I_{\sum_{i\in A_{+}}v_i-\sum_{i\in A\setminus A_{+}}v_i}(s).\]
Assume that $|A_{+}|=k$. Then, we have that
\[P_p(A\cap\mathcal{P}(\varepsilon)=A_{+})=p^{k}(1-p)^{|A|-k}.\]
Let $S_A$ be the set of all the permutations on the set $A$ and $S_B$ be the set of all the permutations on the set $B$. Then, we have that
\begin{equation*}
I_{\sum_{i\in A_{+}}v_i-\sum_{i\in A\setminus A_{+}}v_i}(s)=\frac{1}{k!(|A|-k)!}\sum_{\sigma\in S_{A}}\prod_{\ell=1}^{k}I_{A_{+}}(\sigma(\ell))I_{\sum_{\ell=1}^{k}v_{\sigma(\ell)}-\sum_{r=k+1}^{|A|}v_{\sigma(r)}}(s),
\end{equation*}
where $k=|A_{+}|$. Therefore, we have that
\begin{align*}
P_p(\sum_{i\in A}v_i\varepsilon_i=s)&=\sum_{k=0}^{|A|}\sum_{A_{+}\subset A:|A_{+}|=k}\sum_{\sigma\in S_{A}}\frac{1}{k!(|A|-k)!}p^{k}(1-p)^{|A|-k}\notag\\
&\quad \prod_{\ell=1}^{k}I_{A_{+}}(\sigma(\ell))I_{\sum_{\ell=1}^{k}v_{\sigma(\ell)}-\sum_{r=k+1}^{|A|}v_{\sigma(r)}}(s).
\end{align*}
By firstly summing over $A_{+}$ with $|A_{+}|=k$, we obtain that
\begin{equation}\label{eq: Pp A s03}
P_p(\sum_{i\in A}v_i\varepsilon_i=s)=\frac{1}{|A|!}\sum_{\sigma\in S_{A}}\alpha_{\sigma}(s),
\end{equation}
where
\begin{equation*}
\alpha_{\sigma}(s):= \sum_{k=0}^{|A|}\binom{|A|}{k}p^{k}(1-p)^{|A|-k}I_{\sum_{\ell=1}^{k}v_{\sigma(\ell)}-\sum_{r=k+1}^{|A|}v_{\sigma(r)}}(s).
\end{equation*}
Similarily, we get
\begin{equation}\label{eq: Pp B t01}
P_p(\sum_{i\in B}|v_i|\varepsilon_i=t)=\frac{1}{|B|!}\sum_{\tau\in S_B}\beta_{\tau}(t),
\end{equation}
where
\begin{equation*}
\beta_{\tau}(t):=\sum_{m=0}^{|B|}\binom{|B|}{m}p^{m}(1-p)^{|B|-m}I_{\sum_{\ell=1}^{m}|v_{\tau(\ell)}|-\sum_{r=m+1}^{|B|}|v_{\tau(r)}|}(t).
\end{equation*}
Hence, by \eqref{eq: Pp 01}, \eqref{eq: Pp A s03} and \eqref{eq: Pp B t01}, we get that
\begin{align}\label{eq: Pp 02}
P_p(\sum^n_{i=1}v_i&\varepsilon_i\in(x-1,x+1))\notag\\
=&\frac{1}{|A|!|B|!}\sum_{\sigma\in S_{A}}\sum_{\tau\in S_{B}}\sum_{s\in S, t\in T}\alpha_{\sigma}(s)\beta_{\tau}(t)I_{(x-1,x+1)}(s-t)\notag\\
\leq &\max_{\sigma\in S_A, \tau\in S_B}\sum_{s\in S, t\in T}\alpha_{\sigma}(s)\beta_{\tau}(t)I_{(x-1,x+1)}(s-t)\notag\\
=&\max_{\sigma\in S_A, \tau\in S_B}\left\{\sum^{|A|}_{k=0}\sum^{|B|}_{m=0}f(k)g(m)I_{(x-1,x+1)}(s_k(\sigma)-t_m(\tau))\right\},
\end{align}
where
\begin{equation}\label{eq: defn f and g}
f(k)=\binom{|A|}{k}p^k(1-p)^{|A|-k}, \,\,g(m)=\binom{|B|}{m}p^m(1-p)^{|B|-m}
\end{equation}
and
\begin{equation*}
s_k(\sigma)=\sum^k_{l=1}v_{\sigma(l)}-\sum^{|A|}_{r=k+1}v_{\sigma(r)}, \,\, t_m(\tau)=\sum^m_{\ell=1}|v_{\tau(\ell)}|-\sum^{|B|}_{r=m+1}|v_{\tau(r)}|.
\end{equation*}
Note that
\begin{equation}\label{eq: sk tm}
s_{k+1}(\sigma)-s_k(\sigma)=2v_{\sigma(k+1)}\geq 2, \,\,t_{m+1}(\tau)-t_m(\tau)=2|v_{\tau(m+1)}|\geq 2.
\end{equation}
By \eqref{eq: Pp 02} and \eqref{eq: sk tm}, we get
\begin{multline}\label{eq: Pp 03}
P_p(\sum^n_{i=1}v_i\varepsilon_i\in(x-1,x+1))\\
\leq\max_{\substack{s_k, t_m\in\mathbb{R}, \\ s_{k+1}-s_k\geq 2, t_{m+1}-t_m\geq 2, \\ 0\leq k\leq |A|, 0\leq m\leq |B|}}\left\{\sum^{|A|}_{k=0}\sum^{|B|}_{m=0}f(k)g(m)I_{(x-1,x+1)}(s_k-t_m)\right\}.
\end{multline}

In the following, we use the language of graph theories and bound \eqref{eq: Pp 03} from above by the weighted sum of edges in certain non-crossing bipartite graphs.  Let us consider a bipartite graph with the vertex set $V=V_1\cup V_2$, where
\begin{equation*}
	V_1=\{(0,1),(1,1),\ldots,(|B|,1)\}, V_2=\{(0,2),(1,2),\ldots,(|A|,2)\}.
\end{equation*}
And we put an (undirected) edge (an line segment) between two vertices  $(m,1)$  and $(k,2)$ iff $s_k-t_m\in(x-1,x+1)$. Then let $E$ be the set of all the edges. Let $G=(V,E)$ be the graph. By \eqref{eq: sk tm}, two different edges do not cross each other, i.e. if $s_{k_1}-t_{m_1}\in (x-1,x+1)$, $s_{k_2}-t_{m_2}\in (x-1,x+1)$, then either $k_1<k_2$, $m_1<m_2$ or $k_1>k_2$, $m_1>m_2$. Indeed, if $k_1<k_2$ and $m_2\leq m_1$, then
\begin{equation*}
s_{k_2}-t_{m_2}=(s_{k_2}-s_{k_1})+(t_{m_1}-t_{m_2})+s_{k_1}-t_{m_1}\geq 2+s_{k_1}-t_{m_1}>2+(x-1)=x+1,
\end{equation*}
which is a contradiction. It is similar to rule out the other cases. For the edge $e=\{(m,1), (k,2)\}\in E$, we  denote
\[\pi_1(e)=m,\,\, \pi_2(e)=k.\]
We define the weight $w(e)$ of the edge $e$ by
\[w(e)=f(\pi_2(e))g(\pi_1(e))=f(k)g(m).\]
Define the weight $w(G)$ of the graph $G$ as the sum of weights of all edges:
\[w(G):=\sum_{e\in E}w(e)=\sum^{|A|}_{k=0}\sum^{|B|}_{m=0}f(k)g(m)I_{(x-1,x+1)}(s_k-t_m).\]
Hence, by \eqref{eq: Pp 03}, we get that
\begin{align}\label{eq: Pp 04}
P_p(\sum^n_{i=1}v_i\varepsilon_i\in(x-1,x+1))&\leq \max_{\substack{s_k, t_m\in\mathbb{R}, \\ s_{k+1}-s_k\geq 2, t_{m+1}-t_m\geq 2, \\ 0\leq k\leq |A|, 0\leq m\leq |B|}}w(G)\notag\\
&\leq \sup_{\substack{G \,\,\text{is a non-crossing bipartite}\\
\text{graph with the vertex set $V$}}}w(G).
\end{align}

\begin{lemma}\label{lem: graph parallel}
There exists a non-crossing bipartite graph $G^*=(V, E^*)$ with the vertex set $V$ such that
$$w(G^*)=\sup_{\substack{G \,\,\text{is a non-crossing bipartite}\\
\text{graph with the vertex set $V$}}}w(G).$$
Besides, the edges (i.e. line segments) of $G^*$ are parallel to each other, i.e. there exists a constant $d$ such that for all $e\in E^*$,
\begin{equation*}
\pi_2(e)-\pi_1(e)=d.
\end{equation*}
\end{lemma}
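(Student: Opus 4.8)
The plan is to turn the supremum over non-crossing bipartite graphs into a clean combinatorial optimization and then \emph{straighten} an optimizer into a parallel one by local exchanges driven by the unimodality of the binomial weights. Existence is immediate: the vertex set $V$ is finite, so there are only finitely many bipartite graphs on $V$, and hence the supremum over the (finitely many) non-crossing ones is attained by some $G^*$. The substantive claim is that among all maximizers one can be chosen with a common offset $\pi_2(e)-\pi_1(e)$. First I would record the structure forced by the non-crossing condition. As already observed in the text, for two distinct edges $e,e'$ one has $\pi_1(e)<\pi_1(e')\Leftrightarrow \pi_2(e)<\pi_2(e')$; moreover two edges cannot share an endpoint (equal first or second coordinate violates the strict either/or condition). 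Thus a non-crossing graph is exactly a strictly increasing chain of edges $(m_1,k_1),\dots,(m_r,k_r)$ in the grid $\{0,\dots,|B|\}\times\{0,\dots,|A|\}$ (both coordinates strictly increasing), with weight $w(G)=\sum_i f(k_i)g(m_i)$, where $f,g$ are the binomial weights of \eqref{eq: defn f and g}, each unimodal in its index.

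Next I would set up the key \emph{exchange move}. Write $c_i=k_i-m_i=\pi_2(e_i)-\pi_1(e_i)$ for the offset of the $i$-th edge, and suppose a maximizer has two consecutive edges with $c_i\neq c_{i+1}$. If $c_{i+1}>c_i$, then $k_{i+1}-k_i>m_{i+1}-m_i\geq 1$, so $k_{i+1}-k_i\geq 2$ and there is vertical slack: each of the replacements $e_i\mapsto(m_i,k_i+1)$ and $e_{i+1}\mapsto(m_{i+1},k_{i+1}-1)$ keeps the chain strictly increasing (the gap to the neighbours is preserved). These replacements change $w$ by $g(m_i)[f(k_i+1)-f(k_i)]$ and by $g(m_{i+1})[f(k_{i+1}-1)-f(k_{i+1})]$ respectively. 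The crucial point is that unimodality of $f$ forces \emph{at least one} of these to be nonnegative: the only way both could be negative is $f(k_i+1)<f(k_i)$ together with $f(k_{i+1}-1)<f(k_{i+1})$, i.e.\ $f$ decreasing at $k_i$ and still increasing just below $k_{i+1}$, which is impossible for $k_i<k_{i+1}$. Performing the admissible one of the two moves brings $c_i$ and $c_{i+1}$ one unit closer without decreasing $w$. The symmetric case $c_{i+1}<c_i$ uses horizontal slack ($m_{i+1}-m_i\geq 2$) and the unimodality of $g$.

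Finally I would iterate these moves to equalize all offsets. Among all maximizers, choose one minimizing the offset spread $\max_i c_i-\min_i c_i$, with a suitable auxiliary tie-break (for instance the number of edges lying on the extreme diagonals). If the spread were positive, I would locate the largest-offset diagonal and apply the exchange at the boundary of this plateau to \emph{peel} an edge off it, producing another maximizer of strictly smaller potential and contradicting minimality; once the spread is forced to $0$, every edge of the resulting $G^*$ has the same offset $d=\pi_2(e)-\pi_1(e)$, which is the assertion. The main obstacle is precisely this termination bookkeeping: an admissible move is only guaranteed to be weight-nondecreasing, not to decrease the chosen potential, so one must verify that, whenever the spread is positive, \emph{some} weight-preserving move also reduces it—either by peeling an edge off the top diagonal or by lifting an edge off the bottom diagonal. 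Showing that at least one of these is always available is where the unimodality of $f$ and of $g$ must be combined (the obstruction to peeling from the top involves $f$ being still increasing, while the obstruction to peeling from the bottom involves $g$ being still decreasing, and these cannot both persist along a common extreme diagonal). This case analysis is the technical heart of the argument, whereas the algebra of the individual moves is routine.
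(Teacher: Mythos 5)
Your setup (existence by finiteness, non-crossing graphs as strictly increasing chains) and your exchange move are both sound, and the move is a genuinely different local operation from the paper's (the paper inserts a new edge whenever both coordinate gaps are $\geq 2$, and otherwise slides an endpoint using a two-case unimodality analysis that yields a \emph{strict} weight increase, so maximality is contradicted immediately). The genuine gap is exactly where you place it: the termination bookkeeping is never carried out, and as sketched it does not work. Because your exchange is only claimed to be weight-nondecreasing, a maximizer only produces another maximizer, and everything hinges on showing the iteration equalizes the offsets. But the potential you propose is defective: when the spread equals $1$, every edge lies on an extreme diagonal, and peeling an edge off the top diagonal simply transfers it to the bottom diagonal, so neither the spread nor the number of extreme-diagonal edges decreases. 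Moreover, at a boundary of the top run the admissible move may be the one that raises the lower neighbour (which does not reduce your potential) rather than the one that peels, and your claim that the obstructions to peeling from the top and lifting from the bottom ``cannot both persist'' is not argued; the two obstructions sit at different locations along the chain, and nothing you say rules out their coexistence. So the proof is incomplete at its self-identified technical heart.

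The good news is that your own exchange move closes the gap with no iteration at all, once you use what the weights actually are rather than bare unimodality. For $f(k)=\binom{|A|}{k}p^k(1-p)^{|A|-k}$ the ratio $f(k+1)/f(k)=\frac{(|A|-k)p}{(k+1)(1-p)}$ is strictly decreasing in $k$, and similarly for $g$. Hence if $f(k_i+1)\leq f(k_i)$, then $f(j+1)<f(j)$ for every $j\geq k_i+1$, so $f(k_{i+1}-1)>f(k_{i+1})$ as soon as $k_{i+1}\geq k_i+2$. In other words, in your vertical exchange at a pair with $c_i<c_{i+1}$, at least one of the two moves \emph{strictly} increases the weight (the multipliers $g(m_i),g(m_{i+1})$ are positive for $p\in(0,1)$); symmetrically, the horizontal exchange at a pair with $c_i>c_{i+1}$ contains a strictly improving move by strict log-concavity of $g$. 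A maximizer therefore cannot contain two consecutive edges with different offsets, and \emph{every} maximizer is parallel --- no potential function, no tie-break, no case analysis along diagonals. Your caution was warranted under the hypothesis you actually invoked (mere unimodality admits flat stretches, e.g.\ constant $f$, where both moves are weight-neutral and iteration would genuinely be needed), but for the binomial weights at hand the strict version of your lemma is available and collapses the argument to the same one-step contradiction that the paper uses.
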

\begin{proof}
Firstly, because the vertex set $V$ is finite, the non-crossing bipartite graphs with vertex set $V$ are finite. So, the set
\[\{w(G): G\text{ is a non-crossing bipartite graph with vertex set }V\}\]
is finite. Hence, there exists a non-crossing bipartite graph $G^*=(V, E^*)$ with the vertex set $V$ such that
$$w(G^*)=\sup_{\substack{G \,\,\text{is a non-crossing bipartite}\\
\text{graph with the vertex set $V$}}}w(G).$$

Secondly, we prove that the edges (line segments) of $G^*$ are parallel to each other in the following. Consider two neighbour edges $e_1,e_2\in E^*$ with $\pi_2(e_1)<\pi_2(e_2)$. Because $G^*$ is a non-crossing bipartite graph and $\pi_2(e_2)>\pi_2(e_1)$, it holds that $\pi_1(e_2)>\pi_1(e_1)$. Necessarily, we have that
\begin{equation}\label{eq: the space between e1 and e2}
  \pi_2(e_2)-\pi_2(e_1)=1\,\,\text{or}\,\,\pi_1(e_2)-\pi_1(e_1)=1.
\end{equation}
Because when $\pi_2(e_2)-\pi_2(e_1)\geq 2$ and $\pi_1(e_2)-\pi_1(e_1)\geq 2$, we can add an edge $\widetilde{e}$ in  the graph $G^*$ such that $\pi_2(e_1)<\pi_2(\widetilde{e})<\pi_2(e_2)$ and $\pi_1(e_1)<\pi_1(\widetilde{e})<\pi_1(e_2)$. Let $\widetilde{E}=\{\widetilde{e}\}\cup E^*$ and $\widetilde{G}=(V, \widetilde{E})$. Then, since the weight of each edge is positive, $\widetilde{G}$ is a non-crossing bipartite graph with
\begin{equation*}
w(\widetilde{G})-w(G^*)=\sum_{e\in \widetilde{E}}f(\pi_2(e))g(\pi_1(e))-\sum_{e\in E^*}f(\pi_2(e))g(\pi_1(e))=f(\pi_2(\widetilde{e}))g(\pi_1(\widetilde{e}))>0,
\end{equation*}
which contradicts with the maximality of $w(G^{*})$. Hence, \eqref{eq: the space between e1 and e2} holds.

When $\pi_2(e_2)-\pi_2(e_1)=1$, we want to prove $\pi_1(e_2)-\pi_1(e_1)=1$. If $\pi_1(e_2)-\pi_1(e_1)\neq1$,
we have $\pi_1(e_2)-\pi_1(e_1)\geq 2$. By the definition of the functions $f$ and $g$ in \eqref{eq: defn f and g}, we know that the functions $f$ and $g$ are unimodal. Hence, there are only two cases for the function $g$:

Case 1. There exists $\pi_1(e_1)< j_0<\pi_1(e_2)$ such that
$$g(j_0)>g(\pi_1(e_2)).$$
Then let $\widetilde{e}:=\{(j_0,1),(\pi_2(e_2),2)\}$, $\widetilde{E}=\{\widetilde{e}\}\cup \{e\in E^*: e\neq e_2\}$ and $\widetilde{G}=(V, \widetilde{E})$. Hence, we get that
\begin{align*}
w(\widetilde{G})-w(G^*)&=\sum_{e\in\widetilde{E}}f(\pi_2(e))g(\pi_1(e))-\sum_{e\in E^*}f(\pi_2(e))g(\pi_1(e))\\
&=f(\pi_2(\widetilde{e}))g(\pi_1(\widetilde{e}))-f(\pi_2(e_2))g(\pi_1(e_2))\\
&=f(\pi_2(e_2))[g(j_0)-g(\pi_1(e_2))]>0,
\end{align*}
which contradicts with the maximality of $w(G^{*})$.

Case 2. There exists no $\pi_1(e_1)< j_0<\pi_1(e_2)$ such that
$$g(j_0)>g(\pi_1(e_2)).$$
Then, since $g$ is unimodal, we must have that
\[g(\pi_1(e_1))<g(\pi_1(e_1)+1)<\dots<g(\pi_1(e_2)-1)\leq g(\pi_1(e_2)).\]
Then let $\widetilde{e}:=\{(\pi_1(e_1)+1,1),(\pi_2(e_1),2)\}$, $\widetilde{E}=\{\widetilde{e}\}\cup \{e\in E^*: e\neq e_1\}$ and $\widetilde{G}=(V, \widetilde{E})$. Hence, we get that
\begin{align*}
w(\widetilde{G})-w(G^*)&=\sum_{e\in\widetilde{E}}f(\pi_2(e))g(\pi_1(e))-\sum_{e\in E^*}f(\pi_2(e))g(\pi_1(e))\\
&=f(\pi_2(\widetilde{e}))g(\pi_1(\widetilde{e}))-f(\pi_2(e_1))g(\pi_1(e_1))\\
&=f(\pi_2(e_1))[g(\pi_1(e_1)+1)-g(\pi_1(e_1))]>0,
\end{align*}
which contradicts with the maximality of $w(G^{*})$.

Combining Case 1 and Case 2, we see that $\pi_1(e_2)-\pi_1(e_1)=1$ as long as $\pi_2(e_2)-\pi_2(e_1)=1$. Similarly, since the function $f(k)$ is also unimodal, we have that $\pi_2(e_2)-\pi_2(e_1)=1$ as long as $\pi_1(e_2)-\pi_1(e_1)=1$. Hence, for any pair of neighbour edges $e_1,e_2\in E^*$, we have that $\pi_2(e_2)-\pi_2(e_1)=1$ and $\pi_1(e_2)-\pi_1(e_1)=1$, which means that the edges (line segments) of $G^*$ are parallel to each other.
\end{proof}

By Lemma \ref{lem: graph parallel}, there exists a constant $d$ such that
\begin{align}\label{eq: wG* and Pp}
	w(G^*)&=\sum_{e\in E^*}f(\pi_2(e))g(\pi_1(e))\notag\\
    &=\sum_{\substack{0\leq k\leq |A|, 0\leq m\leq |B|\\ k-m=d}}f(k)g(m)\notag\\
	&=\sum_{\substack{0\leq k\leq |A|, 0\leq m\leq |B|\\ k-m=d}}P_p(\sum^{|A|}_{i=1}\varepsilon_i=k)P_p(\sum^n_{j=|A|+1}\varepsilon_j=m)\notag\\
	&=P_p(\sum^{|A|}_{i=1}\varepsilon_i-\sum^n_{j=|A|+1}\varepsilon_j=d).
\end{align}
So, by \eqref{eq: Pp 04}, \eqref{eq: wG* and Pp} and Lemma \ref{lem: graph parallel}, we get
\begin{align*}
	P_p(\sum^n_{i=1}v_i\varepsilon_i\in (x-1, x+1))\leq& P_p(\sum^{|A|}_{i=1}\varepsilon_i-\sum^n_{j=|A|+1}\varepsilon_j=d)\\
	\leq& \sup_{d\in\mathbb{Z}}P_p(\sum^{|A|}_{i=1}\varepsilon_i-\sum^n_{j=|A|+1}\varepsilon_j=d)\\
	\leq& \sup_{\ell=0,1,2,\dots,n}\sup_{d\in\mathbb{Z}}P_p(\sum^{\ell}_{i=1}\varepsilon_i-\sum^n_{j=\ell+1}\varepsilon_j=d).
\end{align*}
Hence, by \eqref{eq: Qnp}, we have that
\begin{align*}
Q_{n,p}&\leq\sup_{\ell=0,1,2,\dots,n}\sup_{d\in\mathbb{Z}}P_p(\sum^{\ell}_{i=1}\varepsilon_i-\sum^n_{j=\ell+1}\varepsilon_j=d)\\
&\leq \sup_{\ell=0,1,2,\dots,n}\sup_{d\in\mathbb{Z}}P_p(\sum^{\ell}_{i=1}\varepsilon_i-\sum^n_{j=\ell+1}\varepsilon_j\in (d-1,d+1))\\
&\leq Q_{n,p},
\end{align*}
i.e.
\begin{equation*}\label{eq: Qnp expression in d}
	Q_{n,p}= \sup_{\ell=0,1,2,\dots,n}\sup_{d\in\mathbb{Z}}P_p(\sum^{\ell}_{i=1}\varepsilon_i-\sum^n_{j=\ell+1}\varepsilon_j=d).
\end{equation*}
Hence, by \cite[Theorem~1.2]{SinghalMR4440097}, it holds that
\[Q_{n,p}= \sup_{\ell=0,1,2,\dots,n}\sup_{d\in\mathbb{Z}}P_p(\sum^{\ell}_{i=1}\varepsilon_i-\sum^n_{j=\ell+1}\varepsilon_j=d)=\sup_{x\in\mathbb{R}}\sup_{|v_1|,|v_2|,\ldots,|v_n|\geq 1}P_p(\sum^n_{i=1}v_i\varepsilon_i=x).\] Hence, Proposition~\ref{prop: Qp} holds.

\section{Littlewood-Offord problems for Curie-Weiss models}\label{sect: Littlewood-Offord exact and asymptotic}
In this section we study the Littlewood-Offord problems for Curie-Weiss models, i.e. we  prove Theorem~\ref{thm: Littlewood-Offord problems} and Theorem~\ref{thm: Littlewood-Offord problem asymptotics}.

\subsection{Proof of Theorem~\ref{thm: Littlewood-Offord problems}}
\begin{lemma}\label{lem: Curie weiss and Bernoulli}
Let $\varepsilon=(\varepsilon_1, \varepsilon_2,\ldots,\varepsilon_n)$ be the vector of spins in Curie-Weiss models. Then for any $\sigma\in\{-1,1\}^n$,
\[P(\varepsilon=\sigma)=\int_{0}^{1}K(p,\sigma)\nu(\mathrm{d}p),\]
where $\nu(\mathrm{d}p)$ is a probability measure, $K(p,\sigma)$ is a probability kernel such that for fixed $p\in (0,1)$,
\begin{equation}\label{eq: defn Kpsigma}
K(p,\sigma)=\prod_{i=1}^{n}(p\cdot I_{\{1\}}(\sigma_i)+(1-p)\cdot I_{\{-1\}}(\sigma_i)),
\end{equation}
i.e. under the probability measure $K(p,\sigma)$ for fixed $p\in (0,1)$,
$(\varepsilon_i)_{1\leq i\leq n}$ are independent and identically distributed Bernoulli random variables with parameter $p$.
\end{lemma}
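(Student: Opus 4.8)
The plan is to realize the Curie–Weiss law as a mixture of product (i.i.d.\ Bernoulli) measures by applying the Hubbard–Stratonovich transformation exactly as in the proof of \eqref{eq: Z and N(0,1)}, and then to recognize the resulting Gaussian integral as an integral against a probability measure $\nu$ on $[0,1]$. This is a de~Finetti-type representation tailored to the specific form of the Curie–Weiss weight.

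First I would decouple the quadratic interaction. By Hubbard's transformation, for any $\sigma\in\{-1,1\}^n$,
\[
\exp\left\{\frac{d\beta}{n}\Big(\sum_{j=1}^{n}\sigma_j\Big)^2+h\sum_{j=1}^{n}\sigma_j\right\}
=E\left[\exp\left\{\Lambda\sum_{j=1}^{n}\sigma_j\right\}\right],\qquad
\Lambda:=\sqrt{\tfrac{2d\beta}{n}}\,Y+h,
\]
where $Y$ is a standard Gaussian, so that $P(\varepsilon=\sigma)=Z_{d,\beta,h}^{-1}\,E\big[\prod_{j=1}^{n}e^{\Lambda\sigma_j}\big]$. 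Next, for each fixed value $\lambda$ of $\Lambda$ I would convert the single-site factor into a Bernoulli weight: setting $p(\lambda):=e^{\lambda}/(e^{\lambda}+e^{-\lambda})=e^{\lambda}/(2\cosh\lambda)$, a one-line check gives, for $\sigma_j\in\{-1,1\}$,
\[
\frac{e^{\lambda\sigma_j}}{2\cosh\lambda}=p(\lambda)\,I_{\{1\}}(\sigma_j)+(1-p(\lambda))\,I_{\{-1\}}(\sigma_j),
\]
so that $\prod_{j=1}^{n}e^{\lambda\sigma_j}=(2\cosh\lambda)^n\,K(p(\lambda),\sigma)$ with $K$ as in \eqref{eq: defn Kpsigma}. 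Substituting back yields
\[
P(\varepsilon=\sigma)=\int_{\mathbb{R}}\frac{(2\cosh\lambda)^n}{Z_{d,\beta,h}}\,K(p(\lambda),\sigma)\,\mu(\mathrm{d}\lambda),
\]
where $\mu$ denotes the law of $\Lambda$.

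Finally I would identify the mixing measure. Define $\widetilde{\nu}(\mathrm{d}\lambda):=(2\cosh\lambda)^n Z_{d,\beta,h}^{-1}\mu(\mathrm{d}\lambda)$ on $\mathbb{R}$; its total mass equals $Z_{d,\beta,h}^{-1}E[(2\cosh\Lambda)^n]$, which is exactly $1$ by \eqref{eq: Z and N(0,1)}, since $E[(2\cosh\Lambda)^n]=2^n E[e^{n\ln\cosh\Lambda}]=Z_{d,\beta,h}$. Thus $\widetilde{\nu}$ is a probability measure. Because $\lambda\mapsto p(\lambda)$ is a strictly increasing bijection from $\mathbb{R}$ onto $(0,1)$ and the integrand depends on $\lambda$ only through $p(\lambda)$, the change of variables $p=p(\lambda)$ pushes $\widetilde{\nu}$ forward to a probability measure $\nu$ on $(0,1)\subset[0,1]$, and one obtains $P(\varepsilon=\sigma)=\int_{0}^{1}K(p,\sigma)\,\nu(\mathrm{d}p)$, as claimed.

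The computation is essentially routine once the Hubbard transformation is in place; the only points requiring care are the verification that $\widetilde{\nu}$ has total mass one, which is precisely where the identity \eqref{eq: Z and N(0,1)} for the partition function is invoked, and the observation that $p(\cdot)$ maps onto $(0,1)$, so that the resulting $\nu$ is genuinely a probability measure on $[0,1]$ and $K(p,\cdot)$ is a bona fide i.i.d.\ Bernoulli product kernel for $\nu$-almost every $p$.
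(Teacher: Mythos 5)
Your proposal is correct and follows essentially the same route as the paper: Hubbard's transformation to decouple the interaction, rewriting the single-site factors as Bernoulli weights with $p=e^{\lambda}/(e^{\lambda}+e^{-\lambda})$, and a change of variables to obtain the mixing measure $\nu$ on $(0,1)$. The only cosmetic difference is that the paper writes out the density of $\nu$ explicitly via the Jacobian of this change of variables, whereas you identify $\nu$ as a pushforward and check its total mass equals $1$ using \eqref{eq: Z and N(0,1)}; both are valid.
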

\begin{proof}
By Hubbard's transformation, we have that
\begin{equation*}
 P(\varepsilon=\sigma)=\frac{1}{Z_{d,\beta,h}}\exp\left\{\frac{d\beta}{n}(\sum^n_{j=1}\sigma_j)^2+h\sum^n_{j=1}\sigma_j\right\}=\frac{1}{Z_{d,\beta,h}}E\left[e^{(\sum^n_{j=1}\sigma_j)(\sqrt{\frac{2d\beta}{n}}Y+h)}\right],
\end{equation*}
where $Y$ is a standard Gaussian random variable. Thus, we have that
\begin{align}\label{eq: varepsilon conditional disitribution}
 P(\varepsilon=\sigma)=&\frac{1}{Z_{d,\beta,h}}\int_{\mathbb{R}}e^{(\sum^n_{j=1}\sigma_j)(\sqrt{\frac{2d\beta}{n}}y+h)}\frac{1}{\sqrt{2\pi}}e^{-y^2/2}\,\mathrm{d}y\notag\\
 =&\frac{1}{Z_{d,\beta,h}}\int_{\mathbb{R}}2^n\cosh^n\left(\sqrt{\frac{2d\beta}{n}y}+h\right)\frac{1}{\sqrt{2\pi}}e^{-y^2/2}\notag\\
&\quad\quad\prod^n_{j=1}\frac{e^{\sigma_j(\sqrt{\frac{2d\beta}{n}}y+h)}}{e^{(\sqrt{\frac{2d\beta}{n}}y+h)}+e^{-(\sqrt{\frac{2d\beta}{n}}y+h)}}\,\mathrm{d}y.
\end{align}
Let $p=e^{(\sqrt{\frac{2d\beta}{n}}y+h)}/(e^{(\sqrt{\frac{2d\beta}{n}}y+h)}+e^{-(\sqrt{\frac{2d\beta}{n}}y+h)})$. Then, we have that
\[P(\varepsilon=\sigma)=\int_{0}^{1}K(p,\sigma)\nu(\mathrm{d}p),\]
where
\begin{equation*}
\nu(\mathrm{d}p)=\frac{\mathrm{d}p}{Z_{d,\beta,h}}\sqrt{\frac{n}{\pi d\beta}}\frac{1}{4p(1-p)} [p(1-p)]^{-\frac{n}{2}}\exp\left\{-\frac{n}{4d\beta}\left(\frac{1}{2}\ln\left(\frac{p}{1-p}\right)-h\right)^2\right\},
\end{equation*}
which is a probability measure, and $K(p,\sigma)$ is defined in \eqref{eq: defn Kpsigma}.
\end{proof}

Hence, from Lemma~\ref{lem: Curie weiss and Bernoulli}, we get that the Curie-Weiss model is a mixture of independent and identically distributed Bernoulli random variables.
Recall the definitions of $Q_n$, $P_p(\,\cdot\,)$ and $Q_{n,p}$ respectively in \eqref{eq: def Qn}, \eqref{eq: Bernoulli Pp}  and \eqref{eq: Qnp}. By Lemma~\ref{lem: Curie weiss and Bernoulli}, we have that
\begin{align}\label{eq: Qn inequ01}
Q_n&=\sup_{x\in\mathbb{R}}\sup_{|v_1|,|v_2|,\ldots,|v_n|\geq 1}\int^1_0P_p(\sum_{i=1}^{n}\varepsilon_iv_i\in(x-1,x+1))\nu(\mathrm{d}p)\notag\\
&\leq \int^1_0\sup_{x\in\mathbb{R}}\sup_{|v_1|,|v_2|,\ldots,|v_n|\geq 1}P_p(\sum_{i=1}^{n}\varepsilon_iv_i\in(x-1,x+1))\nu(\mathrm{d}p)\notag\\
&=\int^1_0Q_{n,p}\nu(\mathrm{d}p).
\end{align}
Hence, if $n$ is even, by \eqref{eq: Qn inequ01} and Corollary~\ref{cor: even n bernoulli}, we get
\begin{multline*}
P(\sum^{n/2}_{i=1}\varepsilon_i-\sum^n_{j=n/2+1}\varepsilon_j=0)\leq Q_n\leq \int^1_0Q_{n,p}\nu(\mathrm{d}p)\\
=\int^1_0P_p(\sum^{n/2}_{i=1}\varepsilon_i-\sum^n_{j=n/2+1}\varepsilon_j=0)\nu(\mathrm{d}p)=P(\sum^{n/2}_{i=1}\varepsilon_i-\sum^n_{j=n/2+1}\varepsilon_j=0),
\end{multline*}
i.e.
\begin{equation}\label{eq: Qn even}
Q_n=\int_{0}^{1}Q_{n,p}\,\nu(\mathrm{d}p)=P(\sum^{n/2}_{i=1}\varepsilon_i-\sum^n_{j=n/2+1}\varepsilon_j=0).
\end{equation}
By rewriting $\sum_{i=1}^{n}\varepsilon_iv_i\in(x-1,x+1)$ as
$\sum_{i=1}^{n-1}\varepsilon_iv_i\in(x-1-\varepsilon_nv_n,x+1-\varepsilon_nv_n)$,
using the independence between $(\varepsilon_i)_{1\leq i\leq n}$ under $P_p$, we see that
\[Q_{n,p}\leq Q_{n-1,p}.\]
Hence, for odd $n$, using \eqref{eq: Qn inequ01}, together with \eqref{eq: Qn even} for even $n-1$, we have that
\begin{equation*}
P\left(\sum_{i=1}^{(n-1)/2}\varepsilon_i-\sum_{i=(n+1)/2}^{n}\varepsilon_i=1\right)\leq Q_n
\leq\int_{0}^{1}Q_{n,p}\,\nu(\mathrm{d}p)\leq\int_{0}^{1}Q_{n-1,p}\,\nu(\mathrm{d}p)=Q_{n-1}.
\end{equation*}
Thus Theorem~\ref{thm: Littlewood-Offord problems} holds.

\subsection{Proof of Theorem~\ref{thm: Littlewood-Offord problem asymptotics}}
From Theorem~\ref{thm: Littlewood-Offord problems}, we can see that for even $n$, $Q_n$ has a specific expression, but for odd $n$, we only get a inequality. Hence, we have to discuss the oddity of $n$ to prove Theorem~\ref{thm: Littlewood-Offord problem asymptotics} in the following.

\subsubsection{For even \texorpdfstring{$n$}{n}}\label{subsubsect: even n}

Let $n\geq 2$ be a positive even integer. Define
\begin{align*}
O_{d,\beta,h} &= \sum_{\sigma\in\{-1,1\}^n:\sum_{i=1}^{n/2}\sigma_i-\sum_{i=n/2+1}^{n}\sigma_i=0}\exp\left\{\frac{d\beta}{n}(\sum_{j=1}^{n}\sigma_j)^2+h\sum_{j=1}^{n}\sigma_j\right\}\\
\end{align*}
Then, by \eqref{eq: Qn even equality}, we have that
\begin{equation}\label{eq: Qn = O/Z}
Q_n=P\left(\sum_{i=1}^{n/2}\varepsilon_i-\sum_{i=n/2+1}^{n}\varepsilon_i=0\right)=\frac{O_{d,\beta,h}}{Z_{d,\beta,h}}.
\end{equation}

We wish to find the asymptotic expansion of $O_{d,\beta,h}$. Note that $O_{d,\beta,h}=O_{d,\beta,|h|}$. Hence, without loss of generality, we still assume that $h\geq 0$. Inspired by the method to get the asymptotic expansion of $Z_{d,\beta,h}$ in Section \ref{sect: asymptotic expansion of Z as a series in n}, we will expand $O_{d,\beta,h}$ in a similar way.

\begin{lemma}\label{lem: expectation representation of O d beta h}
Let $n\geq 2$ be a positive even integer. Then, we have that
\begin{equation*}
O_{d,\beta,h}=2^{n/2}E\left[\exp\left\{\frac{n}{2}\ln\left(\cosh(2\sqrt{\frac{2d\beta}{n}}Y+2h)+\cos U\right)\right\}\right],
\end{equation*}
where $Y$ is a standard Gaussian random variable, $U$ follows the uniform distribution on $(-\pi,\pi)$,  $Y$ and $U$ are independent.
\end{lemma}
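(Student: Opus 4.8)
The plan is to mirror the derivation of \eqref{eq: Z and N(0,1)}, combining Hubbard's transformation with a Fourier (integral) representation of the constraint indicator. Writing the $n$ spins as two blocks of size $n/2$, the event $\sum_{i=1}^{n/2}\sigma_i-\sum_{i=n/2+1}^{n}\sigma_i=0$ says the two block sums agree; calling their common value $S$, we have $\sum_{j=1}^{n}\sigma_j=2S$ on this event. First I would encode the constraint by the identity $I_{\{0\}}(m)=\frac{1}{2\pi}\int_{-\pi}^{\pi}e^{i\theta m}\,\mathrm{d}\theta$, valid for integer $m$, applied to $m=\sum_{i=1}^{n/2}\sigma_i-\sum_{i=n/2+1}^{n}\sigma_i$. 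This removes the awkward constraint from the summation at the cost of one bounded integral over $\theta$.

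Then I would apply Hubbard's transformation exactly as in the proof of \eqref{eq: Z and N(0,1)}, writing $\exp\{\frac{d\beta}{n}(\sum_j\sigma_j)^2\}=E[\exp\{\sqrt{2d\beta/n}\,Y\sum_j\sigma_j\}]$ for a standard Gaussian $Y$. Setting $a=\sqrt{2d\beta/n}\,Y+h$, the combined exponent becomes linear and block-separated: the first block carries $(a+i\theta)\sum_{i\le n/2}\sigma_i$ and the second block carries $(a-i\theta)\sum_{i>n/2}\sigma_i$. Summing each block over $\{-1,1\}^{n/2}$ independently produces $(2\cosh(a+i\theta))^{n/2}(2\cosh(a-i\theta))^{n/2}$. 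The key algebraic simplification is the product-to-sum identity $\cosh(a+i\theta)\cosh(a-i\theta)=\tfrac12(\cosh 2a+\cos 2\theta)$, which turns the complex factors into the real quantity appearing in the statement.

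At this stage I would have $O_{d,\beta,h}=2^{n/2}\cdot\frac{1}{2\pi}\int_{-\pi}^{\pi}E[(\cosh(2a)+\cos 2\theta)^{n/2}]\,\mathrm{d}\theta$ after collecting the powers of $2$. The last step is to recognize the $\theta$-average as an expectation over $U\sim\mathrm{Unif}(-\pi,\pi)$: the substitution $u=2\theta$ together with the $2\pi$-periodicity of $\cos$ gives $\frac{1}{2\pi}\int_{-\pi}^{\pi}f(\cos 2\theta)\,\mathrm{d}\theta=\frac{1}{2\pi}\int_{-\pi}^{\pi}f(\cos u)\,\mathrm{d}u=E[f(\cos U)]$, and one takes $U$ independent of $Y$. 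Writing the power as the exponential of $\frac{n}{2}\ln(\cdot)$ then yields the claim (the logarithm is defined almost everywhere, since $\cosh 2a+\cos 2\theta$ vanishes only on a null set).

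The routine but necessary checks are the interchange of the finite spin sum, the bounded $\theta$-integral, and the Gaussian expectation: this is immediate because the integrand is bounded in $\theta$ and grows at most like $e^{cn|a|}$ with $a$ affine in $Y$, hence integrable against the Gaussian density. I expect the only genuinely delicate point to be the periodicity/substitution reduction turning $\cos 2\theta$ into $\cos U$ — one must verify that averaging over the two full periods of $\cos$ swept out by $u=2\theta$ coincides with averaging over one, so that the Jacobian factor cancels correctly; everything else is bookkeeping.
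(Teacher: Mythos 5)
Your proposal is correct and is essentially the paper's proof in a different order: the paper first uses Hubbard's transformation to realize the spins as conditionally i.i.d.\ Bernoulli given $Y$, then applies the Fourier inversion formula $P(T_n=0\mid Y=y)=\frac{1}{2\pi}\int_{-\pi}^{\pi}\varphi_{T_n\mid Y}(t\mid y)\,\mathrm{d}t$ to the integer-valued statistic $T_n$, which is exactly your insertion of $I_{\{0\}}(m)=\frac{1}{2\pi}\int_{-\pi}^{\pi}e^{i\theta m}\,\mathrm{d}\theta$. The algebraic core is also identical, since the paper's simplification $\cos^2 t+\tanh^2(a)\sin^2 t=\frac{1}{2\cosh^2 a}\left(\cosh 2a+\cos 2t\right)$ is your product-to-sum identity $\cosh(a+i\theta)\cosh(a-i\theta)=\tfrac12\left(\cosh 2a+\cos 2\theta\right)$ in disguise, and the paper performs the same substitution $u=2t$ with the same periodicity argument to pass to $U\sim\mathrm{Unif}(-\pi,\pi)$.
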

\begin{proof}
Let $Y$ be a standard Gaussian random variable. By \eqref{eq: varepsilon conditional disitribution}, we can couple $\varepsilon$ with $Y$ in such a manner that
\begin{equation}\label{eq: varepsilon conditional disitribution with Y}
P(\varepsilon=\sigma)=\frac{1}{Z_{d,\beta,h}}E\left[2^n\cosh^n\left(\sqrt{\frac{2d\beta}{n}}Y+h\right)P(\varepsilon=\sigma|Y)\right],
\end{equation}
and conditionally on $Y=y$, $\varepsilon_1, \varepsilon_2,\ldots, \varepsilon_n$ are independent and identically distributed with the conditional distribution
\begin{equation}\label{eq: varepsilon conditional distribution}
P(\varepsilon_i=1|Y=y)=1-P(\varepsilon_i=-1|Y=y)=\frac{e^{(\sqrt{\frac{2d\beta}{n}}y+h)}}{e^{(\sqrt{\frac{2d\beta}{n}}y+h)}+e^{-(\sqrt{\frac{2d\beta}{n}}y+h)}}.
\end{equation}
Let $T_n=\sum^{n/2}_{i=1}\varepsilon_i-\sum^{n}_{i=n/2+1}\varepsilon_i$. By \eqref{eq: Qn = O/Z} and \eqref{eq: varepsilon conditional disitribution with Y}, we have that
\begin{align}\label{eq: O d beta h calculate01}
O_{d,\beta,h}&=Z_{d,\beta,h}P(T_n=0)\notag\\
&=\int_{\mathbb{R}}P(T_n=0|Y=y)
2^n\cosh^n\left(\sqrt{\frac{2d\beta}{n}}y+h\right)\frac{1}{\sqrt{2\pi}}e^{-y^2/2}\,\mathrm{d}y.
\end{align}
The conditional characteristic function of $T_n$ is
\begin{align*}
\varphi_{T_n|Y}(t|y)&:=E(e^{itT_n}|Y=y)\\
&=\left[\cos^2 t+\tanh^2\left(\sqrt{\frac{2d\beta}{n}}y+h\right)\sin^2t\right]^{n/2}\\
&=\frac{1}{2^{n/2}\cosh^n(\sqrt{\frac{2d\beta}{n}}y+h)}\left[\cosh\left(2\left(\sqrt{\frac{2d\beta}{n}}y+h\right)\right)+\cos (2t)\right]^{n/2}.
\end{align*}
As $T_n$ is integer-valued, we have that
 \begin{align*}\label{eq: P T_n = 0}
&\quad P(T_n=0 |Y=y)=\frac{1}{2\pi}\int_{-\pi}^{\pi}\varphi_{T_n|Y}(t|y)\,\mathrm{d}t\notag\\
&=\frac{1}{2^{n/2}\cosh^{n}(\sqrt{\frac{2d\beta}{n}}y+h)}\frac{1}{2\pi}\int_{-\pi}^{\pi}\left(\cosh\left(2\sqrt{\frac{2d\beta}{n}}y+2h\right)+\cos(2t)\right)^{n/2}\,\mathrm{d}t\notag\\
&\xlongequal{u=2t}\frac{1}{2^{n/2}\cosh^{n}(\sqrt{\frac{2d\beta}{n}}y+h)}\frac{1}{4\pi}\int_{-2\pi}^{2\pi}\left(\cosh\left(2\sqrt{\frac{2d\beta}{n}}y+2h\right)+\cos u\right)^{n/2}\,\mathrm{d}u\notag\\
&=\frac{1}{2^{n/2}\cosh^{n}(\sqrt{\frac{2d\beta}{n}}y+h)}\frac{1}{2\pi}\int_{-\pi}^{\pi}\left(\cosh\left(2\sqrt{\frac{2d\beta}{n}}y+2h\right)+\cos u\right)^{n/2}\,\mathrm{d}u.
\end{align*}
Hence, by \eqref{eq: O d beta h calculate01}, we get that
\begin{align*}
    O_{d,\beta,h}
&=2^{n/2}\int_{\mathbb{R}}\int^{\pi}_{-\pi}\left(\cosh\left(2\sqrt{\frac{2d\beta}{n}}y+2h\right)+\cos u\right)^{n/2}\frac{1}{(2\pi)^{3/2}}e^{-\frac{y^2}{2}}\,\mathrm{d}u\mathrm{d}y.\\
&=2^{n/2}E\left(\exp\left\{\frac{n}{2}\ln\left(\cosh(2\sqrt{\frac{2d\beta}{n}}Y+2h)+\cos U\right)\right\}\right),
\end{align*}
where $U$ is independent of $Y$ and $U$ follows the uniform distribution on $(-\pi,\pi)$.
\end{proof}

Secondly, we get the asymptotic expansion of $O_{d,\beta,h}$ in the following important proposition.
\begin{proposition}\label{prop: O asymptotics even}
Let $n$ be an even number. As $n\to+\infty$, the asymptotic expansion of $O_{d,\beta,h}$ is as the four cases below:
\begin{enumerate}[(1)]
  \item For $h=0$ and $\beta<\beta_c$, we have that
  \begin{equation*}
  O_{d,\beta,h}=2^n\left(\sum^{M}_{p=0}\gamma_p^{<}n^{-p-\frac{1}{2}}+o(n^{-M-\frac{1}{2}})\right),
  \end{equation*}
  where $\gamma_0^{<}=\sqrt{\frac{2}{\pi}}(1-2d\beta)^{-\frac{1}{2}}$, and for $p\geq 1$,  $\gamma_p^{<}$ is given by \eqref{eq: gamma p two cases}.
  \item For $h=0$ and $\beta=\beta_c$, we have that
  \begin{equation*}
  O_{d,\beta,h}=2^n\left(\sum^{M}_{p=0}\gamma_p^{c}n^{-\frac{p}{2}-\frac{1}{4}}+O(n^{-\frac{M}{2}-\frac{1}{4}})\right),
  \end{equation*}
  where $\gamma_0^{c}=\frac{1}{\pi}\left(\frac{3}{4}\right)^{\frac{1}{4}}\Gamma(\frac{1}{4})$ and  for $p\geq 1$, $\gamma^c_p$ is given by \eqref{eq: critical gamma p}. 
  \item For $h=0$ and $\beta>\beta_c$, we have that
  \begin{equation*}
  O_{d,\beta,h}=2^ne^{n\varphi(t_*)}\left(\sum^{M}_{p=0}\gamma_p^{>}n^{-p-\frac{1}{2}}+o(n^{-M-\frac{1}{2}})\right),
  \end{equation*}
  where $\varphi(t_*)=\ln \cosh{t_*}-\frac{t^2_*}{4d\beta}$  and $z_{*}=t_*/(2d\beta)$ is the maximal solution of the mean-field equation \[\tanh\left(\frac{\beta}{\beta_c} z\right)=z\]
  Here,
  $\gamma_0^{>}=2\sqrt{\frac{2}{\pi}}(2d\beta)^{-1/2}\left(\frac{1}{2d\beta}-\frac{1}{\cosh^2 t_{*}}\right)^{-1/2}\cosh{t_*}$ and for $p\geq 1$, $\gamma^>_p$ is given by \eqref{eq: gamma p low teperature}.
  \item For $h\neq 0$, we have that
  \begin{equation*}
  O_{d,\beta,h}=2^ne^{n\varphi(t_*)}\left(\sum_{p=0}^{M}\gamma_p^{h\neq 0}n^{-p-1/2}+o(n^{-M-\frac{1}{2}})\right),
  \end{equation*}
  where $\varphi(t_*)=\ln\cosh t_*-\frac{(t_*-|h|)^2}{4d\beta}$ and $z_{*}=(t_{*}-|h|)/(2d\beta)$ is the maximal solution of the mean-field equation
  \[\tanh\left(\frac{\beta}{\beta_c} z+|h|\right)=z.\]
  Here, $\gamma_0^{h\neq 0}=\sqrt{\frac{2}{\pi}}(2d\beta)^{-\frac{1}{2}}\left(\frac{1}{2d\beta}-\frac{1}{\cosh^2 t_{*}}\right)^{-1/2}\cosh{t_*}$, and for $p\geq 1$, $\gamma^{h\neq 0}_p$ is given by \eqref{eq: gamma p two cases}.
\end{enumerate}
\end{proposition}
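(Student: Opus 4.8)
The plan is to carry the Laplace-method machinery of Section~\ref{sect: asymptotic expansion of Z as a series in n} over to the two-variable integral supplied by Lemma~\ref{lem: expectation representation of O d beta h}. In analogy with \eqref{eq: Z(x)}, I would set
\[O(x)=E\left[\exp\left\{\frac{1}{2x^2}\ln\left(\cosh(2\sqrt{2d\beta}\,xY+2h)+\cos U\right)\right\}\right],\]
so that $O_{d,\beta,h}=2^{n/2}O(1/\sqrt{n})$ and it suffices to expand $O(x)$ as $x\to 0^+$. Writing the expectation over the independent pair $(Y,U)$ as a double integral and substituting $t=\sqrt{2d\beta}\,xy+h$ exactly as in \eqref{eq: express Z(x) via an integral} turns $O(x)$ into
\[O(x)=\frac{1}{\sqrt{2d\beta}\,x}\frac{1}{(2\pi)^{3/2}}\int_{\mathbb{R}}\int_{-\pi}^{\pi}\exp\left\{\frac{\Psi(t,u)}{x^2}\right\}\mathrm{d}u\,\mathrm{d}t,\]
where $\Psi(t,u)=\tfrac12\ln(\cosh 2t+\cos u)-\tfrac{(t-h)^2}{4d\beta}$. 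The task is thus a genuinely two-dimensional Laplace expansion.

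The key structural observation, which I would establish first, is that the identity $\cosh 2t+1=2\cosh^2 t$ gives
\[\Psi(t,0)=\varphi(t)+\tfrac12\ln 2,\]
with $\varphi$ the function from \eqref{eq: defn varphi t}. Since $\cos u$ is maximized at the interior point $u=0$, every maximizer of $\Psi$ lies on the line $u=0$, so the $t$-maximization is exactly the one already solved in Lemma~\ref{lem: the maximum point of varphi}: the maxima of $\Psi$ are $(t_*,0)$ in cases (1), (2) and (4), and the symmetric pair $(\pm t_*,0)$ in case (3). Moreover $\partial_u\Psi$ has $\sin u$ as a factor, so $\partial_t\partial_u\Psi$ also has $\sin u$ as a factor and hence vanishes on $\{u=0\}$; thus the Hessian at every maximizer is diagonal, with $\partial_u^2\Psi(\pm t_*,0)=-1/(4\cosh^2 t_*)$ always nondegenerate, while the $t$-direction inherits the exact (non)degeneracy of $\varphi$ at $t_*$. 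This already predicts the shape of the answer: the exponential factor $e^{n\varphi(t_*)}$ and the factor $\cosh t_*$ (which will arise as the width $\sqrt{2\pi/(-\partial_u^2\Psi)}$ of the Gaussian integral in $u$) appear in every case, and case (3) acquires the extra factor $2$ from its two symmetric maxima.

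From here I would follow Section~\ref{sect: asymptotic expansion of Z as a series in n} almost verbatim. First, truncate each integral to a neighbourhood of size $O(-x\ln x)$ of the relevant maximizer (of size $O(\sqrt{-x\ln x})$ in the degenerate $t$-direction of the critical case) and bound the remainder by $Ce^{-c(\ln x)^2}$, using the quadratic and quartic lower bounds on $\Psi_{\max}-\Psi$ near the maximizer together with the linear decay of $\ln\cosh$ far away, exactly as in \eqref{eq: upper bounds for check R from Z}, \eqref{eq: upper bounds for tilde R from Z} and \eqref{eq: R> negligible}. On the main region, Taylor-expand $\Psi$ about the maximizer, rescale by $u=x s_1$ together with $t-(\pm t_*)=x s_2$ in the nondegenerate cases (1), (3), (4) but $t=x^{1/2}s_2$ in the critical case (2) where $\partial_t^2\Psi=0$, expand the exponential of the higher-order terms as a power series in $x$, and integrate term by term. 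The $u$-integral then produces Gaussian moments $\int s_1^{2p}e^{-s_1^2/(8\cosh^2 t_*)}\,\mathrm{d}s_1$ of the type \eqref{eq: truncated integral ef2s2sp}; the $t$-integral produces the same type of Gaussian moments in the nondegenerate cases and the quartic moments $\int s_2^{2p}e^{-s_2^4/12}\,\mathrm{d}s_2$ of \eqref{eq: truncated e-s4s2p integral} in the critical case. Collecting powers of $x=n^{-1/2}$ yields the stated series, and matching $p=0$ reproduces each $\gamma_0$ in the statement, the higher $\gamma_p$ being read off as the displayed combinatorial coefficient formulas.

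The genuinely new difficulty, relative to the one-dimensional computation for $Z_{d,\beta,h}$, is that the integral is two-dimensional and the two variables are coupled inside $\ln(\cosh 2t+\cos u)$. At leading order this is harmless, because the diagonal Hessian makes the integral factor into a pure Gaussian in $u$ times the Section~\ref{sect: asymptotic expansion of Z as a series in n} integral in $t$; this is precisely why $\gamma_0$ factors as a width in $u$ times a $Z$-type constant in $t$. For the higher-order coefficients, however, the mixed monomials $t^a u^b$ in the Taylor expansion of $\Psi$ contribute, so the term-by-term integration must track a double multi-index sum instead of the single sum of Section~\ref{sect: asymptotic expansion of Z as a series in n}, and in the critical case the anisotropic scaling $u\sim x$ versus $t\sim x^{1/2}$ must be reconciled so that the two families of moment integrals recombine into a single series in $n^{-1/2}$. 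Verifying that these cross terms assemble into exactly the claimed coefficients, with uniformly controlled error, is the main bookkeeping obstacle; the critical case (2), with its degenerate quartic $t$-direction sitting beside the Gaussian $u$-direction, is the most delicate point.
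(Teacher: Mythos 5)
Your proposal follows essentially the same route as the paper's proof: the paper likewise sets $W(x)$ (your $O(x)$, up to a trivial $2^{n/2}$ normalization) so that $O_{d,\beta,h}=2^nW(1/\sqrt n)$, reduces to a two-dimensional Laplace expansion of $\psi(t,u)=\frac12\ln(\frac12\cosh 2t+\frac12\cos u)-\frac{(t-h)^2}{4d\beta}$, uses exactly your key observation that $\psi(t,u)\le\psi(t,0)=\varphi(t)$ so the maximizers sit on $u=0$ with diagonal Hessian and $\partial_u^2\psi=-1/(4\cosh^2 t_*)$, and then applies the same truncation, anisotropic rescaling ($t\sim x^{1/2}$, $u\sim x$ at criticality; $t-t_*\sim x$, $u\sim x$ otherwise), and term-by-term Gaussian/quartic moment integration, with the factor $2$ in case (3) coming from the two symmetric maxima. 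The bookkeeping issues you flag (mixed monomials $t^pu^q$ and the recombination of the two scalings into one series) are exactly what the paper's coefficient formulas \eqref{eq: beta s v critical}, \eqref{eq: critical gamma p}, \eqref{eq: gamma p two cases} and \eqref{eq: gamma p low teperature} resolve.
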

\begin{proof}
    We define
\begin{equation*}
W(x)=E\left(\exp\left\{\frac{1}{2x^2}
\ln\left(\frac{1}{2}\cosh(2\sqrt{2d\beta}Yx+2h)+\frac{1}{2}\cos U\right)\right\}\right).
\end{equation*}
Then, by Lemma~\ref{lem: expectation representation of O d beta h} we have that
\begin{equation}\label{eq: O even via W}
O_{d,\beta,h}=2^{n}W(1/\sqrt{n}).
\end{equation}
In order to find the asymptotic expansion of $O_{d,\beta,h}$ as $n\to +\infty$, it suffices to find the asymptotic expansion of $W(x)$  as  $x\to 0^+$. For $x\neq 0$, we have that
\begin{align*}
W(x)
&=\frac{1}{(2\pi)^{\frac{3}{2}}}\int_{-\infty}^{+\infty}\int_{-\pi}^{\pi}e^{\frac{1}{2x^2}\ln(\frac{1}{2}\cosh(2\sqrt{2d\beta}yx+2h)+\frac{1}{2}\cos u)}e^{-y^2/2}\,\mathrm{d}y\mathrm{d}u\\
&\xlongequal{t=\sqrt{2d\beta}yx+h}\frac{1}{(2\pi)^{\frac{3}{2}}\sqrt{2d\beta}|x|}\int_{-\infty}^{+\infty}\int_{-\pi}^{\pi}e^{\psi(t,u)/x^2}\,\mathrm{d}t\mathrm{d}u,
\end{align*}
where
\begin{equation}\label{eq: defn psi}
\psi(t,u)=\frac{1}{2}\left(\ln\left(\frac{1}{2}\cosh(2t)+\frac{1}{2}\cos u\right)-\frac{(t-h)^2}{2d\beta}\right).
\end{equation}
Thus, we will get  the asymptotic expansion of $W(x)$ via Laplace's method for multiple integrals. Fulks and Sather introduced this method in \cite{FulksSatherMR0138945}. However, \cite{FulksSatherMR0138945} did not include the critical case $\beta=\beta_c$ and $h=0$, and their other result is not useful for us to calculate the coefficients in the asymptotic expansion of $W(x)$. Therefore, we decide to get the asymptotic expansion of $W(x)$ by ourselves. In the following, we provide the details of the asymptotic expansion of $W(x)$ for the critical case $h=0$ and $\beta=\beta_{c}$, because this case is not covered by the paper \cite{FulksSatherMR0138945} and we also think that  it is the most technical. For the other cases, we only briefly present the proof.

For the critical case, $\beta=\beta_c$ and $h=0$, we have that for all $t\in \mathbb{R}$ and $u\in (-\pi,\pi)$,
$$\psi(t,u)\leq \psi(t,0)=\ln\cosh t-\frac{t^2}{4d\beta}=\varphi(t),$$
where $\varphi(t)$ is defined in \eqref{eq: defn varphi t}. Hence, by Lemma~\ref{lem: the maximum point of varphi}, $(0,0)$ is the unique maximum point of $\psi(t,u)$. Thus, we truncate $W(x)$ as follows: for $x\in (0,0.01)$,
\begin{equation}\label{eq: truncation of W(x) critical}
W(x)=\widetilde{W}^c(x)+\widetilde{R}^c(x),
\end{equation}
where
\begin{equation}\label{eq: even tilde W critical}
    \widetilde{W}^c(x)=\frac{1}{(2\pi)^{\frac{3}{2}}x}\int_{-\sqrt{-x\ln x}}^{\sqrt{-x\ln x}}\int_{x\ln x}^{-x\ln x}e^{\psi(t,u)/x^2}\,\mathrm{d}t\mathrm{d}u.
\end{equation}
For sufficienty small $t$ and $u$, the Taylor expansion of $\psi(t,u)$ is
\begin{equation}\label{eq: psi taylor expansion critical}
\psi(t,u)=-\frac{1}{8}u^2-\frac{1}{12}t^4+\sum_{\substack{p,q\geq 0\\p+2q\geq 3}}\alpha_{2p,2q}t^{2p}u^{2q},
\end{equation}
where for $p+2q\geq 3$,
\begin{align*}
\alpha_{2p,2q}&=2^{2p-1}(-1)^q\\
&\quad\times\sum^{p+q}_{n=1}\frac{(-1)^{n-1}}{n2^n}
\sum_{\substack{m_i\geq 1, 1\leq i\leq n\\ \sum^n_{i=1}m_i=p+q}}\prod^n_{i=1}\frac{1}{(2m_i)!}\sum_{\substack{j_i=m_i\,\,or\,\,0\\ k_i=m_i-j_i}}I_{(p,q)}(\sum^n_{i=1}j_i, \sum^n_{i=1}k_i).
\end{align*}
Hence, there exists $\delta>0$ and $\kappa>0$ such that for $|t|\leq\delta$ and $|u|\leq\delta$, we have that
\[\psi(t,u)\leq-\kappa u^2-\kappa t^4\leq-\kappa u^2\]
and that for $|t|\leq\delta$ and $|u|\in[\delta,\pi]$,
\[\psi(t,u)\leq \psi(t,\delta)\leq -\kappa\delta^2.\]
Hence, for sufficiently small $x$, we have that
\begin{align*}
\widetilde{R}^c(x)&=\frac{1}{(2\pi)^{\frac{3}{2}}x}\iint_{|t|>\sqrt{-x\ln x}, |u|\leq \pi}e^{\psi(t,u)/x^2}\,\mathrm{d}t\mathrm{d}u\notag\\
&\quad +\frac{1}{(2\pi)^{\frac{3}{2}}x}\iint_{|t|<\sqrt{-x\ln x},-x\ln x<|u|<\pi}e^{\psi(t,u)/x^2}\,\mathrm{d}t\mathrm{d}u\\
&\leq \frac{1}{(2\pi)^{\frac{1}{2}}x}\int_{|t|>\sqrt{-x\ln x}}e^{\varphi(t)/x^2}\,\mathrm{d}t\\
&\quad+\frac{1}{(2\pi)^{\frac{3}{2}}x}\iint_{|t|\leq\delta,-x\ln x<|u|<\delta}e^{-\kappa u^2/x^2}\,\mathrm{d}t\mathrm{d}u\\
&\quad+\frac{1}{(2\pi)^{\frac{3}{2}}x}\iint_{|t|\leq\delta,\delta\leq|u|\leq\pi}e^{-\kappa\delta^2/x^2}\,\mathrm{d}t\mathrm{d}u.
\end{align*}
By \eqref{eq: truncation of Z(x) critical 02} and \eqref{eq: upper bounds for tilde R from Z}, there exist $0<c,C<+\infty$ such that the first term is bounded from above by $Ce^{-c(\ln x)^2}$. By the tail bound for Gaussian distribution, there exist $0<c,C<+\infty$ such that the second term is bounded from above by $Ce^{-c(\ln x)^2}$. And there exists $c>0$ such that the third term is much smaller than $e^{-c(\ln x)^2}$ as $x\to 0^+$. Hence, there exist $0<c,C<+\infty$ such that the third term is bounded from above by $Ce^{-c(\ln x)^2}$. Hence, there exist $0<c,C<+\infty$ such that
\begin{equation}\label{eq: upper bounds for tilde R c}
\widetilde{R}^c(x)\leq Ce^{-c(\ln x)^2}.
\end{equation}
Hence, $\widetilde{R}^c(x)$ is much smaller than  $x^{M}$  for any $M\geq 1$.

For the main part $\widetilde{W}^c(x)$, by \eqref{eq: even tilde W critical} and \eqref{eq: psi taylor expansion critical}, we have that
\begin{align}\label{eq: critical expansion of tilde W via integral}
\widetilde{W}^c(x)&=\frac{1}{(2\pi)^{\frac{3}{2}}x}\int_{-\sqrt{-x\ln x}}^{\sqrt{-x\ln x}}\int_{x\ln x}^{-x\ln x}e^{(-\frac{1}{8}u^2-\frac{1}{12}t^4+\sum_{\substack{p,q\geq 0\\p+2q\geq 3}}\alpha_{2p,2q}t^{2p}u^{2q})/x^2}\,\mathrm{d}t\mathrm{d}u\notag\\
&\xlongequal{t=s\sqrt{x}, u=vx} \frac{\sqrt{x}}{(2\pi)^{\frac{3}{2}}}\int_{-\sqrt{-\ln x}}^{\sqrt{-\ln x}}\int_{\ln x}^{-\ln x}e^{-\frac{1}{12}s^4-\frac{1}{8}v^2}\notag\\
&\quad\quad\quad\quad\quad\quad\times e^{\sum_{p,q\geq 0,p+2q\geq 3}\alpha_{2p,2q}s^{2p}v^{2q}x^{p+2q-2}}\,\mathrm{d}s\mathrm{d}v\notag\\
&= \frac{\sqrt{x}}{(2\pi)^{\frac{3}{2}}}\int_{-\sqrt{-\ln x}}^{\sqrt{-\ln x}}\int_{\ln x}^{-\ln x}e^{-\frac{1}{12}s^4-\frac{1}{8}v^2}\notag\\
&\quad\quad\quad\quad\quad\quad\times\left(1+\sum_{p=1}^{M}\beta^c_{p}(s,v)x^{p}+\widetilde{r}^c_{M}(s,v,x)\right)\,\mathrm{d}s\mathrm{d}v,
\end{align}
where
\begin{equation}\label{eq: beta s v critical}
    \beta^c_p(s,v)=\sum_{k=1}^{p}\frac{1}{k!}\sum_{\substack{p_i+2q_i\geq 3, p_i\geq 0,q_i\geq 0, 1\leq i\leq k,\\\sum^k_{i=1}p_i+2\sum^k_{i=1}q_i-2k=p}}
\prod_{j=1}^{k}\alpha_{2p_j,2q_j}s^{2\sum_{j=1}^{k}p_j}v^{2\sum_{j=1}^{k}q_j},
\end{equation}
and there exists $C=C(M)<+\infty$ such that for $|s|<\sqrt{-\ln x}$ and $|v|<-\ln x$, we have that
\[|\widetilde{r}^c_{M}(s,v,x)|\leq Cx^{M+1}(-\ln x)^{C}=o(x^{M}).\]
Note that for non-negative integers $p$ and $q$,
\begin{align}\label{eq: truncated e-s4s2pv2q integral}
\int_{-\sqrt{-\ln x}}^{\sqrt{-\ln x}}\int_{\ln x}^{-\ln x}&e^{-\frac{1}{12}s^{4}-\frac{1}{8}v^{2}}s^{2p}v^{2q}\,\mathrm{d}s\mathrm{d}v\notag\\ &=\int_{-\infty}^{+\infty}\int_{-\infty}^{+\infty}e^{-\frac{1}{12}s^{4}-\frac{1}{8}v^{2}}s^{2p}v^{2q}\,\mathrm{d}s\mathrm{d}v+o(x^{M})\notag\\
&=3^{\frac{p}{2}+\frac{1}{4}}2^{p+3q+1}\Gamma\left(\frac{p}{2}+\frac{1}{4}\right)\Gamma\left(q+\frac{1}{2}\right)+o(x^{M}).
\end{align}
Combining \eqref{eq: truncation of W(x) critical}, \eqref{eq: upper bounds for tilde R c}, \eqref{eq: critical expansion of tilde W via integral}, \eqref{eq: beta s v critical} and \eqref{eq: truncated e-s4s2pv2q integral}, for $M\geq 1$, we have that
\begin{equation*}
W(x)=\sum_{p=0}^{M}\gamma^c_px^{p+\frac{1}{2}}+o(x^{M+\frac{1}{2}}),
\end{equation*}
where $\gamma^c_0=\frac{1}{\pi}\left(\frac{3}{4}\right)^{\frac{1}{4}}\Gamma(\frac{1}{4})$ and for $p\geq 1$,

\begin{align}\label{eq: critical gamma p}
\gamma^c_p&=\frac{1}{(2\pi)^{3/2}}\sum_{k=1}^{p}\frac{1}{k!}\sum_{\substack{p_i+2q_i\geq 3, p_i\geq 0,q_i\geq 0, 1\leq i\leq k,\\\sum^k_{i=1}p_i+2\sum^k_{i=1}q_i-2k=p}}\prod_{j=1}^{k}\alpha_{2p_j,2q_j}\notag\\
&\quad\times 3^{\frac{1}{2}\sum^k_{i=1}p_i+\frac{1}{4}}2^{\sum^k_{i=1}p_i+3\sum^k_{i=1}q_i+1}\Gamma\left(\frac{1}{2}\sum_{j=1}^{k}p_j+\frac{1}{4}\right)\Gamma\left(\sum_{j=1}^{k}q_j+\frac{1}{2}\right).
\end{align}
By \eqref{eq: psi taylor expansion critical}, we have that $\alpha_{2,2}=1/8$ and $\alpha_{6,0}=1/45$. Then, by \eqref{eq: critical gamma p}, we have that $\gamma^c_{1}=\frac{7\cdot 3^{3/4}\Gamma(\frac{3}{4})}{5\cdot 2^{\frac{1}{2}}\pi}$. Thus by \eqref{eq: O even via W}, Proposition~\ref{prop: O asymptotics even} (2) holds.

For the case $h=0$, $\beta<\beta_c$ or the case $h>0$, we have that for all $t\in \mathbb{R}$ and $u\in (-\pi,\pi)$,
$$\psi(t,u)\leq \psi(t,0)=\ln\cosh t-\frac{(t-h)^2}{4d\beta}=\varphi(t),$$
where $\varphi(t)$ is defined in \eqref{eq: defn varphi t}. Hence, by Lemma~\ref{lem: the maximum point of varphi}, $(t_*,0)$ is the unique maximum point of $\psi(t,u)$. In particular, for the case $h=0, \beta<\beta_c$, $t_*=0$; and for the case $h\neq 0$, $t_*>0$. Thus, we truncate $W(x)$ as follows: for $x\in (0,0.01)$,
\begin{equation}\label{eq: truncation of W(x)}
W(x)=\widetilde{W}^{\S}(x)+\widetilde{R}^{\S}(x),
\end{equation}
where
\begin{equation*}\label{eq: even tilde W tow cases}
 \widetilde{W}^{\S}(x)=\frac{1}{(2\pi)^{\frac{3}{2}}\sqrt{2d\beta}x}\int_{t_*+x\ln x}^{t_*-x\ln x}\int_{x\ln x}^{-x\ln x}e^{\psi(t,u)/x^2}\,\mathrm{d}t\mathrm{d}u.
\end{equation*}
The Taylor's expansion of the function $\psi(t,u)$ at the maximum points $(t_*,0)$ is
\begin{equation}\label{eq: Taylor expansion of psi at t star 0}
\psi(t,u)=\psi(t_{*},0)+\sum_{p+q\geq 2}\alpha_{p,q}(t-t_{*})^pu^q,
\end{equation}
where
\begin{equation}\label{eq: taylor coe high temperature and with field}
2\alpha_{2,0}=\frac{1}{\cosh^2 t_{*}}-\frac{1}{2d\beta}<0,\,\,\alpha_{1,1}=0,\,\,2\alpha_{0,2}=-\frac{1}{4}\frac{1}{\cosh^2 t_{*}}.
\end{equation}
Similarly, there exist $0<c,C<+\infty$ such that
\begin{equation}\label{eq: upper bounds for tilde R S}
\widetilde{R}^{\S}(x)\leq Ce^{-c(\ln x)^2}.
\end{equation}
And by \eqref{eq: Taylor expansion of psi at t star 0} we have
\begin{align}\label{eq: expansion of tilde W S via integral}
\widetilde{W}^{\S}(x)&=\frac{1}{(2\pi)^{\frac{3}{2}}\sqrt{2d\beta}x}\int_{t_*+x\ln x}^{t_*-x\ln x}\int_{x\ln x}^{-x\ln x}e^{\frac{1}{x^2}(\psi(t_*,0)+\sum_{p+q\geq 2}\alpha_{p,q}(t-t_*)^pu^q)}\,\mathrm{d}t\mathrm{d}u\notag\\
&\xlongequal{t=t_*+sx, u=vx} \frac{xe^{\psi(t_*,0)/x^2}}{(2\pi)^{\frac{3}{2}}\sqrt{2d\beta}}\int_{\ln x}^{-\ln x}\int_{\ln x}^{-\ln x}e^{\alpha_{2,0}s^2+\alpha_{0,2}v^2}\notag\\
&\quad\quad\quad\quad\quad\quad\times e^{\sum_{p,q\geq 0,p+q\geq 3}\alpha_{p,q}s^{p}v^{q}x^{p+q-2}}\,\mathrm{d}s\mathrm{d}v\notag\\
 &= \frac{xe^{\psi(t_*,0)/x^2}}{(2\pi)^{\frac{3}{2}}\sqrt{2d\beta}}\int_{\ln x}^{-\ln x}\int_{\ln x}^{-\ln x}e^{\alpha_{2,0}s^2+\alpha_{0,2}v^2}\notag\\
 &\quad\quad\quad\quad\quad\quad\times\left(1+\sum_{p=1}^{M}\beta_{p}^{\S}(s,v)x^{p}+\widetilde{r}_{M}^{\S}(s,v,x)\right)\,\mathrm{d}s\mathrm{d}v,
\end{align}
 where
 \begin{equation}\label{eq: beta S s v}
     \beta_p^{\S}(s,v)=\sum_{k=1}^{p}\frac{1}{k!}\sum_{\substack{p_i+q_i\geq 3, p_i\geq 0,q_i\geq 0, 1\leq i\leq k,\\\sum^k_{i=1}p_i+\sum^k_{i=1}q_i-2k=p}}
 \prod_{j=1}^{k}\alpha_{p_j,q_j}s^{\sum_{j=1}^{k}p_j}v^{\sum_{j=1}^{k}q_j}
 \end{equation}
 and there exists $C=C(M)<+\infty$ such that for $|s|<-\ln x$ and $|v|<-\ln x$, it holds that
 \[\widetilde{r}_{M}^{\S}(s,v,x)=o(x^{M}).\]
Note that
\begin{align}\label{eq: truncated integral even}
&\int_{\ln x}^{-\ln x}\int_{\ln x}^{-\ln x} e^{\alpha_{2,0}s^2+\alpha_{0,2}v^{2}}s^{p}v^{q}\,\mathrm{d}s\mathrm{d}v\notag\\ &=\int_{-\infty}^{+\infty}\int_{-\infty}^{+\infty}e^{\alpha_{2,0}s^2+\alpha_{0,2}v^{2}}s^{p}v^{q}\,\mathrm{d}s\mathrm{d}v+o(x^{M})\notag\\
&=1_{p\text{ and }q\text{ are even}}\times(-\alpha_{2,0})^{-\frac{p+1}{2}}(-\alpha_{0,2})^{-\frac{q+1}{2}}\Gamma\left(\frac{p+1}{2}\right)\Gamma\left(\frac{q+1}{2}\right)+o(x^{M}).
\end{align}
So, by \eqref{eq: truncation of W(x)}, \eqref{eq: upper bounds for tilde R S}, \eqref{eq: expansion of tilde W S via integral}, \eqref{eq: beta S s v} and \eqref{eq: truncated integral even}, for positive integer $M\geq 1$, we have that
\begin{equation*}\label{eq: W two case}
    W(x)=e^{\frac{\psi(t_*,0)}{x^2}}\left[\sum^M_{p=0}\gamma_p^{\S}x^{2p+1}+o(x^{2M+1})\right],
\end{equation*}
where $\psi(t_*,0)=\varphi(t_*)$,
\begin{align*}
\gamma_0^{\S}&=\frac{1}{(2\pi)^{\frac{3}{2}}\sqrt{2d\beta}}(-\alpha_{2,0})^{-1/2}(-\alpha_{0,2})^{-1/2}\Gamma^2\left(\frac{1}{2}\right)\notag\\
&=\sqrt{\frac{2}{\pi}}(2d\beta)^{-1/2}\left(\frac{1}{2d\beta}-\frac{1}{\cosh^2t_*}\right)^{-1/2}\cosh t_*,
\end{align*}
\begin{align}\label{eq: gamma p two cases}
\gamma_p^{\S}&=\frac{1}{(2\pi)^{\frac{3}{2}}\sqrt{2d\beta}}\sum^{2p}_{k=1}\frac{1}{k!}\sum_{\substack{p_i+q_i\geq 3, p_i\geq 0,q_i\geq 0, 1\leq i\leq k,\\\sum^k_{i=1}(p_i+q_i)=2(k+p),\\ \sum^k_{i=1}p_i\text{ even}, \sum^k_{i=1}q_i\text{ even}}}\prod_{j=1}^{k}\alpha_{p_j,q_j}\notag\\
&\qquad \times (-\alpha_{2,0})^{-\frac{1}{2}(\sum^k_{i=1}p_i+1)}(-\alpha_{0,2})^{-\frac{1}{2}(\sum^k_{i=1}q_i+1)}\notag\\
&\qquad \times \Gamma\left((\sum^k_{i=1}p_i+1)/2\right)\Gamma\left((\sum^k_{i=1}q_i+1)/2\right), \quad p\geq 1.
\end{align}
Thus, by \eqref{eq: O even via W} Proposition~\ref{prop: O asymptotics even} (1) and (4) hold.

For the case $h=0, \beta>\beta_c$, we still have that for all $t\in \mathbb{R}$ and $u\in (-\pi,\pi)$,
$$\psi(t,u)\leq \psi(t,0)=\ln\cosh t-\frac{t^2}{4d\beta}=\varphi(t),$$
where $\varphi(t)$ is defined in \eqref{eq: defn varphi t}. Hence, by Lemma~\ref{lem: the maximum point of varphi}, $\psi(t,u)$ has two maximum point $(t_*,0)$ and $(-t_*,0)$. And for fixed $u\in(-\pi,\pi)$, $\psi(t,u)$ is a even function for $t$.  Hence, for  $x\in (0,0.01)$, we truncate $W(x)$ as follows:
\begin{equation}\label{eq: truncation of W(x) sup}
W(x)=\frac{1}{2^{1/2}\pi^{3/2}\sqrt{2d\beta}x}\int^{+\infty}_0\int^{\pi}_{-\pi}e^{\psi(t,u)/x^2}\,\mathrm{dt}\mathrm{du}=\widetilde{W}^{>}(x)+\widetilde{R}^{>}(x),
\end{equation}
where
\[\widetilde{W}^{>}(x)=\frac{1}{2^{\frac{1}{2}}\pi^{\frac{3}{2}}\sqrt{2d\beta}x}\int_{t_*+x\ln x}^{t_*-x\ln x}\int_{x\ln x}^{-x\ln x}e^{\psi(t,u)/x^2}\,\mathrm{d}t\mathrm{d}u.\]
The Taylor's expansion of the function $\psi(t,u)$ at the maximum points $(t_*,0)$ is the same as in \eqref{eq: Taylor expansion of psi at t star 0} and \eqref{eq: taylor coe high temperature and with field}.
So similarly, there exists $c>0$ and $C<+\infty$ such that
\begin{equation}\label{eq: upper bounds for tilde R >}
\widetilde{R}^{>}(x)\leq Ce^{-c(\ln x)^2}.
\end{equation}
And
\begin{align}\label{eq: expansion of tilde W via integral sup}
\widetilde{W}^>(x)&=\frac{1}{2^{\frac{1}{2}}\pi^{\frac{3}{2}}\sqrt{2d\beta}x}\int_{t_*+x\ln x}^{t_*-x\ln x}\int_{x\ln x}^{-x\ln x}e^{\frac{1}{x^2}(\psi(t_*,0)+\sum_{p+q\geq 2}\alpha_{p,q}(t-t_*)^pu^q)}\,\mathrm{d}t\mathrm{d}u\notag\\
&\xlongequal{t=t_*+sx, u=vx} \frac{xe^{\psi(t_*,0)/x^2}}{2^{\frac{1}{2}}\pi^{\frac{3}{2}}\sqrt{2d\beta}}\int_{\ln x}^{-\ln x}\int_{\ln x}^{-\ln x}e^{\alpha_{2,0}s^2+\alpha_{0,2}v^2}\notag\\
&\quad\quad\quad\quad\quad\quad\times e^{\sum_{p,q\geq 0,p+q\geq 3}\alpha_{p,q}s^{p}v^{q}x^{p+q-2}}\,\mathrm{d}s\mathrm{d}v\notag\\
&= \frac{xe^{\psi(t_*,0)/x^2}}{2^{\frac{1}{2}}\pi^{\frac{3}{2}}\sqrt{2d\beta}}\int_{\ln x}^{-\ln x}\int_{\ln x}^{-\ln x}e^{\alpha_{2,0}s^2+\alpha_{0,2}v^2}\notag\\
&\quad\quad\quad\quad\quad\quad\times
\left(1+\sum_{p=1}^{M}\beta_{p}^{>}(s,v)x^{p}+\widetilde{r}_{M}^{>}(s,v,x)\right)\,\mathrm{d}s\mathrm{d}v,
\end{align}
where
\begin{equation}\label{eq: beta > s v}
     \beta_p^{>}(s,v)=\sum_{k=1}^{p}\frac{1}{k!}\sum_{\substack{p_i+q_i\geq 3, p_i\geq 0,q_i\geq 0, 1\leq i\leq k,\\\sum^k_{i=1}p_i+\sum^k_{i=1}q_i-2k=p}}
 \prod_{j=1}^{k}\alpha_{p_j,q_j}s^{\sum_{j=1}^{k}p_j}v^{\sum_{j=1}^{k}q_j}
 \end{equation}
 and there exists $C=C(M)<+\infty$ such that for $|s|<-\ln x$ and $|v|<-\ln x$, we have that
 \[\widetilde{r}_{M}^{>}(s,v,x)=o(x^{M}).\]
Then by \eqref{eq: truncated integral even},  \eqref{eq: truncation of W(x) sup}, \eqref{eq: upper bounds for tilde R >}, \eqref{eq: expansion of tilde W via integral sup} and \eqref{eq: beta > s v}, for positive integer $M\geq 1$, we have that
\begin{equation*}
    W(x)=e^{\frac{\psi(t_*,0)}{x^2}}\left[\sum^M_{p=0}\gamma^>_px^{2p+1}+o(x^{2M+1})\right],
\end{equation*}
where $\psi(t_*,0)=\varphi(t_*)$,
\begin{align*}
\gamma^>_0&=\frac{1}{2^{\frac{1}{2}}\pi^{\frac{3}{2}}\sqrt{2d\beta}}(-\alpha_{2,0})^{-1/2}(-\alpha_{0,2})^{-1/2}\Gamma^2(\frac{1}{2})\\ &=2\sqrt{\frac{2}{\pi}}(2d\beta)^{-1/2}(\frac{1}{2d\beta}-\frac{1}{\cosh^2t_*})^{-1/2}\cosh t_*,
\end{align*}
\begin{align}\label{eq: gamma p low teperature}
\gamma_p^{>}&=\frac{1}{2^{\frac{1}{2}}\pi^{\frac{3}{2}}\sqrt{2d\beta}}\sum^{2p}_{k=1}\frac{1}{k!}\sum_{\substack{p_i+q_i\geq 3, p_i\geq 0,q_i\geq 0, 1\leq i\leq k,\\\sum^k_{i=1}(p_i+q_i)=2(k+p),\\ \sum^k_{i=1}p_i\text{ even}, \sum^k_{i=1}q_i\text{ even}}}\prod_{j=1}^{k}\alpha_{p_j,q_j}\notag\\
&\qquad \times (-\alpha_{2,0})^{-\frac{1}{2}(\sum^k_{i=1}p_i+1)}(-\alpha_{0,2})^{-\frac{1}{2}(\sum^k_{i=1}q_i+1)}\notag\\
&\qquad \times \Gamma\left(\frac{1}{2}(\sum^k_{i=1}p_i+1)\right)\Gamma\left(\frac{1}{2}(\sum^k_{i=1}q_i+1)\right), \,\, p\geq 1.
\end{align}
Thus, by \eqref{eq: O even via W} Proposition~\ref{prop: O asymptotics even} (3) holds.
\end{proof}

Therefore, by \eqref{eq: Qn = O/Z}, Theorem~\ref{thm: normalizing constant expansion} and Proposition~\ref{prop: O asymptotics even}, we have the following corollary:

\begin{corollary}\label{cor: Q asymptotics even}
Let $n$ be even number. As $n\to+\infty$, the asymptotic expansion of $Q_n$ are as follows:
\begin{enumerate}[(1)]
  \item For $h=0$ and $\beta<\beta_c$, we have that
  \begin{equation*}
  Q_n=\sum^{M}_{p=0}H_p^{<}n^{-p-\frac{1}{2}}+o(n^{-M-\frac{1}{2}}),
  \end{equation*}
  where $H^<_0=\sqrt{\frac{2}{\pi}}$.
  \item For $h=0$ and $\beta=\beta_c$, we have that
  \begin{equation*}
  Q_n=\sum^{M}_{p=0}H_p^{c}n^{-\frac{p}{2}-\frac{1}{2}}+o(n^{-\frac{M}{2}-\frac{1}{2}}),
  \end{equation*}
  where $H_0^{c}=\sqrt{\frac{2}{\pi}}$ and $H_1^{c}=\frac{2\cdot 3^{\frac{1}{2}}\cdot \pi^{\frac{1}{2}}}{(\Gamma(\frac{1}{4}))^2}$.
  \item For $h=0$ and $\beta>\beta_c$, we have that
  \begin{equation*}
   Q_n=\sum^{M}_{p=0}H_p^{>}n^{-p-\frac{1}{2}}+o(n^{-M-\frac{1}{2}}),
  \end{equation*}
  where $H^>_0=\sqrt{\frac{2}{\pi}}\cosh t_*$ and $z_{*}=\frac{t_*}{2d\beta}$ is the maximal solution of the mean-field equation $\tanh\left(\frac{\beta}{\beta_c} z\right)=z$.
  \item For $h\neq 0$, we have that
   \begin{equation*}
   Q_n=\sum^{M}_{p=0}H_p^{h\neq 0}n^{-p-\frac{1}{2}}+o(n^{-M-\frac{1}{2}}),
  \end{equation*}
  where $H^{h\neq 0}_0=\sqrt{\frac{2}{\pi}}\cosh t_*$ and $z_{*}=\frac{t_*-|h|}{2d\beta}$ is the maximal solution of the mean-field equation
  $$\tanh\left(\frac{\beta}{\beta_c} z+|h|\right)=z.$$
\end{enumerate}
\end{corollary}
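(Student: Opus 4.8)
The plan is to prove Corollary~\ref{cor: Q asymptotics even} by dividing the two asymptotic expansions already established. By \eqref{eq: Qn = O/Z}, for even $n$ we have $Q_n = O_{d,\beta,h}/Z_{d,\beta,h}$, so it suffices to combine the expansion of $Z_{d,\beta,h}$ from Theorem~\ref{thm: normalizing constant expansion} with the expansion of $O_{d,\beta,h}$ from Proposition~\ref{prop: O asymptotics even}, treating the four regimes one by one. The first observation is that in each regime the numerator and denominator carry the same exponential prefactor: in cases (1) and (2) this is simply $2^n$ (here $t_*=0$, so $\varphi(t_*)=0$), while in cases (3) and (4) it is $2^n e^{n\varphi(t_*)}$. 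Either way the prefactor cancels in the quotient, leaving a ratio of two genuine asymptotic series. In cases (1), (3), (4) both series run in integer powers of $n^{-1}$, with the numerator carrying an extra $n^{-1/2}$; in the critical case (2) both run in powers of $n^{-1/2}$.

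The heart of the argument is then the standard fact that the reciprocal of an asymptotic series with nonzero leading coefficient is again an asymptotic series. Concretely I would factor out the leading monomials, writing the denominator (after removing the prefactor) as $e_0\, n^{\alpha}\bigl(1+\sum_{p\ge 1}(e_p/e_0)\,n^{-p\theta}\bigr)$ with $\theta\in\{1,1/2\}$ depending on the case, and similarly for the numerator. Since $e_0^{<}=(1-2d\beta)^{-1/2}$, $e_0^{c}=\frac{1}{\sqrt{2\pi}}(3/4)^{1/4}\Gamma(1/4)$, and the corresponding $e_0^{>}$, $e_0^{h\ne 0}$ are all strictly positive, the reciprocal expansion is obtained by substituting into $(1+u)^{-1}=\sum_{k\ge 0}(-u)^k$ and collecting powers. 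Multiplying the two series term by term yields $Q_n=\sum_{p=0}^{M}H_p\,n^{-p\theta-1/2}+o(n^{-M\theta-1/2})$ in exactly the stated form, with leading coefficient $H_0=\gamma_0/e_0$.

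Verifying the explicit leading values is a short cancellation. In case (1), $H_0^{<}=\gamma_0^{<}/e_0^{<}=\sqrt{2/\pi}$; in case (2), $H_0^{c}=\gamma_0^{c}/e_0^{c}=(1/\pi)/(1/\sqrt{2\pi})=\sqrt{2/\pi}$, the common factor $(3/4)^{1/4}\Gamma(1/4)$ dropping out; and in cases (3), (4) the factors $(2d\beta)^{-1/2}\bigl(\tfrac{1}{2d\beta}-\tfrac{1}{\cosh^2 t_*}\bigr)^{-1/2}$ cancel, leaving $H_0^{>}=H_0^{h\ne 0}=\sqrt{2/\pi}\cosh t_*$. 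The one piece requiring genuine bookkeeping is the second critical coefficient: from the quotient expansion one reads off $H_1^{c}=\gamma_1^{c}/e_0^{c}-\gamma_0^{c}e_1^{c}/(e_0^{c})^2$, and inserting the known values $e_1^{c}=\frac{1}{5\sqrt{\pi}}3^{3/4}\Gamma(3/4)$ and $\gamma_1^{c}=\frac{7\cdot 3^{3/4}\Gamma(3/4)}{5\cdot 2^{1/2}\pi}$, together with the reflection identity $\Gamma(1/4)\Gamma(3/4)=\pi\sqrt{2}$, collapses the two terms $\frac{14\cdot 3^{1/2}\pi^{1/2}}{5(\Gamma(1/4))^2}$ and $\frac{4\cdot 3^{1/2}\pi^{1/2}}{5(\Gamma(1/4))^2}$ to the claimed $H_1^{c}=\frac{2\cdot 3^{1/2}\pi^{1/2}}{(\Gamma(1/4))^2}$.

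The main obstacle I anticipate is arithmetical rather than conceptual: correctly tracking the half-integer versus integer power grid in the critical case, and carrying out the cancellation of the $(3/4)^{1/4}$, $3^{3/4}$, and Gamma-function factors so as to land cleanly on $H_1^{c}$. The division of asymptotic series itself is routine once the denominators' leading coefficients are confirmed positive, and the exponential prefactors cancel automatically, which is why $Q_n$ exhibits no exponential growth or decay and the phase transition is visible only through the constant $\cosh t_*$ and, at criticality, through the second-order term.
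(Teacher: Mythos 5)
Your proposal is correct and follows exactly the paper's own route: the paper likewise obtains the corollary by writing $Q_n=O_{d,\beta,h}/Z_{d,\beta,h}$ via \eqref{eq: Qn = O/Z} and dividing the expansion of Proposition~\ref{prop: O asymptotics even} by that of Theorem~\ref{thm: normalizing constant expansion}, with the exponential prefactors cancelling in each regime. Your coefficient checks, including the computation $H_1^{c}=\gamma_1^{c}/e_0^{c}-\gamma_0^{c}e_1^{c}/(e_0^{c})^2=\frac{2\cdot 3^{1/2}\pi^{1/2}}{(\Gamma(1/4))^2}$ via $\Gamma(1/4)\Gamma(3/4)=\pi\sqrt{2}$, are accurate and in fact supply details the paper leaves implicit.
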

From Corollary~\ref{cor: Q asymptotics even}, we know that Theorem \ref{thm: Littlewood-Offord problem asymptotics} holds when $n$ is even.

\subsubsection{For odd \texorpdfstring{$n$}{n}}\label{subsubsection: odd n}
Let $n$ be an odd integer. Then, $n-1$ is an even integer. Let
\begin{equation*}\label{eq: defn Pn odd}
 P_n=P\left(\sum_{i=1}^{(n-1)/2}\varepsilon_i-\sum_{i=(n+1)/2}^{n}\varepsilon_i=1\right).
 \end{equation*}
 Then by \eqref{eq: odd n upper lower bounds on Qn}, we get
 \begin{equation}\label{eq: Pn and Qn odd}
     P_n\leq Q_n\leq Q_{n-1}.
 \end{equation}
 Because we have get the asymptotic expansion of $Q_{n-1}$ in Corollary~\ref{cor: Q asymptotics even}, it suffices to study the asymptotic expansion of $P_n$ in the following.

We write
\begin{equation}\label{eq: defn Pn}
P_n=\frac{O^{odd}_{d,\beta,h}}{Z_{d,\beta,h}},
\end{equation}
where
\begin{equation}\label{eq: odd O d beta h}
O^{odd}_{d,\beta,h} = \sum_{\sigma\in\{-1,1\}^n:\sum_{i=1}^{(n-1)/2}\sigma_i-\sum_{i=(n+1)/2}^{n}\sigma_i=1} \exp\left\{\frac{d\beta}{n}(\sum_{j=1}^{n}\sigma_j)^2+h\sum_{j=1}^{n}\sigma_j\right\}.
\end{equation}
Because the asymptotic expansion of $Z_{d,\beta,h}$ is given in Section~\ref{sect: asymptotic expansion of Z as a series in n}, it remains to calculate the asymptotic expansion of $O^{odd}_{d,\beta,h}$. The strategy is similar to the calculation of the expansion of $O_{d,\beta,h}$. So we will not provide the full detail. Instead, we point out the key argument.

\begin{lemma}\label{lem: expectation representation of O plus d beta h}
Let $n$ be a positive odd integer. Then, we have that
\begin{equation*}
   O^{odd}_{d,\beta,h}=2^{\frac{n-1}{2}}E\left[\begin{array}{l}
   \left(\cosh\left(2\sqrt{\frac{2d\beta}{n}}Y+2h\right)+\cos U\right)^{\frac{n-1}{2}}\\
   \times\left(e^{-(\sqrt{\frac{2d\beta}{n}}Y+h)}+e^{\sqrt{\frac{2d\beta}{n}}Y+h}\cos U\right)
   \end{array}\right]
\end{equation*}
where $Y$ is a standard Gaussian random variable, $U$ follows the uniform distribution on $(-\pi,\pi)$,  $Y$ and $U$ are independent.
\end{lemma}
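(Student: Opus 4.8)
The plan is to mirror the proof of Lemma~\ref{lem: expectation representation of O d beta h}, the only structural change being that the target value of the signed sum is now $1$ instead of $0$ and that the two blocks have unequal sizes $(n-1)/2$ and $(n+1)/2$. Writing $T_n^{\mathrm{odd}}=\sum_{i=1}^{(n-1)/2}\varepsilon_i-\sum_{i=(n+1)/2}^{n}\varepsilon_i$, so that $O^{odd}_{d,\beta,h}=Z_{d,\beta,h}\,P(T_n^{\mathrm{odd}}=1)$ by \eqref{eq: defn Pn} and \eqref{eq: odd O d beta h}, I would first invoke the Gaussian coupling \eqref{eq: varepsilon conditional disitribution with Y} to obtain
\[
O^{odd}_{d,\beta,h}=\int_{\mathbb{R}}P(T_n^{\mathrm{odd}}=1\mid Y=y)\,2^n\cosh^n\!\Big(\sqrt{\tfrac{2d\beta}{n}}y+h\Big)\tfrac{1}{\sqrt{2\pi}}e^{-y^2/2}\,\mathrm{d}y,
\]
exactly as in \eqref{eq: O d beta h calculate01}. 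Abbreviate $a=\sqrt{2d\beta/n}\,y+h$ and let $p$ be the conditional Bernoulli parameter of \eqref{eq: varepsilon conditional distribution}, so that $p=e^{a}/(2\cosh a)$ and $1-p=e^{-a}/(2\cosh a)$.

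The key computation is the conditional characteristic function. Each $+$-spin contributes $A:=pe^{it}+(1-p)e^{-it}$ and each $-$-spin contributes $B:=pe^{-it}+(1-p)e^{it}$ to $E(e^{itT_n^{\mathrm{odd}}}\mid Y=y)$, and one checks $AB=(\cosh 2a+\cos 2t)/(2\cosh^2 a)$, recovering the identity used in the even case. Since now there are $(n-1)/2$ $+$-spins and $(n+1)/2$ $-$-spins, I would factor
\[
\varphi_{T_n^{\mathrm{odd}}\mid Y}(t\mid y)=A^{(n-1)/2}B^{(n+1)/2}=(AB)^{(n-1)/2}B=\Big(\tfrac{\cosh 2a+\cos 2t}{2\cosh^2 a}\Big)^{(n-1)/2}B,
\]
so the only new feature compared with the derivation in Lemma~\ref{lem: expectation representation of O d beta h} is the leftover single factor $B$ coming from the unpaired $-$-spin.

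Since $T_n^{\mathrm{odd}}$ is integer-valued, Fourier inversion gives $P(T_n^{\mathrm{odd}}=1\mid Y=y)=\tfrac{1}{2\pi}\int_{-\pi}^{\pi}\varphi_{T_n^{\mathrm{odd}}\mid Y}(t\mid y)e^{-it}\,\mathrm{d}t$; the novelty is the extra weight $e^{-it}$ selecting the value $1$. Using $Be^{-it}=pe^{-2it}+(1-p)$, substituting $u=2t$ and exploiting $2\pi$-periodicity to fold $(-2\pi,2\pi)$ onto $(-\pi,\pi)$ (just as in the passage using $u=2t$ in the computation of $P(T_n=0\mid Y=y)$), and discarding the odd imaginary part $-ip\sin u$ against the even factor $(\cosh 2a+\cos u)^{(n-1)/2}$, I would arrive at
\[
P(T_n^{\mathrm{odd}}=1\mid Y=y)=\frac{1}{2^{(n-1)/2}\cosh^{n-1}a}\,\frac{1}{2\pi}\int_{-\pi}^{\pi}(\cosh 2a+\cos u)^{(n-1)/2}\big(p\cos u+(1-p)\big)\,\mathrm{d}u.
\]
Finally I would rewrite $p\cos u+(1-p)=(e^{a}\cos u+e^{-a})/(2\cosh a)$, which produces both the factor $e^{-a}+e^{a}\cos u$ and an extra $1/(2\cosh a)$; inserting this into the $y$-integral, the powers of $\cosh a$ cancel against $2^n\cosh^n a$ and the numerical prefactor becomes $2^{n}/2^{(n+1)/2}=2^{(n-1)/2}$, yielding the claimed expectation over the independent pair $(Y,U)$.

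The main obstacle will be bookkeeping rather than conceptual: one must correctly carry the single unpaired factor $B$, handle the $e^{-it}$ weight required for the odd target value, and---most error-prone---track the powers of $2$ and of $\cosh a$ through the substitution $u=2t$ and the conversion $p\cos u+(1-p)=(e^{a}\cos u+e^{-a})/(2\cosh a)$, so that the prefactor collapses precisely to $2^{(n-1)/2}$ rather than $2^{(n+1)/2}$.
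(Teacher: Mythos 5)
Your proposal is correct and follows essentially the same route as the paper's proof: Gaussian (Hubbard) coupling to reduce to the conditional law, the factorization $\varphi_{T_n\mid Y}=(AB)^{(n-1)/2}B$ with $AB=(\cosh 2a+\cos 2t)/(2\cosh^2 a)$, Fourier inversion with the $e^{-it}$ weight, the substitution $u=2t$ with periodicity folding, and the same prefactor bookkeeping yielding $2^{(n-1)/2}$. The only cosmetic difference is that you keep the unpaired factor as $Be^{-it}=pe^{-2it}+(1-p)$ while the paper writes $B=\cos t-i\tanh a\,\sin t$ and takes real parts; these are the same computation.
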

\begin{proof}
Let $Y$ be a standard Gaussian random variable. Let $T_n=\sum^{\frac{n-1}{2}}_{i=1}\varepsilon_i-\sum^n_{i=\frac{n+1}{2}}\varepsilon_i$. Hence, by \eqref{eq: varepsilon conditional disitribution with Y} and \eqref{eq: odd O d beta h}, we have that
\begin{align}\label{eq: calculate odd O d beta h}
O^{odd}_{d,\beta,h}&=Z_{d,\beta,h}P(T_n=1)\notag\\
&=\int_{\mathbb{R}}P(T_n=1|Y=y)2^n\cosh^n\left(\sqrt{\frac{2d\beta}{n}}y+h\right)\frac{1}{\sqrt{2\pi}}e^{-\frac{y^2}{2}}\,\mathrm{d}y.
\end{align}
Thus by \eqref{eq: varepsilon conditional distribution}, we get that
\begin{align*}
 \varphi_{T_n|Y}(t|y)&:=E(e^{itT_n}|Y=y)\notag\\
 &=|E(e^{it\varepsilon_1}|Y=y)|^{n-1}E(e^{-it\varepsilon_1}|Y=y)\notag\\
 &=\frac{1}{2^{\frac{n-1}{2}}\left(\cosh\left(\sqrt{\frac{2d\beta}{n}}y+h\right)\right)^{n-1}}\notag\\
 &\qquad \times\left(\cosh\left(2\left(\sqrt{\frac{2d\beta}{n}}y+h\right)\right)+\cos(2t)\right)^{\frac{n-1}{2}}\notag\\
 &\qquad\times\left(\cos t-i\sin t\tanh\left(\sqrt{\frac{2d\beta}{n}}y+h\right)\right).
\end{align*}
As $T_n$ is integer-valued, we have that
\begin{align}\label{eq: odd T_n = 1 conditional prob} P(T_n=1|Y=y)\notag&=\frac{1}{2\pi}\int_{-\pi}^{\pi}e^{-it}\varphi_{T_n|Y}(t|y)(t)\,\mathrm{d}t\notag\\
&=\frac{1}{2\pi}\int_{-\pi}^{\pi}\mathrm{Re}(e^{-it}\varphi_{T_n|Y}(t|y))\,\mathrm{d}t\notag\\
&=\frac{1}{2\pi\cdot 2^{\frac{n+1}{2}}\left(\cosh\left(\sqrt{\frac{2d\beta}{n}}y+h\right)\right)^{n}}\notag\\
&\quad\times \int^{\pi}_{-\pi}\left(\cosh\left(2\left(\sqrt{\frac{2d\beta}{n}}y+h\right)\right)+\cos(2t)\right)^{\frac{n-1}{2}}\notag\\
&\quad\quad\quad\quad\left(e^{-(\sqrt{\frac{2d\beta}{n}}y+h)}+e^{\sqrt{\frac{2d\beta}{n}}y+h}\cos(2t)\right)\,\mathrm{d}t.
\end{align}
Hence, by \eqref{eq: calculate odd O d beta h} and \eqref{eq: odd T_n = 1 conditional prob}, we get that
\begin{align*}
   O^{odd}_{d,\beta,h}&=\int_{\mathbb{R}}\int^{\pi}_{-\pi}\left(\cosh\left(2\left(\sqrt{\frac{2d\beta}{n}}y+h\right)\right)+\cos(2t)\right)^{\frac{n-1}{2}}\notag\\
   &\qquad\times\left(e^{-(\sqrt{\frac{2d\beta}{n}}y+h)}+e^{\sqrt{\frac{2d\beta}{n}}y+h}\cos(2t)\right) \frac{2^{(n-1)/2}}{(2\pi)^{3/2}}e^{-\frac{y^2}{2}}\,\mathrm{d}y\mathrm{d}t\\
&\xlongequal{2t=u}\int_{\mathbb{R}}\int^{2\pi}_{-2\pi}\left(\cosh\left(2\left(\sqrt{\frac{2d\beta}{n}}y+h\right)\right)+\cos u\right)^{\frac{n-1}{2}}\notag\\
   &\qquad\times\left(e^{-(\sqrt{\frac{2d\beta}{n}}y+h)}+e^{\sqrt{\frac{2d\beta}{n}}y+h}\cos u\right)\frac{2^{(n-3)/2}}{(2\pi)^{3/2}}e^{-\frac{y^2}{2}}\,\mathrm{d}y\mathrm{d}u\\
   &=2^{(n-1)/2}\int_{\mathbb{R}}\int^{\pi}_{-\pi}\left(\cosh\left(2\left(\sqrt{\frac{2d\beta}{n}}y+h\right)\right)+\cos u\right)^{\frac{n-1}{2}}\notag\\
   &\qquad\times\left(e^{-(\sqrt{\frac{2d\beta}{n}}y+h)}+e^{\sqrt{\frac{2d\beta}{n}}y+h}\cos u\right)\frac{1}{(2\pi)^{3/2}}e^{-\frac{y^2}{2}}\,\mathrm{d}y\mathrm{d}u\\
   &=2^{\frac{n-1}{2}}E\left[
   \left(\cosh\left(2\left(\sqrt{\frac{2d\beta}{n}}Y+h\right)\right)+\cos U\right)^{\frac{n-1}{2}}\right.\\
   &\quad \times \left.\left(e^{-(\sqrt{\frac{2d\beta}{n}}Y+h)}+e^{\sqrt{\frac{2d\beta}{n}}Y+h}\cos U\right)
  \right],
\end{align*}
where $U$ follows the uniform distribution on $(-\pi,\pi)$ and it is independent of $Y$.
\end{proof}

  \begin{proposition}\label{prop: Pn expansion odd}
 Let $n$ be an odd integer. As $n\to+\infty$, the asymptotic expansion of $P_n$ is as follows:
\begin{enumerate}[(1)]
  \item For $h=0$ and $\beta<\beta_c$, we have that
  \begin{equation*}
  P_n=\sqrt{\frac{2}{\pi}}n^{-1/2}+O(n^{-3/2}).
  \end{equation*}
   \item For $h=0$ and $\beta=\beta_c$, we have that
  \begin{equation*}
  P_n=\sqrt{\frac{2}{\pi}}n^{-\frac{1}{2}}+\frac{2\cdot 3^{\frac{1}{2}}\cdot\pi^{\frac{1}{2}}}{(\Gamma(1/4))^2}n^{-1}+o(n^{-1}).
  \end{equation*}
  \item For $h=0$ and $\beta>\beta_c$, we have that
  \begin{equation*}
   P_n=\sqrt{\frac{2}{\pi}}\cosh(t_*)n^{-1/2}+O(n^{-3/2}),
  \end{equation*}
  where  $z_{*}=\frac{t_*}{2d\beta}$ is the maximal solution of the mean-field equation $\tanh\left(\frac{\beta}{\beta_c} z\right)=z.$
  \item For $h\neq 0$, we have that
   \begin{equation*}
   P_n=\sqrt{\frac{2}{\pi}}\cosh(t_*)n^{-1/2}+O(n^{-3/2}),
  \end{equation*}
  where $z_{*}=(t_{*}-|h|)/(2d\beta)$ is the maximal solution of the mean-field equation \[\tanh\left(\frac{\beta}{\beta_c} z+|h|\right)=z.\]
\end{enumerate}
 \end{proposition}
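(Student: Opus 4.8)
The plan is to mirror the even-case analysis of Proposition~\ref{prop: O asymptotics even}, treating the extra bracket in Lemma~\ref{lem: expectation representation of O plus d beta h} as a bounded amplitude sitting in front of the \emph{same} two-dimensional Laplace integral. Writing $O^{odd}_{d,\beta,h}=2^{n-1}W^{odd}(1/\sqrt{n})$ and performing the substitution $t=\sqrt{2d\beta}\,yx+h$ exactly as in the even case, I would express
\[
W^{odd}(x)=\frac{1}{(2\pi)^{3/2}\sqrt{2d\beta}\,x}\int_{-\infty}^{+\infty}\int_{-\pi}^{\pi}e^{\psi(t,u)/x^2}\,G(t,u)\,\mathrm{d}t\,\mathrm{d}u,
\]
where $\psi(t,u)$ is the very same phase as in \eqref{eq: defn psi} and
\[
G(t,u)=\left(\tfrac12\cosh 2t+\tfrac12\cos u\right)^{-1/2}\left(e^{-t}+e^{t}\cos u\right)
\]
is the new amplitude (the factor $(\tfrac12\cosh2t+\tfrac12\cos u)^{-1/2}$ accounts for the exponent $\tfrac{n-1}{2}$ in place of $\tfrac n2$). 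Because $\psi$ is unchanged, the maximizer(s) $(t_*,0)$ from Lemma~\ref{lem: the maximum point of varphi}, the truncation of the integral near the maximizer, and the tail estimates \eqref{eq: upper bounds for tilde R c}, \eqref{eq: upper bounds for tilde R S}, \eqref{eq: upper bounds for tilde R >} carry over verbatim; the only genuinely new input is the Taylor expansion of the smooth, bounded factor $G$ at the maximizer.

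The decisive elementary computation is the value of the amplitude at the maximizer. Using $\tfrac12\cosh(2t_*)+\tfrac12=\cosh^2 t_*$, one gets
\[
G(\pm t_*,0)=(\cosh^2 t_*)^{-1/2}\left(e^{-t_*}+e^{t_*}\right)=\frac{2\cosh t_*}{\cosh t_*}=2,
\]
the same value at both maximizers, so the low-temperature case with its two maxima is handled uniformly (here I keep the full line $\int_{-\infty}^{+\infty}$ rather than folding onto $t\ge0$, since $G$ is not even in $t$). Combined with the prefactor $2^{n-1}$ (against $2^n$ in the even case), this factor $2$ is precisely what makes $O^{odd}_{d,\beta,h}\sim O_{d,\beta,h}$ at leading order, as the sandwich \eqref{eq: Pn and Qn odd} requires.

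In the three non-critical cases I would rescale $t=t_*+sx$, $u=vx$; the leading Gaussian phase $\alpha_{2,0}s^2+\alpha_{0,2}v^2$ is even in $s$ and in $v$, while $\psi$ is even in $u$ (so $\alpha_{p,q}=0$ for odd $q$) and $G$ is even in $u$ as well. The order-$x$ terms arising from the cubic phase corrections and from $\partial_t G$ are all odd in $s$, hence integrate to zero; this is the same parity mechanism as in the even case, so the expansion again proceeds in odd powers of $x$, giving $W^{odd}(x)=e^{\varphi(t_*)/x^2}\big(G(t_*,0)\gamma_0^{\S}x+O(x^3)\big)$. Dividing by the expansion of $Z_{d,\beta,h}$ from Theorem~\ref{thm: normalizing constant expansion}, the constants $(2d\beta)^{-1/2}(\tfrac1{2d\beta}-\cosh^{-2}t_*)^{-1/2}$ in $\gamma_0^{\S}$ and in the leading coefficient of $Z$ cancel, and the residual factor $G(t_*,0)=2$ against $2^{n-1}/2^n$ yields $P_n=\sqrt{2/\pi}\cosh(t_*)\,n^{-1/2}+O(n^{-3/2})$, which is cases (1), (3) and (4).

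The main obstacle is the critical case $h=0$, $\beta=\beta_c$, where the Hessian of $\psi$ is degenerate. Here I would use the critical rescaling $t=s\sqrt{x}$, $u=vx$ and expand $W^{odd}$ in half-integer powers of $x$, exactly as in \eqref{eq: critical expansion of tilde W via integral}. The leading term again carries $G(0,0)=2$ and, after division by $Z$ using Theorem~\ref{thm: normalizing constant expansion}(2), reproduces the first term $\sqrt{2/\pi}\,n^{-1/2}$. To obtain the $n^{-1}$ coefficient I must compute the subleading half-power coefficient of $W^{odd}$, which mixes the Taylor coefficients $\alpha_{2,2},\alpha_{6,0}$ of $\psi$ (as in \eqref{eq: psi taylor expansion critical}) with the second-order Taylor coefficients of $G$ at $(0,0)$, evaluate the resulting moments against the weight $e^{-s^4/12}$ as in \eqref{eq: truncated e-s4s2pv2q integral}, and then combine with the two-term expansion of $Z_{d,\beta,0}$. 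The delicate points are checking that the odd-in-$t$ pieces of $G$ cancel by parity against the even phase and assembling the $n^{-1}$ term correctly; carrying this through should produce the stated constant $\tfrac{2\cdot 3^{1/2}\pi^{1/2}}{(\Gamma(1/4))^2}$, completing case (2).
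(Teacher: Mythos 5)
Your setup is the paper's setup in disguise: the representation is Lemma~\ref{lem: expectation representation of O plus d beta h}, the phase is the same $\psi$ of \eqref{eq: defn psi}, and your amplitude $G(t,u)$ is exactly $2\rho(t,u)$, where $\rho(t,u)=(e^{-t}+e^{t}\cos u)/\sqrt{2\cosh 2t+2\cos u}$ is the amplitude the paper works with (your prefactor $2^{n-1}$ against the paper's $2^{n}$ absorbs the factor $2$). Cases (1), (3), (4) as you argue them are correct and essentially identical to the paper's treatment: all relative order-$x$ corrections --- the cubic phase terms $\alpha_{3,0}s^{3}+\alpha_{1,2}sv^{2}$ and the first-order amplitude terms --- are odd in $s$ or in $v$, hence integrate to zero against the even Gaussian, giving relative error $O(x^{2})$ and therefore the $O(n^{-3/2})$ remainder; the paper reaches the same conclusion by noting $\rho(t,u)=1+O(|t\mp t_{*}|^{2}+|u|^{2})$ near each maximizer, and it also treats the two maximizers $\pm t_{*}$ separately in the low temperature case, just as you do.

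The weak point is case (2), in two respects. First, your plan stops short of producing the constant (``carrying this through should produce\ldots''), and it misplaces the delicate point: parity does kill the odd-in-$s$ pieces of $G$, but the term that could genuinely change the $n^{-1}$ constant is the \emph{even} one, $\tfrac12\partial_{t}^{2}G(0,0)\,t^{2}=\tfrac12\partial_{t}^{2}G(0,0)\,s^{2}x$ under the critical scaling, which sits precisely at the relative order $x$ that determines the $n^{-1}$ coefficient and is not removed by any symmetry. It vanishes only because $G(t,0)\equiv 2$ identically in $t$ (equivalently $\rho(t,0)\equiv 1$), i.e.\ the amplitude is constant along the entire ray $u=0$; you verified the value $2$ only at the maximizer, which is not enough. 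Second, the coefficients you do propose to compute, e.g.\ $\partial_{u}^{2}G(0,0)$, never enter at order $n^{-1}$: since $G(t,0)\equiv 2$ and $G$ is even in $u$, every amplitude correction has the form $t^{p}u^{2q}$ with $q\geq 1$, hence is $O(x^{2})$ under $t=s\sqrt{x}$, $u=vx$. Once these two facts are recorded, no new moment computation is needed at all: one gets $W^{odd}(x)=2W(x)\bigl(1+O(x^{3/2})\bigr)$ in your normalization, so after dividing by $Z_{d,\beta,0}$ the quantity $P_{n}$ inherits \emph{both} terms of the even-$n$ expansion of Corollary~\ref{cor: Q asymptotics even}, the multiplicative error contributing only $O(n^{-5/4})=o(n^{-1})$. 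This is exactly how the paper closes case (2), and it is what your proposal should adopt in place of the planned recomputation of the subleading coefficient (which, done correctly, would reproduce the same constant, but duplicates the even-case work and leaves the stated constant underived as written).
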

\begin{proof}
The proof is very similar to that of Proposition~\ref{prop: O asymptotics even}, so we only point out the key argument.

For $x\neq 0$, we define
\begin{equation*}
W^{odd}(x)=E\left[\begin{array}{l}
\exp\{\frac{1}{2}\ln\left(\frac{1}{2}\cosh\left(2\sqrt{2d\beta}Yx+2h\right)+\frac{1}{2}\cos U\right)/x^2\}\\
\times\left(e^{-\sqrt{2d\beta}Yx-h}+e^{\sqrt{2d\beta}Yx+h}\cos U\right)\\
\times\left(2\cosh\left(2\sqrt{2d\beta}Yx+2h\right)+2\cos U\right)^{-\frac{1}{2}}
\end{array}\right].
\end{equation*}
Then, we have that
\begin{equation}\label{eq: O odd via W odd}
O_{d,\beta,h}^{odd}=2^nW^{odd}(1/\sqrt{n}).
\end{equation}
For $x\neq 0$, we have that
\begin{align}\label{eq: odd W}
W^{odd}(x)
&=\frac{1}{(2\pi)^{\frac{3}{2}}}\int_{-\infty}^{+\infty}\int_{-\pi}^{\pi}e^{\frac{1}{2}\ln(\frac{1}{2}\cosh(2\sqrt{2d\beta}yx+2h)+\frac{1}{2}\cos u)/x^2}e^{-y^2/2}\notag\\
&\quad \times \frac{e^{-\sqrt{2d\beta}yx-h}+e^{\sqrt{2d\beta}yx+h}\cos u}{\sqrt{2\cosh(2\sqrt{2d\beta}yx+h)+2\cos u}}\,\mathrm{d}y\mathrm{d}u\notag\\
&\xlongequal{t=\sqrt{2d\beta}yx+h}\frac{1}{(2\pi)^{\frac{3}{2}}\sqrt{2d\beta}|x|} \int_{-\infty}^{+\infty}\int_{-\pi}^{\pi}e^{\psi(t,u)/x^2}\rho(t,u)\,\mathrm{d}t\mathrm{d}u,
\end{align}
where $\psi(t,u)$ is defined by \eqref{eq: defn psi} and
\begin{equation*}
\rho(t,u)=\frac{e^{-t}+e^{t}\cos u}{\sqrt{2\cosh(2t)+2\cos u}}.
\end{equation*}
Then, we could find the asymptotic expansion of $W^{odd}(x)$ via Laplace's method. We  need to find the asymptotic expansion of the integrand $e^{\psi(t,u)/x^2}\rho(t,u)$. Compared with that of $W(x)$, the only difference is that we need to use the Taylor's expansion of $\rho(t,u)$ near the maximum point of $\psi(t,u)$.

Note that $\rho(t,u)\leq\rho(t,0)=1$.

For the critical case $h=0$ and $\beta=\beta_c$, the maximum point of $\psi(t,u)$ is $(0,0)$. It is also the maximum point of $\rho(t,u)$. Note that $\rho(t,u)$ is a even function in $u$ and
\[\rho(0,0)=1,\,\, \frac{\partial\rho}{\partial t}(0,0)=\frac{\partial^2\rho}{\partial t^2}(0,0)=\frac{\partial\rho}{\partial u}(0,0)=\frac{\partial^2\rho}{\partial t\partial u}(0,0)=0, \,\,\frac{\partial^2\rho}{\partial u^2}(0,0)\neq 0.\]
Hence, the Taylor expansion of $\rho(0,0)$ at point $(0,0)$ is
\begin{equation*}
    \rho(t,u)=1+\sum_{\substack{p+4q\geq 3, \\ p\geq 0, q\geq 0}}\widetilde{\alpha}_{p,2q}t^{p}u^{2q}.
\end{equation*}
By $t=s\sqrt{x}=O(x^{\frac{1}{2}})$ and $u=vx=O(x)$, we get
\[\rho(t,u)=1+O(|x|^{\frac{3}{2}}), \,\,\text{as}\,\,x\to 0^+.\]
Hence, we have that
\[W^{odd}(x)=W(x)(1+O(|x|^{\frac{3}{2}})), \,\,\text{as}\,\,x\to 0^+.\]
Hence, by \eqref{eq: O even via W} and \eqref{eq: O odd via W odd}, we get
\[O^{odd}_{d,\beta,h}=O_{d,\beta,h}(1+O(n^{-\frac{3}{4}})), \,\,\text{as}\,\, n\to+\infty.\]
Hence, by \eqref{eq: Qn = O/Z}, \eqref{eq: defn Pn} and Corollary~\ref{cor: Q asymptotics even}, we know that Proposition~\ref{prop: Pn expansion odd}(2) holds.

For the case $h=0$, $\beta<\beta_c$ or the case $h\neq 0$, $(t_*,0)$ is the only maximum point of $\psi(t,u)$. It is also the maximum point of $\rho(t,u)$. Near the point $(t_*,0)$, we have
\begin{equation*}\label{eq: rho taylor two cases}
    \rho(t,u)=\rho(t_*,0)+O(|t-t_{*}|^2+|u|^2)=1+O(|t-t_{*}|^2+|u|^2).
\end{equation*}
Intuitively, the main contribution of the integral in \eqref{eq: odd W} comes from the part when $t-t_{*}=sx=O(x)$ and $u=vx=O(x)$. Hence, we have $\rho(t,u)=1+O(x^2)$, as $x\to 0^+$. Therefore, we have that
\[W^{odd}(x)=W(x)(1+O(x^2)),\text{ as }x\to 0^+.\]
Hence, again by \eqref{eq: Qn = O/Z}, \eqref{eq: O even via W}, \eqref{eq: defn Pn}, \eqref{eq: O odd via W odd} and Corollary~\ref{cor: Q asymptotics even}, we know that Proposition~\ref{prop: Pn expansion odd}(1)(4) hold.

For the case $h=0$ and $\beta>\beta_c$, $\psi(t,u)$ has two different maximum points $(t_{*},0)$ and $(-t_*,0)$ where $t_*>0$. Near $(t_{*},0)$, we have
\[\rho(t,u)=1+O(|t-t_{*}|^2+|u|^2).\]
And near $(-t_{*},0)$, we have
\[\rho(t,u)=1+O(|t+t_{*}|^2+|u|^2).\]
Hence, by $t=\pm t_{*}+sx$ and $u=vx$, then
$\rho(t,u)=1+O(x^2)$, as $x\to 0^+$. Hence, we have that
\[W^{odd}(x)=W(x)(1+O(x^2)),\text{ as }x\to 0^+.\]
Hence, again by \eqref{eq: Qn = O/Z}, \eqref{eq: O even via W}, \eqref{eq: defn Pn}, \eqref{eq: O odd via W odd} and Corollary~\ref{cor: Q asymptotics even}, we know that Proposition~\ref{prop: Pn expansion odd} (3) holds.

And the above argument could be made rigorous.
\end{proof}

Hence, by \eqref{eq: Pn and Qn odd}, Corollary~\ref{cor: Q asymptotics even} and Proposition~\ref{prop: Pn expansion odd}, we prove Theorem~\ref{thm: Littlewood-Offord problem asymptotics} when $n$ is odd.

Hence, by Subsubsection~\ref{subsubsect: even n} and Subsubsection~\ref{subsubsection: odd n} we complete the proof of Theorem~\ref{thm: Littlewood-Offord problem asymptotics}.

\section{Acknowledgements}

The authors would like to thank Longmin Wang and Qiang Zeng for their discussions about the Curie-Weiss models. The first author was supported by National Natural Science Foundation of China \#11701395. The second author was supported by National Natural Science Foundation of China \#12001389.

\bibliographystyle{amsalpha}

\providecommand{\bysame}{\leavevmode\hbox to3em{\hrulefill}\thinspace}
\providecommand{\MR}{\relax\ifhmode\unskip\space\fi MR }
\providecommand{\MRhref}[2]{%
  \href{http://www.ams.org/mathscinet-getitem?mr=#1}{#2}
}
\providecommand{\href}[2]{#2}

\end{document}